\date{}
\newtheorem{statement}{}[section]
\newtheorem{theorem}[statement]{Theorem}
\newtheorem{lemma}[statement]{Lemma}
\newtheorem{proposition}[statement]{Proposition}
\newtheorem{definition}[statement]{Definition}
\newtheorem{corollary}[statement]{Corollary}
\newcommand\C{\mathbb C}
\newcommand\R{\mathbb R}
\newcommand\T{\mathbb T}
\newcommand\D{\mathbb D}
\newcommand\e{{\rm e}}
\newcommand\eps{\varepsilon}
\newcommand\ind{1 \kern - 0.28 em {\rm I}}
\newcommand\dis{\displaystyle}
\renewcommand \Re{{\mathfrak R}{\rm e}\,}
\renewcommand \Im{{\mathfrak I}{\rm m}\,}
\newcommand\converge{\mathop{\longrightarrow}\limits}
\let\phi=\varphi
\let\tilde=\widetilde
\newcommand\tq{\, ; \ }
\title{\bf Compactification and decompactification by weights on Bergman spaces}
\author{\it Pascal~Lef\`evre, Daniel~Li,  \\ \it Herv\'e~Queff\'elec, Luis~Rodr{\'\i}guez-Piazza}
\date{\footnotesize \today}
\begin{document}

\maketitle

\noindent {\bf Abstract.} We characterize the symbols $\phi$ for which there exists a weight $w$ such that the weighted composition operator 
$M_w C_\phi$ is compact on the weighted Bergman space ${\mathfrak B}^2_\alpha$. We also characterize the symbols for which there exists a 
weight $w$ such that $M_w C_\phi$ is bounded but not compact. We also investigate when there exists $w$ such that $M_w C_\phi$ is Hilbert-Schmidt on 
${\mathfrak B}^2_\alpha$. 
\medskip

\noindent {\bf MSC 2010} primary: 47B33 ; secondary: 30H20
\smallskip

\noindent {\bf Key-words} Bergman space -- compactification -- composition operator -- decompactification -- Hilbert-Schmidt operator -- 
weighted Bergman space -- weighted composition operator

%%%%%%%%%%%%%%%%%%%%%%%%%%%%%%%%%%%%%%%%%%%%%%%%%%%%%%%%%%%%%%%%%%%%%%%%%%%
\section {Introduction} 

It is known (see \cite{GKP} for instance) that ``weightening'' a composition operator $C_\phi$ on the Hardy space $H^2$ by some weight $w$, we can improve 
its compactness properties, and even its membership in Schatten classes $S_p$, or the decay of its approximation numbers (\cite[Theorem~2.3]{GDHL}, 
\cite{LLQR-compactification}), or at the opposite make a compact composition operator non compact (\cite{LLQR-compactification}). 

In this paper, we consider weighted composition operators $M_w C_\phi$ on the weighted Bergman spaces ${\mathfrak B}^2_\alpha$, with $\alpha > - 1$. 
Note that for such an operator to be bounded from ${\mathfrak B}^2_\alpha$ into itself, it is necessary that $w \in {\mathfrak B}^2_\alpha$ (since 
$w = (M_w C_\phi) (\ind)$). 

We show in Section~\ref{section compactif} that $C_\phi$ can be weighted to become compact on ${\mathfrak B}^2_\alpha$ if and only if the set where 
$\phi$ has an angular derivative has null measure. 

In Section~\ref{section decompact}, we show that the exists a weight $w$ such that $M_w C_\phi$ is bounded but not compact on 
${\mathfrak B}^2_\alpha$ if and only if $\| \phi \|_\infty = 1$.

In Section~\ref{section HS}, we study when $M_w C_\phi$ can be Hilbert-Schmidt on ${\mathfrak B}^2_\alpha$ for some weight $w$.

%%%%%%%%%%%%%%%%%%%%%%%%%%%%%%%%%%%%%%%%%%%%%%%%%%%%%%%%%%%%%%%%%%%%%%%%%%%
\section {Notation and background}

The weighted Bergman space ${\mathfrak B}^2_\alpha$, with $\alpha > - 1$, is the space of all analytic functions $f \colon \D \to \C$ on the unit disk $\D$ 
such that
\begin{displaymath} 
\| f \|_{{\mathfrak B}_\alpha^2}^2 = (\alpha + 1) \int_\D |f (z)|^2 (1 - |z|^2)^\alpha \, dA (z) < \infty \, ,
\end{displaymath} 
where $A$ is the normalized area measure on $\D$. When $\alpha = 0$, we write simply ${\mathfrak B}^2$ instead of ${\mathfrak B}^2_0$ and call it 
the Bergman space.  
\medskip

Every analytic self-map $\phi \colon \D \to \D$ defines a bounded composition operator $C_\phi \colon f \mapsto f \circ \phi$ from 
${\mathfrak B}^2_\alpha$ into itself (\cite[Proposition~3.4]{MacCluer-Shapiro}). 

The pull-back measure $A_\phi$ of $\phi$ is defined as:
\begin{displaymath} 
\qquad \qquad A_\phi (B) = A [\phi^{- 1} (B)] \quad \text{for all Borel sets } B \subseteq \D \, .
\end{displaymath} 

Let $\mu$ be a finite Borel measure on $\D$. For $\beta > 1$, the measure $\mu$ is said a \emph{$\beta$-Carleson measure} if:
\begin{equation} 
\sup_{|\xi| = 1} \mu [S (\xi, h)] = {\rm O}\, (h^\beta) \, ,
\end{equation} 
where
\begin{displaymath} 
S (\xi, h) = \{ z \in \D \tq |z - \xi| < h\} 
\end{displaymath} 
is the Carleson box of size $h$ centered at $\xi \in \T = \partial \D$. The measure $\mu$ is said a \emph{vanishing $\beta$-Carleson measure} if:
\begin{equation} 
\qquad \quad \sup_{|\xi| = 1} \mu [S (\xi, h)] = {\rm o}\, (h^\beta) \quad \text{as } {h \to 0} \, .
\end{equation} 

Recall the following result (see \cite{Hastings} and \cite[Theorem~4.3]{MacCluer-Shapiro}).
\begin{theorem} 
Let $\mu$ be a finite Borel measure on $\D$. Then:
\begin{itemize}
\setlength\itemsep {-0.1 em}

\item [{\rm (a)}] ${\mathfrak B}^2_\alpha \subseteq L^2 (\mu)$ if and only if $\mu$ is an $(\alpha + 2)$-Carleson measure. 

Moreover, when this happens, the canonical inclusion $J_\mu \colon {\mathfrak B}^2_\alpha \to L^2 (\mu)$ is bounded.

\item [{\rm (b)}] The canonical inclusion $J_\mu \colon {\mathfrak B}^2_\alpha \to L^2 (\mu)$ is compact if and only if $\mu$ is a vanishing 
$(\alpha + 2)$-Carleson measure. 
\end{itemize}
\end{theorem}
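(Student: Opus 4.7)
The plan is to use the standard \emph{reproducing kernel + Whitney decomposition} route. The reproducing kernel of ${\mathfrak B}^2_\alpha$ at $a \in \D$ is $K_a(z) = (1 - \bar a z)^{-(\alpha + 2)}$, with $\|K_a\|_{{\mathfrak B}^2_\alpha}^2 \asymp (1 - |a|^2)^{-(\alpha + 2)}$. Setting $\xi = a/|a|$ and $h = 1 - |a|^2$, the elementary bound $|1 - \bar a z| \lesssim h$ on $S(\xi, h)$ gives $|K_a(z)|^2 \gtrsim h^{-2(\alpha+2)}$ there. This is the only input needed for all four ``easy'' directions.

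For the necessary direction of (a), boundedness of $J_\mu$ applied to $K_a$ yields
\begin{displaymath}
h^{-2(\alpha+2)} \mu\bigl[S(\xi,h)\bigr] \lesssim \int_\D |K_a|^2 \, d\mu \leq \|J_\mu\|^2 \|K_a\|^2 \asymp h^{-(\alpha+2)},
\end{displaymath}
so $\mu[S(\xi,h)] = {\rm O}(h^{\alpha+2})$. For the necessary direction of (b), one notes that the normalized kernels $k_a = K_a/\|K_a\|$ tend weakly to $0$ as $|a| \to 1$ (since they converge pointwise to $0$ and are bounded in ${\mathfrak B}^2_\alpha$), hence $\|J_\mu k_a\|_{L^2(\mu)} \to 0$ by compactness, which gives the vanishing condition for small $h$; the regime $h \geq \delta$ is automatic since $\mu$ is finite.

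For the sufficient direction of (a), I would decompose $\D$ into a Whitney-type family of pseudohyperbolic disks $D_n$, apply the subharmonic mean-value inequality to $|f|^2$ on each $D_n$ to bound $|f(z_n)|^2$ by the local weighted area integral, and then use the Carleson condition to sum $\mu(D_n) |f(z_n)|^2$ against this integral. This yields $\|f\|_{L^2(\mu)}^2 \lesssim \|f\|_{{\mathfrak B}^2_\alpha}^2$ with constant $\lesssim \sup_{\xi, h} \mu[S(\xi, h)]/h^{\alpha+2}$. This is the only genuinely technical step and is the main obstacle.

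For the sufficient direction of (b), I would split $\mu = \mu'_r + \mu''_r$, where $\mu'_r = \mu|_{\{|z| \leq r\}}$ and $\mu''_r = \mu|_{\{|z| > r\}}$. The embedding $J_{\mu'_r}$ is compact by Montel, since a bounded sequence in ${\mathfrak B}^2_\alpha$ is uniformly bounded on the compact set $\{|z| \leq r\}$ and has a subsequence converging uniformly there. For $\mu''_r$, part~(a) applied with constant $\sup_{\xi,h} \mu''_r[S(\xi,h)]/h^{\alpha+2}$ bounds $\|J_{\mu''_r}\|$, and this supremum tends to $0$ as $r \to 1$: for $h < \delta$ one uses the vanishing Carleson assumption uniformly in $r$, while for $h \geq \delta$ one uses $\mu''_r[S(\xi,h)] \leq \mu(\{|z|>r\}) \to 0$ by finiteness of $\mu$. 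Hence $J_\mu$ is a norm-limit of compact operators, and therefore compact.
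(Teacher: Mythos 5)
The paper does not actually prove this statement: it is recalled as background and attributed to Hastings and to MacCluer--Shapiro (Theorem~4.3 there), so there is no internal proof to compare against. Your sketch is the standard reproducing-kernel / Carleson-embedding argument, which is essentially the proof in those references, and its overall structure is sound. Two points need attention. First, in the necessity direction of (a) you start from the boundedness of $J_\mu$, whereas the hypothesis is only the set inclusion ${\mathfrak B}^2_\alpha \subseteq L^2(\mu)$; you should insert a closed-graph argument (norm convergence in ${\mathfrak B}^2_\alpha$ implies uniform convergence on compacta, hence $\mu$-a.e.\ convergence of a subsequence, so the graph of $J_\mu$ is closed) to pass from the inclusion to a bounded inclusion. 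Second, the sufficiency direction of (a) --- which you rightly identify as the technical heart --- is only outlined, and as written the step ``sum $\mu(D_n)\,|f(z_n)|^2$'' is not quite what is needed: to control $\int_{D_n} |f|^2\, d\mu$ you need $\sup_{D_n} |f|^2$, not the value at the center; this is obtained by applying the sub-mean-value inequality for $|f|^2$ at each point of $D_n$ over a slightly enlarged pseudohyperbolic disk $\tilde D_n$, and the summation then requires the bounded-overlap property of the family $(\tilde D_n)$. With those two repairs the argument is complete; the remaining directions (testing on normalized kernels, which tend weakly to $0$, for the necessity of (b), and the splitting $\mu = \mu'_r + \mu''_r$ with the Montel compactness of $J_{\mu'_r}$ and the uniform smallness of $\|J_{\mu''_r}\|$ for the sufficiency of (b)) are correct as you state them.
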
 
\begin{corollary} \label{coro Carleson} 
Let $\phi \colon \D \to \D$ be an analytic self-map and $w \in {\mathfrak B}^2_\alpha$. Set, for every Borel set $B$ in $\D$: 
\begin{equation} \label{mesure mu}
\mu_{w, \phi} (B) = \int_{\phi^{- 1} (B)} |w (z)|^2 (1 - |z|^2)^\alpha \, dA (z) \, .
\end{equation} 
Then:
\begin{enumerate} 
\setlength\itemsep {-0.1 em}

\item [{\rm (a)}] The weighted composition operator $M_w C_\phi \colon {\mathfrak B}^2_\alpha \to {\mathfrak B}^2_\alpha$, defined as:
\begin{equation} 
(M_w C_\phi) f = w \, (f \circ \phi) \, , 
\end{equation} 
is bounded if and only if $\mu_{w, \phi}$ is an $(\alpha + 2)$-Carleson measure.

\item [{\rm (b)}] The weighted composition operator $M_w C_\phi \colon {\mathfrak B}^2_\alpha \to {\mathfrak B}^2_\alpha$ is compact 
if and only if $\mu_{w, \phi}$ is a vanishing $(\alpha + 2)$-Carleson measure.
\end{enumerate}
\end{corollary}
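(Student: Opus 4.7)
The plan is to reduce both statements to the preceding theorem about the canonical inclusion $J_\mu\colon {\mathfrak B}^2_\alpha \to L^2(\mu)$ by means of a pull-back computation. First I would check that $\mu_{w,\phi}$ is indeed a finite Borel measure on $\D$, which follows from $w \in {\mathfrak B}^2_\alpha$: one has $\mu_{w,\phi}(\D) = \int_\D |w|^2(1-|z|^2)^\alpha\, dA = \|w\|_{{\mathfrak B}^2_\alpha}^2/(\alpha+1) < \infty$. So the hypothesis of the preceding theorem is satisfied.

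The key computation is an instance of the transfer formula for pull-back measures. For any analytic $f\colon \D \to \C$, the definition of $\mu_{w,\phi}$ gives
\begin{displaymath}
\int_\D |f|^2 \, d\mu_{w,\phi} = \int_\D |f(\phi(z))|^2 |w(z)|^2 (1-|z|^2)^\alpha \, dA(z) = \frac{1}{\alpha+1} \| M_w C_\phi f\|_{{\mathfrak B}^2_\alpha}^2,
\end{displaymath}
so that, up to the constant $\sqrt{\alpha+1}$, the operator $M_w C_\phi$ and the canonical inclusion $J_{\mu_{w,\phi}}$ have the same norm when applied to any $f$, and more generally the same increments $\|J_{\mu_{w,\phi}} f - J_{\mu_{w,\phi}} g\|_{L^2(\mu_{w,\phi})}$ and $\|M_w C_\phi f - M_w C_\phi g\|_{{\mathfrak B}^2_\alpha}$ are proportional.

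From this proportionality, part~(a) is immediate: $M_w C_\phi$ is bounded on ${\mathfrak B}^2_\alpha$ if and only if $J_{\mu_{w,\phi}}$ is bounded, equivalently (by part~(a) of the preceding theorem) if and only if $\mu_{w,\phi}$ is an $(\alpha+2)$-Carleson measure. For part~(b), the same identity transfers Cauchy sequences between ${\mathfrak B}^2_\alpha$ (through the image of $M_w C_\phi$) and $L^2(\mu_{w,\phi})$: if $(f_n)$ is bounded in ${\mathfrak B}^2_\alpha$, then $(M_w C_\phi f_n)$ admits a convergent subsequence in ${\mathfrak B}^2_\alpha$ if and only if $(J_{\mu_{w,\phi}} f_n)$ admits one in $L^2(\mu_{w,\phi})$. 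Hence compactness of $M_w C_\phi$ is equivalent to compactness of $J_{\mu_{w,\phi}}$, which by part~(b) of the preceding theorem is equivalent to $\mu_{w,\phi}$ being a vanishing $(\alpha+2)$-Carleson measure.

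There is no real obstacle here; the whole argument is a one-line change of variables followed by an invocation of the theorem quoted above. The only mildly delicate point is the transfer of compactness, for which one must avoid the temptation to invert $J_{\mu_{w,\phi}}$ and instead argue directly via Cauchy sequences using the proportionality of increments.
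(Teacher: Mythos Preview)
Your proposal is correct and follows exactly the approach of the paper: the paper's entire proof is the single identity $\|(M_w C_\phi) f\|_{{\mathfrak B}^2_\alpha}^2 = \|f\|_{L^2(\mu_{w,\phi})}^2$ (up to the normalization constant you correctly track), after which both parts are immediate consequences of the preceding Carleson-measure theorem. Your extra care about transferring compactness via Cauchy sequences is a welcome elaboration of what the paper leaves implicit.
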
 
\begin{proof}
Observe that, for all $f \in {\mathfrak B}^2_\alpha$, we have
\begin{displaymath} 
\| (M_w C_\phi) f \|_{{\mathfrak B}^2_\alpha}^2 = \int_\D | f [\phi (z) ] |^2 \, |w (z) |^2 (1 - |z|^2)^\alpha \, dA (z) 
= \| f \|_{L^2 (\mu_{w, \phi})}^2 \, . \qedhere
\end{displaymath} 
\end{proof}
%

%%%%%%%%%%%%%%%%%%%%%%%%%%%%%%%%%%%%%%%%%%%%%%%%%%%%%%%%%%%%%%%%%%%%%%%%%%%%
\section {Compactification} \label{section compactif} 

Recall the following definitions (see \cite[Section~4.1]{Shapiro-livre}).

\begin{definition} 
A holomorphic self-map $\phi \colon \D \to \D$ has an \emph{angular limit} {\rm (}or a \emph{non-tangential limit}{\rm )} $l$ at $\xi \in \T$ if $\phi (z)$ 
converges to $l$ whenever $z$ tends to $\xi$ inside any angular sector in $\D$ whose vertex is $\xi$. Then $l$ is called the \emph{angular limit} of $\phi$ at $\xi$ 
and is denoted:
\begin{displaymath} 
l = \angle \lim_{z \to \xi} \phi (z) \, .
\end{displaymath} 
\end{definition}
\begin{definition} \label{angular derivative}
A holomorphic self-map $\phi \colon \D \to \D$ has an \emph{angular derivative} at $\xi \in \T$ if it has an angular limit $\zeta$ at $\xi$, 
\emph{with $|\zeta| = 1$} and:
\begin{displaymath} 
\angle \lim_{z \to \xi} \frac{\phi (z) - \zeta}{z - \xi}  
\end{displaymath} 
exists and is finite. This limit is called the angular derivative of $\phi$ at $\xi$ and is denoted by $\phi ' (\xi)$. 
\end{definition}

Let us also recall that the Julia-Carath\'eodory theorem (see \cite[Section~4.2]{Shapiro-livre}), says that $\phi$ has an angular derivative at $\xi \in \T$ if and 
only if:
\begin{equation} 
\delta := \liminf_{z \to \xi} \frac{1 - |\phi (z)|}{1 - |z|} < + \infty \, ,
\end{equation} 
or, equivalently:
\begin{equation} \label{J-C}
\limsup_{z \to \xi} \frac{1 - |z|}{1 - |\phi (z)|} > 0 \, ,
\end{equation} 
and, when this happens, we have $\delta > 0$ and $\phi ' (\xi) = \xi \, \bar{\zeta} \, \delta$, so $|\phi ' (\xi) | = \delta$. 
\medskip

We define
\begin{equation} \label{AD}
{\mathcal {AD}} (\phi) = \{ \xi \in \T \tq \text{$\phi$ has an angular derivative at $\xi$} \} \, ,
\end{equation} 
and we call it the \emph{angular derivative set of $\phi$}. 
\medskip

B.~MacCluer and J.~Shapiro proved (\cite[Theorem~3.5]{MacCluer-Shapiro}) that, for $\alpha > - 1$, the composition operator 
$C_\phi \colon {\mathfrak B}^2_\alpha \to {\mathfrak B}^2_\alpha$ is compact if and only if:
\begin{equation} \label{compact}
{\mathcal {AD}} (\phi) = \emptyset \, . 
\end{equation} 

Asking for a compactification, we have the following result. 
\medskip
\goodbreak

\begin{theorem}\label{compactif}
Let $\phi \colon \D \to \D$ be an analytic self-map. Then the following assertions are equivalent, for the weighted Bergman space ${\mathfrak B}^2_\alpha$, 
with $\alpha > - 1$:
\begin{enumerate} 
\setlength\itemsep {-0.1 em}

\item [$1)$] there exists a holomorphic function $w$, with $w \not\equiv 0$, such that the weighted composition operator 
$M_w C_\phi \colon {\mathfrak B}^2_\alpha \to {\mathfrak B}^2_\alpha$ is compact; 

\item [$2)$] there exists a weight $w \in H^\infty$, with $w \not\equiv 0$, such that the weighted composition operator 
$M_w C_\phi \colon {\mathfrak B}^2_\alpha \to {\mathfrak B}^2_\alpha$ is compact;

\item [$3)$] the angular derivative set of $\phi$ has null measure: 
\noindent
\begin{equation} \label{compactifiable}
m [{\mathcal {AD}} (\phi)] = 0 \, , 
\end{equation} 
where $m$ is the normalized Lebesgue measure on $\T = \partial \D$; 

\item [$4)$] $\dis \lim_{z \to \xi} \frac{1 - |z|}{1 - |\phi (z)|} = 0$ for almost all $\xi \in \T$.
\end{enumerate}
\end{theorem}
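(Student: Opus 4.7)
The implication $2)\Rightarrow 1)$ is immediate. For $3)\Leftrightarrow 4)$, I would observe that by the Julia-Carath\'eodory criterion \eqref{J-C}, $\mathcal{AD}(\phi)=\{\xi\in\T:\limsup_{z\to\xi}(1-|z|)/(1-|\phi(z)|)>0\}$, so $m(\mathcal{AD}(\phi))=0$ is equivalent to this limsup vanishing at almost every $\xi\in\T$, which is the same as the limit being zero almost everywhere.

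For $1)\Rightarrow 3)$, I would test compactness against reproducing kernels. In $\mathfrak{B}^2_\alpha$ one has $\|K_z\|^2\asymp(1-|z|^2)^{-(\alpha+2)}$ and a direct computation yields $(M_wC_\phi)^\ast K_z=\overline{w(z)}\,K_{\phi(z)}$. Since the normalized kernels $k_z=K_z/\|K_z\|$ tend weakly to zero as $|z|\to 1$, compactness of $M_wC_\phi$ (hence of its adjoint, by Schauder) forces
\[
|w(z_n)|\Bigl(\frac{1-|z_n|^2}{1-|\phi(z_n)|^2}\Bigr)^{(\alpha+2)/2}\longrightarrow 0
\]
whenever $|z_n|\to 1$. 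If $\xi\in\mathcal{AD}(\phi)$ and $z_n\to\xi$ non-tangentially, the Julia-Carath\'eodory theorem gives $(1-|\phi(z_n)|)/(1-|z_n|)\to|\phi'(\xi)|<\infty$, so the ratio $(1-|z_n|)/(1-|\phi(z_n)|)$ stays bounded below; consequently $w(z_n)\to 0$, meaning $w$ has angular limit zero at every $\xi\in\mathcal{AD}(\phi)$. If $m(\mathcal{AD}(\phi))>0$, the Lusin-Privalov uniqueness theorem forces $w\equiv 0$, a contradiction.

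For $3)\Rightarrow 2)$ the plan is constructive. Choose a decreasing sequence of open neighborhoods $V_n\supset\mathcal{AD}(\phi)$ in $\T$ with $\sum_n m(V_n)<\infty$ and set $u=\sum_n\mathbf 1_{V_n}\in L^1(\T)$, which equals $+\infty$ on $\mathcal{AD}(\phi)$; let $w$ be the outer function in $H^\infty$ with $|w^\ast|=e^{-u}$. Then $w\not\equiv 0$ and $|w(z)|=e^{-P[u](z)}$ tends non-tangentially to zero at every $\xi\in\mathcal{AD}(\phi)$ (for $\xi\in V_n$ open, $P[\mathbf 1_{V_n}](z)\to 1$ non-tangentially, so $P[u]\to+\infty$). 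By Corollary \ref{coro Carleson}(b), what remains is to show the uniform vanishing Carleson estimate
\[
\int_{\phi^{-1}(S(\zeta,h))}e^{-2P[u](z)}(1-|z|^2)^\alpha\,dA(z)=o(h^{\alpha+2}) \quad (h\to 0)
\]
uniformly in $\zeta\in\T$. This is the main obstacle. The natural strategy is to split $\phi^{-1}(S(\zeta,h))$ according to the size of $\rho(z):=(1-|z|)/(1-|\phi(z)|)$: on $\{\rho<\eta\}$ one has $1-|z|<\eta h$, and by $4)$ the weighted area of this thin strip inside a Carleson box is a small fraction of $h^{\alpha+2}$ as $\eta\to 0$; on $\{\rho\geq\eta\}$ the point $z$ lies in a non-tangential approach region of $\mathcal{AD}(\phi)$, where $P[u](z)$ is large and the exponential decay of $|w|^2$ beats the Carleson bound. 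A delicate tuning of the sets $V_n$ (or, if a single outer function does not suffice, a countable product $w=\prod_k w_k$ with $w_k$ targeted to a dyadic scale $h\sim 2^{-k}$) should deliver the uniformity in $\zeta$.
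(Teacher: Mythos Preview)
Your arguments for $2)\Rightarrow 1)$, $3)\Leftrightarrow 4)$, and $1)\Rightarrow 3)$ are correct and coincide with the paper's (the reproducing-kernel computation is exactly Moorhouse's necessary condition, Proposition~\ref{prop Moorhouse}\,$1)$, and the uniqueness step is Privalov's theorem).

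The gap is in $3)\Rightarrow 2)$. Your outer-function construction of $w$ is fine --- in fact it gives more than you claim: since each $V_n$ is open and contains $\mathcal{AD}(\phi)$, the indicator $\mathbf 1_{V_n}$ is continuous at every $\xi\in\mathcal{AD}(\phi)$, so $P[\mathbf 1_{V_n}](z)\to 1$ with \emph{unrestricted} approach $z\to\xi$, and hence $w(z)\to 0$ unrestrictedly on $\mathcal{AD}(\phi)$. But your plan to verify compactness through the vanishing Carleson condition is left as a sketch that does not close. The splitting on $\{\rho<\eta\}$ versus $\{\rho\geq\eta\}$ does not obviously work: the assertion that points with $\rho(z)\geq\eta$ lie in a non-tangential approach region of $\mathcal{AD}(\phi)$ is unjustified (a sequence with $\rho(z_n)\geq\eta$ accumulating at $\xi$ only tells you $\xi\in\mathcal{AD}(\phi)$, not that the approach is non-tangential), and the uniformity in $\zeta$ you need is not established. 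The proposed ``delicate tuning'' or infinite product of scale-adapted weights is speculation, not an argument.

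The paper bypasses the Carleson estimate entirely. It invokes Moorhouse's \emph{sufficient} condition for bounded weights (Proposition~\ref{prop Moorhouse}\,$2)$): $M_wC_\phi$ is compact on $\mathfrak B^2_\alpha$ as soon as
\[
\lim_{|z|\to 1}|w(z)|^2\,\frac{1-|z|^2}{1-|\phi(z)|^2}=0.
\]
This is checked pointwise at every $\xi\in\T$: for $\xi\notin\mathcal{AD}(\phi)$ the ratio $(1-|z|)/(1-|\phi(z)|)$ tends to $0$ unrestrictedly by Julia--Carath\'eodory, while $w$ is bounded; for $\xi\in\mathcal{AD}(\phi)$ the weight $w(z)\to 0$ unrestrictedly while the ratio is bounded by Schwarz--Pick. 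A compactness argument on $\overline\D$ then upgrades this to the global limit as $|z|\to 1$. Your $w$ would work perfectly in this argument --- you just didn't use the right criterion for compactness.
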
 
\goodbreak

For example, if $\phi (z) = \frac{1 + z}{2}\,$, then $C_\phi$ is not compact on ${\mathfrak B}^2_\alpha$, but it is compactifiable by a weight in $H^\infty$.
\smallskip

When the equivalent conditions of Theorem~\ref{compactif} are satisfied, we say that composition operator $C_\phi$ is \emph{compactifiable}.
\smallskip

The proof will be based on the following result of Moorhouse (\cite[Corollary~1]{Moorhouse}; see also  \cite[Proposition~1]{Cuc-Zhao}).

\begin{proposition} [Moorhouse] \label{prop Moorhouse}
Let $\alpha > - 1$. Let $\phi$ and $w$ be analytic functions on $\D$. Then:
\begin{itemize}
\setlength\itemsep {-0.1 em}

\item [$1)$] If the weighted composition operator $M_w C_\phi$ is compact on ${\mathfrak B}^2_\alpha$, we have:
\begin{equation} \label{cond necess Moorhouse}
\lim_{|z| \to 1} |w (z) |^2 \bigg(\frac{1 - |z|^2}{1 - | \phi (z)|^2}\bigg)^{\alpha + 2} = 0 \, .
\end{equation} 

\item [$2)$] When $w$ is bounded, $M_w C_\phi$ is compact on ${\mathfrak B}^2_\alpha$ if and only if 
\begin{equation} \label{cond Moorhouse}
\lim_{|z| \to 1} |w (z) |^2 \frac{1 - |z|^2}{1 - | \phi (z)|^2} = 0 \, .
\end{equation} 
\end{itemize}
\end{proposition}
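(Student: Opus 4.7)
The plan is to treat the two parts separately: Part~1 via the reproducing-kernel technique on $\mathfrak B^2_\alpha$, and Part~2 via the Carleson-measure characterization of Corollary~\ref{coro Carleson}.

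For Part~1, I would use the reproducing kernel of $\mathfrak B^2_\alpha$ at $a\in\D$, namely $K_a(z)=(1-\bar a z)^{-(\alpha+2)}$, for which $\|K_a\|^2=(1-|a|^2)^{-(\alpha+2)}$; the normalized kernels $\tilde k_a=K_a/\|K_a\|$ tend weakly to $0$ as $|a|\to 1$. The reproducing identity $(M_w C_\phi f)(a)=w(a)f(\phi(a))$ yields the adjoint formula $(M_w C_\phi)^* K_a=\overline{w(a)}\,K_{\phi(a)}$, and compactness of $M_w C_\phi$ (hence of its adjoint) forces $\|(M_w C_\phi)^*\tilde k_a\|\to 0$. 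The squared norm equals $|w(a)|^2(1-|a|^2)^{\alpha+2}/(1-|\phi(a)|^2)^{\alpha+2}$, which is exactly (\ref{cond necess Moorhouse}).

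For the necessity direction in Part~2, I would deduce (\ref{cond Moorhouse}) from (\ref{cond necess Moorhouse}) together with $w\in H^\infty$: writing $r(z)=(1-|z|^2)/(1-|\phi(z)|^2)$, if $|w(z_n)|^2 r(z_n)\ge\varepsilon$ along some sequence $z_n\to\T$, then multiplying by $r(z_n)^{\alpha+1}$ gives $\varepsilon\,r(z_n)^{\alpha+1}\le |w(z_n)|^2 r(z_n)^{\alpha+2}\to 0$, so $r(z_n)\to 0$ (since $\alpha+1>0$); but then $|w(z_n)|^2\ge\varepsilon/r(z_n)\to\infty$, contradicting $w\in H^\infty$. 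For the sufficiency, I would invoke Corollary~\ref{coro Carleson} and show that (\ref{cond Moorhouse}) forces $\mu_{w,\phi}$ to be a vanishing $(\alpha+2)$-Carleson measure. Writing $|w(z)|^2\le\eta(|z|)(1-|\phi(z)|^2)/(1-|z|^2)$ with $\eta(|z|)\to 0$, and using that for $h$ small $\phi^{-1}(S(\xi,h))\subset\{|z|>r_0\}$ (since $\phi(\{|z|\le r_0\})$ is a compact subset of $\D$), so that $(1-|\phi(z)|^2)\le 2h$ there, one obtains
\begin{displaymath}
\mu_{w,\phi}(S(\xi,h))\le 2\eta(r_0)\,h\int_{\phi^{-1}(S(\xi,h))}(1-|z|^2)^{\alpha-1}\,dA(z).
\end{displaymath}

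The remaining integral should be bounded by $O(h^{\alpha+1})$ uniformly in $\xi$: for $\alpha>0$ this follows from boundedness of $C_\phi$ on $\mathfrak B^2_{\alpha-1}$ (the pullback measure $(1-|z|^2)^{\alpha-1}\,dA\circ\phi^{-1}$ is then $(\alpha+1)$-Carleson); for $-1<\alpha\le 0$ it will require a sub-mean-value argument on pseudohyperbolic disks instead. The main obstacle will be precisely this uniform $O(h^{\alpha+1})$ estimate, particularly in the range $-1<\alpha\le 0$ where the auxiliary weight $(1-|z|^2)^{\alpha-1}$ is non-integrable and the boundedness of an auxiliary composition operator is not directly available.
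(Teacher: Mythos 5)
Your Part~1 (normalized reproducing kernels, weak null convergence, adjoint formula $(M_w C_\phi)^*K_a=\overline{w(a)}\,K_{\phi(a)}$) is exactly the paper's argument, and your deduction of the necessity in Part~2 from \eqref{cond necess Moorhouse} plus $w\in H^\infty$ is correct and equivalent to the small lemma on bounded functions that the paper uses for this step (note that $r(z)=(1-|z|^2)/(1-|\phi(z)|^2)$ is bounded by Schwarz--Pick, though your contradiction argument does not even need that).

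The genuine gap is in the sufficiency half of Part~2, and you have correctly located it yourself. Your estimate
\begin{displaymath}
\mu_{w,\phi}\big(S(\xi,h)\big)\leq 2\,\eta(r_0)\,h\int_{\phi^{-1}(S(\xi,h))}(1-|z|^2)^{\alpha-1}\,dA(z)
\end{displaymath}
does close the argument when $\alpha>0$, since then $(1-|z|^2)^{\alpha-1}\,dA$ pushed forward by $\phi$ is an $(\alpha+1)$-Carleson measure by the boundedness of $C_\phi$ on ${\mathfrak B}^2_{\alpha-1}$. But for $-1<\alpha\leq 0$ the auxiliary integral can be infinite (the weight $(1-|z|^2)^{\alpha-1}$ is not locally integrable near $\partial\D$, and $\phi^{-1}(S(\xi,h))$ generally touches the boundary), so the bound is vacuous and the crude replacement of $1-|\phi(z)|^2$ by $2h$ throws away too much. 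Repairing this requires a genuinely different mechanism -- e.g.\ the sub-mean-value property of $|w|^2$ on pseudo-hyperbolic disks together with a Whitney-type covering, which is precisely the content of \cite[Lemma~1]{Moorhouse}. The paper does not reprove this either: it simply cites Moorhouse's Lemma~1 for the sufficiency, so your proposal is more ambitious on this point but, as written, establishes Part~2 only for $\alpha>0$.
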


For $1)$, we compute: 
\begin{displaymath} 
\| (M_w C_\phi)^\ast (k_z) \|_{( {\mathfrak B}^2_\alpha)^\ast}^2 
= |w (z)|^2  \bigg( \frac{1 - |z|^2}{1 - |\phi (z)|^2} \bigg)^{\alpha + 2} \, \raise 1pt \hbox{,}
\end{displaymath} 
where $k_z$ is the normalized reproducing kernel of ${\mathfrak B}^2_\alpha$, and, using that $k_z$ weakly converges to $0$ as $|z| \to 1$, we obtain:
\begin{displaymath} 
\lim_{|z| \to 1} |w (z)|^2 \bigg( \frac{1 - |z|^2}{1 - |\phi (z)|^2} \bigg)^{\alpha + 2} = 0 \, .
\end{displaymath} 
To obtain the necessary condition in 2), we use the following, easily checked, fact which shows that \eqref{cond Moorhouse} is equivalent to 
\eqref{cond necess Moorhouse}, since $w$ is bounded in $2)$. 
\goodbreak
\begin{lemma}
Let $f, g \colon \D \to [0, \infty)$ be two bounded functions. Then the following assertions are equivalent:
\begin{itemize}
\setlength\itemsep {-0.1 em}

\item [{\rm a)}] $\lim_{|z| \to 1} f (z) \, g (z) = 0$;

\item [{\rm b)}]  $\lim_{|z| \to 1} \min [ f (z), g (z)] = 0$;

\item [{\rm c)}]  $\lim_{|z| \to 1} [f (z)]^a [g (z)]^b = 0$, for all $a, b > 0$. 
\end{itemize}
\end{lemma}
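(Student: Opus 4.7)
The plan is to prove the cycle (c) $\Rightarrow$ (a) $\Rightarrow$ (b) $\Rightarrow$ (c). Two of these implications are essentially trivial, and the only content lies in getting from $\min(f,g) \to 0$ to the full power statement (c), where boundedness of $f$ and $g$ is crucial.

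First, (c) $\Rightarrow$ (a) is immediate by specializing to $a = b = 1$. For (a) $\Rightarrow$ (b), I would use the pointwise inequality
\begin{displaymath}
\min [f(z), g(z)]^2 \le f(z) \, g(z) \, ,
\end{displaymath}
valid because $f, g \ge 0$. Since the right-hand side tends to $0$ as $|z| \to 1$, so does $\min(f, g)$.

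The main step is (b) $\Rightarrow$ (c). Let $M$ be a common bound for $f$ and $g$, and fix $a, b > 0$. At each $z \in \D$, I split into cases according to which of $f(z), g(z)$ achieves the minimum: if $f(z) \le g(z)$, then $f(z)^a g(z)^b \le \min[f(z), g(z)]^a \cdot M^b$; otherwise $f(z)^a g(z)^b \le M^a \cdot \min[f(z), g(z)]^b$. In either case,
\begin{displaymath}
f(z)^a \, g(z)^b \le M^b \, \min [f(z), g(z)]^a + M^a \, \min [f(z), g(z)]^b \, .
\end{displaymath}
Since $\min(f,g) \to 0$ as $|z| \to 1$ by (b), both terms on the right tend to $0$, and (c) follows.

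There is no real obstacle; the only subtlety is remembering that boundedness is indispensable for the implication (b) $\Rightarrow$ (c) (otherwise one could have $f \equiv 1/\min$ blow up faster than $\min^a$ decays), and this is exactly where the hypothesis that $f, g$ are bounded is used.
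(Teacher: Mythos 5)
Your proof is correct. The paper gives no argument for this lemma at all --- it is stated as an ``easily checked'' fact --- so there is nothing to compare against; your cycle c) $\Rightarrow$ a) $\Rightarrow$ b) $\Rightarrow$ c), with the inequality $\min[f,g]^2 \leq fg$ for one direction and the case split $f(z)^a g(z)^b \leq M^b \min[f,g]^a + M^a \min[f,g]^b$ for the other, is exactly the standard verification the authors had in mind, and your remark that boundedness is indispensable for b) $\Rightarrow$ c) is the right thing to flag.
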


The sufficient condition in $2)$ is proved by \cite[Lemma~1]{Moorhouse}.

\begin{proof} [Proof of Theorem~\ref{compactif}] 
The implication $2) \Rightarrow 1)$ needs no comment.
\smallskip

\noindent $3) \Rightarrow 2)$ Assume that $m [{\mathcal{AD}} (\phi)] = 0$. A theorem of Privalov (see \cite[Vol.~I, bottom of page 276]{Zyg}),
 asserts the existence a function $w \not\equiv 0$ in $H^\infty$ such that
\begin{equation} 
\qquad \lim_{z \to \xi} w (z) = 0 \quad \text{for all } \xi \in {\mathcal{AD}} (\phi) \, .
\end{equation} 

The Schwarz-Pick lemma (see \cite[Corollary~2.40]{Co-MC}) tells that 
\begin{displaymath} 
\frac{1 - |z|^2}{1 - |\phi (z)|^2} \leq 2 \, \frac{1 - |z|}{1 - |\phi (z)|} \leq 2 \, \frac{1 + |\phi (0)|}{1 - |\phi (0)|}  \, \cdot
\end{displaymath} 
Hence for $\xi \in {\mathcal{AD}} (\phi)$, we have
\begin{displaymath} 
\lim_{z \to \xi} |w (z)|^2 \, \frac{1 - |z|^2}{1 - |\phi (z)|^2} = 0 \, . 
\end{displaymath} 
For $\xi \notin {\mathcal{AD}} (\phi)$, thanks to the Julia-Carath\'eodory theorem and \eqref{J-C}, we also have
\begin{displaymath} 
\lim_{z \to \xi} |w (z)|^2 \, \frac{1 - |z|^2}{1 - |\phi (z)|^2} = 0 \, .
\end{displaymath} 
Hence
\begin{equation} \label{for all}
\qquad\qquad \lim_{z \to \xi} |w (z)|^2 \, \frac{1 - |z|^2}{1 - |\phi (z)|^2} = 0 \quad \text{for all } \xi \in \T \, .
\end{equation} 
By a compactness argument, we obtain that
\begin{equation} \label{compactness argument} 
\lim_{|z| \to 1} |w (z)|^2 \, \frac{1 - |z|^2}{1 - |\phi (z)|^2} = 0 \, ;
\end{equation} 
in fact, if \eqref{compactness argument} failed, there would be a sequence $(z_n)$ such that $|z_n| \converge_{n \to \infty} 1$ and for which
\begin{displaymath} 
\limsup_{n \to \infty}  |w (z_n) |^2 \, \frac{1 - |z_n|^2}{1 - |\phi (z_n)|^2} > 0 \, ; 
\end{displaymath} 
by compactness a subsequence converges to some $\xi \in \partial\D$, and that would contradict \eqref{for all}. 

Since $w$ is bounded, it follows from Proposition~\ref{prop Moorhouse}, that $M_w C_\phi$ is compact on ${\mathfrak B}^2_\alpha$. 
\smallskip

\noindent $1) \Rightarrow 3)$ Assume that $M_w C_\phi$ is compact with $w$ analytic and $w \not\equiv 0$.  By 
Proposition~\ref{prop Moorhouse},\,$1)$, we have:
\begin{equation} \label{Moorhouse} 
\lim_{|z| \to 1} |w (z)|^2 \bigg(\frac{1 - |z|^2}{1 - |\phi (z)|^2}\bigg)^{\alpha + 2} = 0 \, .
\end{equation} 
In particular, for every $\xi \in \T$:
\begin{equation} \label{Moorhouse-bis}
\lim_{z \to \xi} |w (z)|^2 \bigg( \frac{1 - |z|^2}{1 - |\phi (z)|^2} \bigg)^{\alpha  + 2}= 0 \, .
\end{equation} 
Now, for every $\xi \in {\mathcal{AD}} (\phi)$, we have, if $\zeta$ is the angular limit of $\phi$ at $\xi$:
\begin{displaymath} 
\lim_{z \to \xi} \bigg| \frac{\phi (z) - \zeta}{z - \xi} \bigg| = |\phi ' (\xi) | < \infty \, .
\end{displaymath} 
If $z$ belongs to an angular sector $S_\xi$ of vertex $\xi$, there is a positive constant $C$, depending only on this sector, such that $|z - \xi| \leq C (1 - |z|)$; 
hence
\begin{displaymath} 
\liminf_{z \to \xi, z \in S_\xi} \frac{1 - |z|}{1 - |\phi (z) |} \geq \liminf_{z \to \xi, z \in S_\xi} C \, \frac{|z - \xi |}{|\phi (z) - \zeta|} 
= \frac{C}{|\phi ' (\xi)|} > 0 \, .
\end{displaymath} 
Then, it follows, with \eqref{Moorhouse-bis}, that $\lim_{z \to \xi, z \in S_\xi} w (z) = 0$. Since the angular sector $S_\xi$ is arbitrary, we get that
$\angle \lim_{z \to \xi} w (z) = 0$. 
\smallskip

By another theorem of Privalov (see \cite[Chapter~XIV, Theorem~(1.1) and Theorem~(1.9)]{Zyg}, 
\cite[Chapter~VI, Theorem~2.3]{Garnett-Marshall}, or \cite[Chapter~II, Exercise 10]{Garnett}, where it is called ``local Fatou theorem'') , it follows, since 
$w \not\equiv 0$, that $m [{\mathcal{AD}} (\phi)] = 0$. 
\smallskip

\noindent $3) \Longleftrightarrow 4)$ follows from the Julia-Caratheodory theorem, as stated in \eqref{J-C}.
\end{proof}

\noindent {\bf Remark~1.} The implication $2) \Rightarrow 3)$ can be proved using the classical F. and M. Riesz theorem (see \cite[Theorem~2.2]{Duren}) 
instead of Privalov's theorem.
\smallskip

\noindent {\bf Remark~2.} Condition \eqref{Moorhouse} is necessary for the compactness of $M_w C_\phi$; however, it is not sufficient in general 
without this assumption that $w \in H^\infty$. An example is given in \cite[Section~5, Corollary~4]{Cuc-Zhao} for which $M_w C_\phi$, with $w = \phi '$, 
is not even bounded on ${\mathfrak B}^2$.

%%%%%%%%%%%%%%%%%%%%%%%%%%%%%%%%%%%%%%%%%%%%%%%%%%%%%%%%%%%%%%%%%%%%%%%%%%%%
\section {Decompactification} \label{section decompact}

\subsection {The main result} \label{subsection main} 

In the sequel, as usual, $\alpha > - 1$.

\begin{definition}
We say that the composition operator $C_\phi \colon {\mathfrak B}^2_\alpha \to {\mathfrak B}^2_\alpha$ is \emph{decompactifiable} if there exists a weight 
$w \in {\mathfrak B}^2$ such that the weighted composition operator $M_w C_\phi \colon {\mathfrak B}^2_\alpha \to {\mathfrak B}^2_\alpha$ is bounded 
but not compact.
\end{definition}

Our main result is the following.
\goodbreak

\begin{theorem} \label{theo decompact} 
Let $\phi \colon \D \to \D$ be an analytic self-map. Then the composition operator $C_\phi \colon {\mathfrak B}^2_\alpha \to {\mathfrak B}^2_\alpha$ is 
decompactifiable if and only if $\| \phi \|_\infty = 1$. 
\end{theorem}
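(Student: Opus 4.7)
Forward direction. Suppose $\|\phi\|_\infty = r < 1$. Then $\phi(\D)$ lies in the compact disk $\overline{D(0,r)}$, so $\phi^{-1}(S(\xi,h)) = \emptyset$ whenever $h < 1-r$. Consequently $\mu_{w,\phi}(S(\xi,h)) = 0$ for all sufficiently small $h$, independently of $w$, so $\mu_{w,\phi}$ is trivially a vanishing $(\alpha+2)$-Carleson measure whenever it is $(\alpha+2)$-Carleson at all. Corollary~\ref{coro Carleson} then shows that any bounded $M_w C_\phi$ is automatically compact, which is the contrapositive of decompactifiability.

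Backward direction. Suppose $\|\phi\|_\infty = 1$. I would distinguish two cases. If ${\mathcal{AD}}(\phi) \neq \emptyset$, the MacCluer--Shapiro criterion~\eqref{compact} ensures that $C_\phi$ itself is already bounded and non-compact on ${\mathfrak B}_\alpha^2$; taking $w \equiv 1$ finishes the job. The substantive case is ${\mathcal{AD}}(\phi) = \emptyset$, so $C_\phi$ is compact. Pick $(z_n) \subset \D$ with $|\phi(z_n)| \to 1$; by Julia--Carath\'eodory applied at every boundary point, $(1-|\phi(z_n)|)/(1-|z_n|) \to \infty$ and $|z_n| \to 1$. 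By successive thinning one obtains a subsequence that is very sparse in the pseudohyperbolic metric ($\rho_\D(z_n,z_m) \to 1$ as $\min(n,m) \to \infty$) and satisfies $1 - |\phi(z_n)|^2 \leq 4^{-n}$. Setting $c_n = (1-|\phi(z_n)|^2)^{(\alpha+2)/2}$, I would test the weight
\begin{displaymath}
w(z) = \sum_n c_n \, k_{z_n}^\alpha(z),
\end{displaymath}
where $k_{z_n}^\alpha$ denotes the normalized reproducing kernel of ${\mathfrak B}_\alpha^2$ at $z_n$; since $\sum c_n^2 < \infty$ and the normalized kernels are nearly orthogonal along a sparse sequence, $w$ lies in ${\mathfrak B}_\alpha^2$.

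Non-compactness of $M_w C_\phi$ should then follow from Moorhouse's necessary condition~\eqref{cond necess Moorhouse}. Using the identity $|k_{z_m}^\alpha(z_n)| = (1-\rho_\D(z_n,z_m)^2)^{(\alpha+2)/2}(1-|z_n|^2)^{-(\alpha+2)/2}$, the off-diagonal contributions are much smaller than the diagonal term $c_n k_{z_n}^\alpha(z_n) = c_n(1-|z_n|^2)^{-(\alpha+2)/2}$, so $|w(z_n)| \gtrsim c_n(1-|z_n|^2)^{-(\alpha+2)/2}$ and
\begin{displaymath}
|w(z_n)|^2 \left(\frac{1-|z_n|^2}{1-|\phi(z_n)|^2}\right)^{\alpha+2} \gtrsim \frac{c_n^2}{(1-|\phi(z_n)|^2)^{\alpha+2}} = 1,
\end{displaymath}
violating~\eqref{cond necess Moorhouse}.

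The main obstacle is proving $M_w C_\phi$ is \emph{bounded}, equivalently that $\mu_{w,\phi}$ is an $(\alpha+2)$-Carleson measure on $\D$. My plan is to decompose $\D$ into pseudohyperbolic disks $\Delta_n$ centered at the $z_n$, on which $|w|^2 (1-|z|^2)^\alpha dA$ carries total mass of order $c_n^2$ (the complement carries very little by sparsity). For a Carleson box $S(\xi,h)$, only those $\Delta_n$ with $\phi(\Delta_n) \cap S(\xi,h) \neq \emptyset$ contribute, and such indices satisfy $1-|\phi(z_n)| \lesssim h$ by Schwarz--Pick, forcing $c_n^2 \lesssim h^{\alpha+2}$; since the $c_n^2$ decay geometrically, the total of the relevant contributions is $O(h^{\alpha+2})$. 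The tail outside $\bigcup_n \Delta_n$ and the cross-terms between different kernels should be controlled by the polynomial decay of $|k_{z_n}^\alpha|$ in the pseudohyperbolic metric, via a Schur-type estimate.
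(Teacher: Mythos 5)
Your treatment of the easy direction ($\|\phi\|_\infty<1$) and the reduction of the converse to the case where $C_\phi$ is already compact both coincide with the paper. For the substantive case you then take a genuinely different route --- a lacunary sum $w=\sum_n c_n k^\alpha_{z_n}$ of normalized reproducing kernels along a pseudohyperbolically sparse sequence --- and the step you yourself flag as ``the main obstacle'', the boundedness of $M_wC_\phi$, is a genuine gap that your sketch does not close. The difficulty is that $w$ is not supported on $\bigcup_n\Delta_n$: the quantity $\mu_{w,\phi}[S(\xi,h)]=\int_{\phi^{-1}(S(\xi,h))}|w|^2\,dA_\alpha$ receives contributions from the tails of \emph{every} kernel over all of $\phi^{-1}(S(\xi,h))$, a set which is in no way localized near the $z_n$. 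Concretely, for an index with $\delta_n:=1-|\phi(z_n)|\gg h$, the only leverage Schwarz--Pick gives is that $\phi^{-1}(S(\xi,h))\subseteq\{1-|\phi|<h\}$ avoids the pseudohyperbolic disk $\Delta(z_n,1-Ch/\delta_n)$; a M\"obius change of variables shows
$\int_{\D\setminus\Delta(z_n,r)}|k^\alpha_{z_n}|^2\,dA_\alpha\approx(1-r)^{\alpha+1}$,
so this tail contributes about $c_n^2(h/\delta_n)^{\alpha+1}=h^{\alpha+1}\delta_n$, and summing the geometric sequence $\delta_n$ yields only ${\rm O}\,(h^{\alpha+1})$ --- short of the required ${\rm O}\,(h^{\alpha+2})$ by a full power of $h$. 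To do better one must exploit the angular localization of $S(\xi,h)$, or the fact that $\phi(dA_\alpha)$ is a \emph{vanishing} $(\alpha+2)$-Carleson measure, inside the tail estimate; ``a Schur-type estimate'' is not a plan that visibly does this. (The non-compactness half of your argument, via Moorhouse's necessary condition and near-orthogonality of the kernels, is fine modulo routine thinning, but it is conditional on boundedness.)

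It is worth seeing how the paper sidesteps exactly this issue: the weight is taken of the form $w=u\circ\phi$, so that $\mu_{w,\phi}(B)=\int_B|u|^2\,d\nu$ with $\nu=\phi(dA_\alpha)$, and the entire problem moves to the image side. There $u=\sum_n v_n$ is built block by block (Proposition~\ref{prop 3} and Theorem~\ref{theo clef}) so that each $v_n$ has its Carleson ratios $h^{-\gamma}\int_{S(\xi,h)}|v_n|^2\,d\nu$ explicitly bounded at \emph{every} scale $h$ --- boundedness is engineered into the construction rather than verified afterwards --- while a single scale $t_n$ per block keeps the ratio bounded below, preventing compactness. Unless you can supply a quantitatively sharper tail estimate for the kernel construction, I would adopt the composition trick $w=u\circ\phi$.
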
 

It is a consequence of this other theorem, whose proof is postponed.

\begin{theorem} \label{theo clef}
Let $\gamma > 1$ and $\nu$ be a vanishing $\gamma$-Carleson measure on $\D$. Assume that $\nu$ satisfies the following property:
\begin{equation} \label{condition}
\forall t > 0\, , \quad \exists \zeta \in \partial \D \quad \text{such that } \nu [S (\zeta, t)] > 0 \,. 
\end{equation} 

Then there exists a holomorphic function $u \colon \D \to \C$ such that 
\begin{itemize}
\setlength\itemsep{-0.1 em}

\item [$(i)$] $u \in L^2 (\nu)$;

\item [$(ii)$] $\displaystyle \sup_{|\xi| = 1, \, 0 < h \leq 1}\frac{1}{h^\gamma} \int_{S (\xi, h)} |u|^2 \, d \nu < \infty$;

\item [$(iii)$] there exist $\delta > 0$ and two sequences $(\zeta_n)$ in $\partial\D$ and $(t_n)$ in $(0, 1)$ with $t_n \converge_{n \to \infty} 0^+$ such that
\begin{equation} 
\qquad \qquad \frac{1}{t_n^\gamma} \int_{S (\zeta_n, t_n)} |u|^2 \, d \nu\geq \delta \, , \quad \text{for all } n \geq 1 \, .
\end{equation} 
\end{itemize}
\end{theorem}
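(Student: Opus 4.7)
The plan is to construct $u$ as a lacunary series of Cauchy-type atoms,
\[
u(z) = \sum_{n=1}^\infty c_n\, (1-\bar a_n z)^{-\beta} \, , \qquad a_n = (1-t_n)\zeta_n,
\]
where $\beta > \gamma/2$ is fixed, and the sequences $(\zeta_n) \subset \partial\D$, $(t_n) \subset (0,1)$ together with the coefficients $(c_n)$ will be extracted from the structure of $\nu$. The role of the threshold $\beta > \gamma/2$ is to make the classical Carleson embedding quantitative: for any $\gamma$-Carleson measure $\nu$,
\[
\int_\D \frac{d\nu(z)}{|1-\bar a z|^{2\beta}} \leq C\, (1-|a|)^{\gamma - 2\beta}.
\]
The coefficients $c_n$ will be calibrated to produce a $\nu$-mass of order $t_n^\gamma$ on the box $S(\zeta_n, t_n)$, while keeping the global $\gamma$-Carleson norm of $|u|^2 d\nu$ finite.

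For the selection, set $\varepsilon(t) := \sup_{\xi \in \partial\D} \nu[S(\xi,t)]/t^\gamma$. Vanishing Carleson yields $\varepsilon(t) \to 0$ as $t \to 0^+$, while the hypothesis \eqref{condition} forces $\varepsilon(t) > 0$ for every sufficiently small $t$. Inductively build a lacunary sequence $t_n \searrow 0$ (say $t_{n+1} \leq t_n^2$) together with $\zeta_n \in \partial\D$ realizing $m_n := \nu[S(\zeta_n, t_n)] \geq \tfrac12 \varepsilon(t_n)\, t_n^\gamma > 0$, and additionally require the strong smallness $t_n^\gamma \leq 2^{-n}\, \varepsilon(t_n)$ (possible precisely because $\varepsilon(t_n)$ is strictly positive). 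Put $\delta_n := m_n/t_n^\gamma$ and $c_n := t_n^\beta/\sqrt{\delta_n}$.

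Conditions $(i)$ and $(iii)$ then follow with relative ease. The Carleson embedding gives $\|c_n (1-\bar a_n \cdot)^{-\beta}\|_{L^2(\nu)}^2 \leq C\, t_n^\gamma/\delta_n \leq C\cdot 2^{-n}$, so the series converges absolutely in $L^2(\nu)$, proving $(i)$. For $(iii)$, the peak estimate $|1-\bar a_n z| \lesssim t_n$ for $z \in S(\zeta_n, t_n)$ yields $|c_n (1-\bar a_n z)^{-\beta}|^2 \gtrsim 1/\delta_n$ there, so the $n$-th atom alone contributes $\gtrsim m_n/\delta_n = t_n^\gamma$ to $\int_{S(\zeta_n, t_n)} |u|^2 d\nu$; the remaining atoms contribute only a small fraction thanks to the lacunary spacing of the $(t_n)$, giving a uniform lower bound $\delta\, t_n^\gamma$.

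The main obstacle is verifying $(ii)$, i.e., the uniform global bound $\int_{S(\xi,h)} |u|^2 d\nu \leq C\, h^\gamma$. The plan is to split the series at the scale $h$: atoms with $t_n < h$ are handled through their full $L^2(\nu)$ norm, the sum being dominated by $\sum 2^{-n}$ and matched against the Carleson smallness at scale $h$; atoms with $t_n \geq h$ are handled via the pointwise decay $|(1-\bar a_n z)^{-\beta}| \lesssim \max(t_n, |\xi - \zeta_n|)^{-\beta}$ on $S(\xi,h)$ combined with $\nu[S(\xi,h)] \leq \varepsilon(h)\, h^\gamma$. Tracking the interaction between the Cauchy-kernel decay of each atom and the geometry of an arbitrary Carleson box relative to the chosen $(S(\zeta_n, t_n))$ is the most delicate part of the argument; the rapid decay $t_n^\gamma \leq 2^{-n}\varepsilon(t_n)$ and the exponent condition $\beta > \gamma/2$ are precisely what absorb all the error terms.
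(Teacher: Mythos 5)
Your construction has a genuine gap, and it sits exactly where you flag the difficulty: property $(ii)$. Two distinct problems arise. First, the selection condition $t_n^\gamma \leq 2^{-n}\eps(t_n)$ is not achievable in general: the hypotheses give only that $\eps(t) > 0$ for each $t$ and $\eps(t) \to 0$, and nothing prevents $\eps$ from decaying faster than any power of $t$ (take $\nu = \sum_k c_k \, \delta_{(1-s_k)\zeta}$ with $s_k \downarrow 0$ and $c_k = \exp(-\exp(1/s_k))$, say). For such a measure $t^\gamma/\eps(t) \to \infty$ as $t \to 0$, so no admissible $t_n$ exists, and the bound $\|c_n(1-\bar a_n\cdot)^{-\beta}\|_{L^2(\nu)}^2 \leq C\, t_n^\gamma/\delta_n \leq C\,2^{-n}$ on which $(i)$ rests collapses. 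The parenthetical ``possible precisely because $\eps(t_n)$ is strictly positive'' conflates pointwise positivity with a quantitative lower bound.

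Second, and more fundamentally, normalizing the $n$-th atom by the mass of the single box $S(\zeta_n,t_n)$, with $\zeta_n$ maximizing $\nu[S(\cdot,t_n)]$ at the one scale $t_n$, does not control that atom on boxes at nearby \emph{larger} scales. A box $S(\zeta_n,2t_n)$ cannot be covered by boxes of size $t_n$: mass sitting at depth between $t_n$ and $2t_n$ is invisible to every box of size $t_n$, so the ratio $\nu[S(\zeta_n,2t_n)]/\sup_\xi \nu[S(\xi,t_n)]$ can be arbitrarily large. Since $|1-\bar a_n z| \asymp t_n$ on $S(\zeta_n,2t_n)$, the $n$-th atom alone contributes about $c_n^2\, t_n^{-2\beta}\, \nu[S(\zeta_n,2t_n)] = t_n^\gamma\, \nu[S(\zeta_n,2t_n)]/\nu[S(\zeta_n,t_n)]$ to $\int_{S(\zeta_n,2t_n)}|u|^2\,d\nu$; after division by $(2t_n)^\gamma$ this is unbounded, so $(ii)$ fails for your $u$. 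The fix is to choose $(\zeta_n,t_n)$ to (nearly) maximize $h^{-\gamma}\int_{S(\xi,h)}|\cdot|^2\,d\nu$ over \emph{all} centers and all scales $h \leq \beta_n$ simultaneously, so that $\delta_n$ dominates every larger-scale ratio and your dyadic summation closes. This multi-scale calibration is exactly what the paper's construction builds in by a different device: instead of Cauchy atoms it takes $v_n = (\eps_n/2C)\, F^{\delta_0}$, with $F$ a bounded peak function and $\delta_0$ fixed by an intermediate-value argument so that $\sup_{\xi,\, h\leq\beta_n} h^{-\gamma}\int_{S(\xi,h)}|v_n|^2\, d\nu$ equals a prescribed value exactly. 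Your overall architecture (one lacunary atom per scale, triangle inequality for $(ii)$ and $(iii)$) matches the paper's, but without that all-scales normalization the key step does not go through.
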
 
\begin{proof} [Proof of Theorem~\ref{theo decompact}]
It is plain that if $\| \phi \|_\infty < 1$, then $M_w C_\phi$ is compact for every weight $w \in {\mathfrak B}^2_\alpha $. In fact, if $\mu_{w, \phi}$ is the 
measure defined in \eqref{mesure mu}, then $\mu_{w, \phi} [S (\xi, h)] = 0$ for $0 < h < 1 - \| \phi \|_\infty$; hence Corollary~\ref{coro Carleson} gives 
the result. 
\smallskip

Conversely, assume that $\| \phi \|_\infty = 1$. 

Note that if $C_\phi \colon {\mathfrak B}^2_\alpha  \to {\mathfrak B}^2_\alpha $ is not compact, it suffices to take $w = \ind$; so we assume that 
$C_\phi$ is compact. Then $\nu := (A_\alpha )_\phi = \phi (dA_\alpha)$ is a vanishing $(\alpha + 2)$-Carleson measure. 

Since $\| \phi\|_\infty = 1$, condition~\eqref{condition} is satisfied. Set $\gamma = \alpha + 2$ and  $u$ be the holomorphic function given by 
Theorem~\ref{theo clef} and set $w = u \circ \phi$. We have
\begin{displaymath} 
\int_\D |w|^2 \, dA_\alpha = \int_\D |u \circ \phi|^2 \, dA_\alpha = \int_\D |u|^2 \, d\nu < \infty \, ;
\end{displaymath} 
so $w \in {\mathfrak B}^2_\alpha$. 

Now, for every $\xi \in \partial \D$ and $h \in [0, 1)$, we have, with $\mu = \varphi (|w|^2 dA_\alpha)$:
\begin{align*} 
\mu [S (\xi, h)] 
& = \int_{\phi^{- 1} [S (\xi, h)]} |w|^2 \, dA_\alpha = \int_\D (\ind_{S (\xi, h)} \circ \phi) \, |u \circ \phi|^2 \, dA_\alpha \\
& = \int_\D \ind_{S (\xi, h)} |u|^2 \, d\nu \, .
\end{align*} 
Hence the properties $(ii)$ and $(iii)$ of Theorem~\ref{theo clef} show that $\mu$ is a non-vanishing $(\alpha + 2)$-Carleson measure, and therefore that 
$M_w C_\phi \colon {\mathfrak B}^2_\alpha \to {\mathfrak B}^2_\alpha$ is bounded but not compact. 
\end{proof}
%

%%%%%%%%%%%%%%%%%%
\subsection {Proof of Theorem~\ref{theo clef}} \label{subsection proof} 

To prove Theorem~\ref{theo clef}, we need several auxiliary results. 

\begin{lemma} \label{lemma 1} 
For every $\omega \in \partial \D$ and $r \in (0, 1)$, there exists a bounded  analytic function $F \in H^\infty$ such that, for all $z \in \D$:
\begin{itemize}
\setlength\itemsep{-0.1 em}

\item [{\rm a)}] $\Re F (z) > 0$;

\item [{\rm b)}] $1 / 2 \leq | F (z) | \leq 2$; 

\item [{\rm c)}] $|F (z) | < 1$ when $| z - \omega| > r$; 

\item [{\rm d)}] $| F (z)| > 1$ when $| z - \omega | < r$.
\end{itemize}
\end{lemma}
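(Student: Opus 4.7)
The plan is to take the rescaled Möbius transformation
\[
F(z)=\frac{1}{2}\cdot\frac{\alpha-z}{\beta-z},
\qquad \alpha=(1+2r)\,\omega,\ \beta=\bigl(1+r/2\bigr)\,\omega,
\]
with $\alpha,\beta$ lying on the ray through $\omega$ just outside the unit disk. Since $|\alpha|,|\beta|>1$ the pole lies outside $\bar\D$, so $F$ is bounded on a neighbourhood of $\bar\D$ and in particular $F\in H^\infty$.

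The geometric key is that $\{|z-\omega|=r\}$ is the Apollonius circle for these foci with ratio $2$: squaring $|z-\alpha|=2|z-\beta|$ and completing the square yields precisely $|z-\omega|=r$. Since $\beta$ lies inside $\{|z-\omega|<r\}$, the region where $|z-\alpha|/|z-\beta|>2$ is exactly that smaller disk. As $|F(z)|=\tfrac12\,|z-\alpha|/|z-\beta|$, conditions (c) and (d) follow immediately.

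For (a) and the lower bound in (b), write $(\alpha-z)/(\beta-z)=1+(\alpha-\beta)/(\beta-z)$. Multiplying top and bottom by $\bar\omega$ gives $(\alpha-\beta)\bar\omega=3r/2>0$ and $\Re((\beta-z)\bar\omega)=1+r/2-\Re(z\bar\omega)>r/2$ for $z\in\D$, so the fraction has positive real part. Hence $(\alpha-z)/(\beta-z)\in\{\Re w>1\}$ and $F(\D)\subset\{\Re w>1/2\}$, which gives both $\Re F>1/2>0$ and $|F|>1/2$.

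For the upper bound $|F|\leq 2$ in (b), the function $\log(|\alpha-z|/|\beta-z|)$ is harmonic on a neighbourhood of $\bar\D$, so its supremum on $\bar\D$ is attained on $\partial\D$. Writing $z\bar\omega=e^{i\theta}$, a short derivative computation shows the ratio $|\alpha-z|^2/|\beta-z|^2$ is monotone increasing in $\cos\theta$, and hence maximal at $z=\omega$, where $|\alpha-\omega|/|\beta-\omega|=2r/(r/2)=4$; thus $|F|\leq 2$. The only non-routine step is the Apollonius identification with the correct foci and ratio; the four conditions then all follow by direct computation.
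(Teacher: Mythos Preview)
Your proof is correct and takes a genuinely different route from the paper's.

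The paper builds $F$ as $\exp\bigl(U\circ g\bigr)$, where $g$ is a branch of a square root of a M\"obius map sending $\D\cap D(\omega,r)$ to the upper half-plane and $U$ is another M\"obius map sending the relevant quadrants into the disk $D(0,1/2)$; the four properties then follow from $|F|=\exp\bigl(\Re(U\circ g)\bigr)$ and the sign of $\Re(U\circ g)$ inside and outside $D(\omega,r)$.

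Your construction is more elementary: a single linear fractional map with poles outside $\bar\D$, chosen so that the Apollonius circle for the foci $\alpha,\beta$ with ratio $2$ is exactly $\{|z-\omega|=r\}$. This dispenses with the square root and the exponential entirely; (c) and (d) become tautological, (a) and the lower half of (b) come from the observation that $(\alpha-z)/(\beta-z)-1$ has positive real part on $\D$, and the upper half of (b) is a maximum-modulus/monotonicity computation on $\partial\D$. The paper's approach is perhaps more conceptual (it is transparent why $|F|$ crosses $1$ exactly on $\partial D(\omega,r)$, since one is exponentiating something whose real part changes sign there), whereas yours is shorter and entirely explicit. Both give $F\in H^\infty$ with the required properties; yours in fact gives the sharper $\Re F>1/2$.
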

\begin{proof}
By composing with a rotation, we can, and do, assume that $\omega = 1$. 

Let $C = 1 - r$ and let $A$ and $B$ be the points of the intersection of the unit circle $\T = \partial \D$ with the circle of center $1$ and radius $r$, with 
$\Im A > 0$ and $\Im B < 0$. Consider the M\"obius transformation $T$ sending $A$ to $0$, $C$ to $1$, and $B$ to $\infty$. The images by $T$ of 
$\partial \D$ and $\partial D (1, r)$ are straight lines passing through $0$. In fact the image of $\partial D (1, r)$ is the extended real line 
$\R_\infty = \R \cup \{\infty\}$. Moreover $T [ D (1, r)]$ is the open upper half-plane. 

Define $ g (z) = \sqrt{\,T (z)}$, where $\sqrt{\phantom{z}}$ is the principal branch of the square root. 

Then, for $z \in \D$:
\begin{displaymath}
\left\{
\begin{array}{ll}
\arg \, [g (z)] \in (0, \pi / 2)  & \quad \text{if } z \in D (1 , r) \, ,\smallskip\\
\arg \, [g (z)] \in (- \pi / 2, 0)  & \quad \text{if } z \in \D \setminus \overline{D (1, r)} \, .
\end{array}
\right.
\end{displaymath}

Let now $U$ be the M\"obius transformation sending $0$ to $i/2$, $\infty$ to $- i /2$, and $1$ to $0$. We have
\begin{itemize}
\setlength\itemsep{-0.1 em}

\item [{\bf --}] $| U [g (z)]| < 1/2$ \quad for all $z \in \D$;

\item [{\bf --}] $\Re U [g (z)] > 0$ \quad for all $z \in \D \cap D (1, r) = S (1, r)$;

\item [{\bf --}] $\Re U [g (z)] < 0$ \quad for all $z \in \D \setminus \overline{D (1, r)}$.
\end{itemize}

Finally, the function $F$ defined as $F (z) = \exp U [ g (z)]$ suits.
\end{proof}
\begin{lemma} \label{lemma 2}
Let $\gamma\ge1$,  $\nu$ be a $\gamma$-Carleson measure on $\D$ and $F \in H^\infty$ such that $1/2 \leq | F (z)| \leq 2$ for all $z \in \D$. For given 
$\beta \in (0, 1]$, we define the function $\Phi \colon \R_+^\ast \to \R_+$ as:
\begin{displaymath}
\Phi (\delta) = \sup_{|\xi| = 1; \, 0 < h \leq \beta} \frac{1}{h^\gamma} \int_{S (\xi, h)} |F|^{2 \delta} \, d\nu \, , 
\quad \text{for all } \delta > 0 \, .
\end{displaymath}
Then $\Phi$ is continuous. 
\end{lemma}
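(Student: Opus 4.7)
The plan is to show that $\Phi$ is in fact locally Lipschitz on $(0, \infty)$, which will immediately yield continuity. The key input is that $|F|$ is pinched between $1/2$ and $2$, so $\log|F(z)|$ is bounded uniformly in $z \in \D$ by $\log 2$, and the map $\delta \mapsto |F(z)|^{2\delta} = \exp(2\delta \log|F(z)|)$ is therefore uniformly controlled in $z$.

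First, I would fix $\delta_0 > 0$ and work in the interval $I = [\max(\delta_0 - 1, 0), \delta_0 + 1]$. For any $\delta, \delta' \in I$ and any $z \in \D$, using the elementary estimate $|e^a - e^b| \leq |a - b| \, e^{\max(a,b)}$ applied to $a = 2\delta \log|F(z)|$ and $b = 2\delta' \log|F(z)|$, I would obtain
\begin{displaymath}
\bigl| |F(z)|^{2\delta} - |F(z)|^{2\delta'} \bigr| \leq 2 (\log 2) |\delta - \delta'| \cdot 2^{2(\delta_0 + 1)} =: K |\delta - \delta'| \, ,
\end{displaymath}
with $K$ depending only on $\delta_0$, not on $z$. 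This is the crucial uniform-in-$z$ Lipschitz estimate.

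Next, I would use that $\nu$ is a $\gamma$-Carleson measure: there is a constant $C_\nu$ such that $\nu[S(\xi, h)] \leq C_\nu h^\gamma$ for all $\xi \in \partial \D$ and $0 < h \leq \beta$ (recall $\beta \leq 1$). Integrating the above pointwise bound against $\nu$ on any Carleson window gives
\begin{displaymath}
\frac{1}{h^\gamma} \biggl| \int_{S(\xi, h)} |F|^{2\delta} \, d\nu - \int_{S(\xi, h)} |F|^{2\delta'} \, d\nu \biggr| \leq K |\delta - \delta'| \cdot \frac{\nu[S(\xi,h)]}{h^\gamma} \leq K C_\nu |\delta - \delta'| \, .
\end{displaymath}
Taking the supremum over $\xi$ and $0 < h \leq \beta$ on both sides and using the elementary inequality $|\sup f - \sup g| \leq \sup |f - g|$, I would conclude $|\Phi(\delta) - \Phi(\delta')| \leq K C_\nu |\delta - \delta'|$ for $\delta, \delta' \in I$. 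Hence $\Phi$ is Lipschitz on $I$ and therefore continuous at $\delta_0$; since $\delta_0$ is arbitrary, $\Phi$ is continuous on $(0, \infty)$.

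There is no real obstacle here: the only thing to watch is that the Lipschitz bound on the integrand be independent of $z$ (so that it survives the integration against $\nu$) and independent of $\xi, h$ (so that it survives the supremum). Both follow automatically from the two-sided bound $1/2 \leq |F| \leq 2$ and the Carleson property of $\nu$. Finiteness of $\Phi(\delta)$ itself — which is implicit in the statement — also follows from these two ingredients, as $|F|^{2\delta} \leq 2^{2\delta}$ is bounded.
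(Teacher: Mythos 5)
Your proof is correct. It takes a genuinely different, though closely related, route from the paper's. The paper exploits the two-sided bound on $|F|$ \emph{multiplicatively}: writing $|F|^{2\delta + 2t} = |F|^{2\delta}\,|F|^{2t}$ and using $4^{-|t|} \leq |F|^{2t} \leq 4^{|t|}$, it sandwiches each normalized integral between $4^{-|t|}$ and $4^{|t|}$ times the one at exponent $\delta$; this passes through the supremum by monotonicity alone and yields $4^{-|t|}\,\Phi(\delta) \leq \Phi(\delta + t) \leq 4^{|t|}\,\Phi(\delta)$, whence continuity (the Carleson property is needed only to guarantee $\Phi(\delta) < \infty$). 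You instead prove an \emph{additive} Lipschitz bound on the integrand, uniform in $z$, via the mean value theorem for the exponential, and then must invoke the Carleson bound $\nu[S(\xi,h)] \leq C_\nu h^\gamma$ a second time to control the normalized integral of the constant $K|\delta - \delta'|$, finishing with $|\sup f - \sup g| \leq \sup|f - g|$. Both arguments deliver local Lipschitz continuity with comparable effort; the paper's is marginally leaner in that the Carleson property enters only once, while yours makes the quantitative modulus of continuity fully explicit. One small point of care, which you handle correctly: the sup-difference inequality requires the suprema to be finite, and you note that finiteness of $\Phi$ follows from the same two ingredients.
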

\goodbreak

\begin{proof}
First, we have $\Phi (\delta) < + \infty$ for all $\delta > 0$ because $\nu$ is a $\gamma$-Carlseon measure; indeed, for all $\xi \in \partial \D$ 
and all $h \in (0, 1]$:
\begin{displaymath}
\frac{1}{h^\gamma} \int_{S (\xi, h)} |F|^{2 \delta} \, d\nu \leq 4^\delta \, \frac{\nu [S (\xi, h)]}{h^\gamma} \leq C\, 4^\delta < + \infty \, .
\end{displaymath}

Now, observe that, since $1/2 \leq |F (z)| \leq 2$, we have, for all $h \in (0, 1]$, all $\xi \in \partial \D$, and all $t \in \R$:
\begin{displaymath}
\frac{1}{4^{|t|}} \, \frac{1}{h^\gamma} \int_{S (\xi, h)} |F|^{2 \delta} \, d\nu 
\leq \frac{1}{h^\gamma} \int_{S (\xi, h)} |F|^{2 \delta + 2 t} \, d\nu 
\leq 4^{|t|} \, \frac{1}{h^\gamma} \int_{S (\xi, h)} |F|^{2 \delta} \, d\nu \, .
\end{displaymath}
Taking the supremum, we get
\begin{displaymath}
4^{- |t|} \, \Phi (\delta) \leq \Phi (\delta + t) \leq 4^{|t|} \, \Phi (\delta) \, ,
\end{displaymath}
and that proves the continuity of $\Phi$, since $\Phi (\delta) < + \infty$.
\end{proof}
\begin{proposition} \label{prop 3} 
Let $\nu$ be a finite $\gamma$-Carleson measure on $\D$ with property \eqref{condition}. 
Then, for every $\beta \in (0, 1]$ and every $\eps \in (0, 1)$, there exists a function $v \in H^\infty$ satisfying:
\begin{itemize}
\setlength \itemsep{-0.1 em} 

\item [{\rm (a)}] $| v (z)| < \eps$ for all $z \in \D$ such that  $|z| < 1 - \beta$;

\item [{\rm (b)}] $\displaystyle \frac{1}{h^\gamma} \int_{S (\xi, h)} |v|^2 \, d\nu \leq 1$ for all $h \in (0, 1]$ and all $\xi \in \partial\D$;

\item [{\rm (c)}] $\displaystyle \frac{1}{h^\gamma} \int_{S (\xi, h)} |v|^2 \, d\nu \leq \eps^2$ for all $h \in (\beta, 1]$ and all $\xi \in \partial\D$;

\item [{\rm (d)}] there exists $t \in (0, \beta]$ and $\zeta \in \partial \D$ such that
\begin{displaymath}
\frac{1}{t^\gamma} \int_{S (\zeta, t)} |v|^2 \, d\nu \geq \Big( \frac{3}{4} \Big)^2 \, . \qquad \phantom{bla}
\end{displaymath}
\end{itemize}
\end{proposition}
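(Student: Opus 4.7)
The plan is to take $v = \lambda F^{\delta}$, where $F\in H^\infty$ is produced by Lemma~\ref{lemma 1} applied to a point $\omega\in\partial\D$ and a radius slightly smaller than a chosen $r\in(0,\beta]$, and where $\lambda > 0$, $\delta > 0$ are tuned at the end. Since $\Re F > 0$, the power $F^{\delta}:=\exp(\delta\log F)$ is well defined in $H^\infty$ with $|F^{\delta}|=|F|^{\delta}$. Heuristically, as $\delta\to\infty$ the weight $|F|^{2\delta}$ concentrates the mass of $\nu$ into the tiny lens where $|F|>1$, and kills it outside, where $|F|<1$. Taking $r\ll\beta$ will then force the Carleson density on large boxes ($h>\beta$) to be negligible (condition (c)), while the lens captures essentially the full Carleson density, giving (d); condition (a) comes for free because $|F|<1$ uniformly on the compact centre $\{|z|\le 1-\beta\}$, and (b) is enforced by the choice of~$\lambda$.

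Concretely, fix $r\in(0,\beta]$ with $(r/\beta)^{\gamma}\le\eps^{2}/2$, use property~\eqref{condition} with $t=r/2$ to pick $\omega$ with $\nu[S(\omega,r/2)]>0$, and apply Lemma~\ref{lemma 1} with parameters $(\omega,r/2)$ to obtain $F$; then $|F|<1$ holds strictly on the closed set $\{|z-\omega|\ge r\}$. Set
\[
m(\delta)=\sup_{\xi,\,0<h\le\beta}\frac{1}{h^{\gamma}}\int_{S(\xi,h)}|F|^{2\delta}\,d\nu,\qquad M(\delta)=\sup_{\xi,\,\beta<h\le 1}\frac{1}{h^{\gamma}}\int_{S(\xi,h)}|F|^{2\delta}\,d\nu.
\]
Monotone convergence applied to the sets $\{|F|\ge 1+1/n\}\cap S(\omega,r/2)$, which increase to $S(\omega,r/2)$, yields $\eta>0$ and a Borel set $E\subseteq S(\omega,r/2)$ with $\nu(E)>0$ and $|F|\ge 1+\eta$ on $E$; hence $m(\delta)\ge(r/2)^{-\gamma}(1+\eta)^{2\delta}\nu(E)\to\infty$. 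On the other hand, splitting $\int_{S(\xi,h)}$ into parts inside and outside $S(\omega,r)$: the inside part is bounded by $\int_{S(\omega,r)}|F|^{2\delta}\,d\nu\le r^{\gamma}m(\delta)$ (take $\xi=\omega$, $h=r$ in the definition of $m$), while the outside part is at most $\int_{\{|z-\omega|\ge r\}}|F|^{2\delta}\,d\nu\to 0$ as $\delta\to\infty$ by dominated convergence (dominant $1\in L^{1}(\nu)$, with $|F|^{2\delta}\to 0$ pointwise there). Dividing by $h^{\gamma}\ge\beta^{\gamma}$ gives $M(\delta)/m(\delta)\le(r/\beta)^{\gamma}+o_{\delta}(1)$.

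Choose $\delta$ large enough that $M(\delta)/m(\delta)\le\eps^{2}$ and set $\lambda=1/\sqrt{m(\delta)}$, $v=\lambda F^{\delta}$. Then $\lambda^{2}m(\delta)=1$ gives (b) on boxes of size $\le\beta$, while $\lambda^{2}M(\delta)\le\eps^{2}<1$ gives (c) and also (b) on larger boxes. For (d), since $(3/4)^{2}<1$, the definition of $m(\delta)$ as a supremum furnishes $\zeta\in\partial\D$ and $t\in(0,\beta]$ with $t^{-\gamma}\int_{S(\zeta,t)}|F|^{2\delta}\,d\nu\ge(3/4)^{2}\,m(\delta)$; multiplication by $\lambda^{2}$ yields the stated bound. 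For (a), if $|z|\le 1-\beta$ then $|z-\omega|\ge\beta>r/2$, so by compactness $c:=\max_{|z|\le 1-\beta}|F(z)|<1$; then $|v(z)|\le c^{\delta}/\sqrt{m(\delta)}\le(r/2)^{\gamma/2}\nu(E)^{-1/2}(c/(1+\eta))^{\delta}\to 0$ since $c<1\le 1+\eta$, and enlarging $\delta$ once more secures $|v|<\eps$ on that set.

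The decisive step is the concentration estimate $M(\delta)/m(\delta)\to(r/\beta)^{\gamma}$ of the second paragraph: it rests on the pointwise dichotomy $|F|>1$ vs.\ $|F|<1$ from Lemma~\ref{lemma 1}, the uniform bound $|F|\le 2$, and the finiteness of $\nu$ providing the $L^{1}$-dominant on the complement of the lens. Once this estimate is secured, the verification of (a)--(d) is essentially an algebraic rearrangement and a choice of sufficiently large~$\delta$.
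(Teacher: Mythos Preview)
Your argument is correct and follows the same essential strategy as the paper: take $v=\lambda F^{\delta}$ with $F$ the function of Lemma~\ref{lemma 1}, and tune $\lambda$ and $\delta$ so that the normalized Carleson integrals sit in the right ranges.

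There is one difference worth recording. The paper uses Lemma~\ref{lemma 2} (continuity of $\delta\mapsto\Phi(\delta)$) together with $\Phi(1)\le 4C$ and $\Phi(\delta)\to\infty$ to hit the \emph{exact} level $\Phi(\delta_0)=(2C/\eps)^2$, and then sets $v=(\eps/2C)\,F^{\delta_0}$; the Carleson constant $C$ enters explicitly in the verification of (c). You bypass Lemma~\ref{lemma 2} entirely: you normalize by $\lambda=1/\sqrt{m(\delta)}$ (so (b) and (d) become automatic from the definition of $m$ as a supremum), and you handle (c) via the ratio estimate $M(\delta)/m(\delta)\le (r/\beta)^{\gamma}+o_{\delta}(1)$, obtained by splitting at $S(\omega,r)$ and applying dominated convergence on the complement (where $|F|<1$ and $\nu$ is finite). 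This is a mild simplification: it replaces the intermediate-value argument by an asymptotic one and removes any need to track the Carleson constant, at the cost of choosing $\delta$ ``sufficiently large'' rather than pinning it down exactly. The additional use of $r/2$ versus $r$ in your application of Lemma~\ref{lemma 1} is immaterial; it only serves to give you a clean closed set $\{|z-\omega|\ge r\}$ on which $|F|<1$ strictly, so that compactness on $\{|z|\le 1-\beta\}$ yields (a).
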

\begin{proof}
Since $\nu$ is a $\gamma$-Carleson measure, there exists a positive constant $C$ (and we can and do assume that $C \geq 1$) such that:
\begin{equation} \label{2-Carleson}
\qquad\qquad\qquad  \nu [ S (\xi, h)] \leq C \, h^\gamma \, , \qquad \forall h \in (0, 1] \, , \ \forall \xi \in \partial\D \, .
\end{equation}

Take $r = \beta \,(\eps^2 /2)^{1 / \gamma}$. 
\smallskip

By \eqref{condition}, there exists $\omega \in \partial \D$ such that
\begin{displaymath}
\nu [S (\omega, r)] > 0 \, .
\end{displaymath}

Let $F$ be the function given by Lemma~\ref{lemma 1}. 

We define:
\begin{displaymath}
\Phi (\delta) = \sup_{0 < h \leq \beta, \, |\xi | = 1} \frac{1}{h^\gamma} \int_{S (\xi, h)} |F|^{2 \delta} \, d\nu \, .
\end{displaymath}
Thanks to \eqref{2-Carleson}, we have, for all $\xi \in \partial \D$ and all $h \in (0, \beta]$:
\begin{displaymath}
\frac{1}{h^\gamma} \int_{S (\xi, h)} |F|^{2} \, d\nu \leq 4 \, \frac{\nu [S (\xi, h)]}{h^\gamma} \leq 4 \, C \, ,
\end{displaymath}
and we get $\Phi (1) \leq 4 \, C$.

On the other hand, for all $\delta > 0$:
\begin{displaymath}
\Phi (\delta) \geq \frac{1}{r^\gamma} \int_{S (\omega, r)} |F|^{2 \delta} \, d \nu \, .
\end{displaymath}
Since $|F (z)| > 1$ for $z \in S (\omega, r)$ and $\nu [S (\omega, r)] > 0$, we get
\begin{displaymath}
\lim_{\delta \to + \infty} \int_{S (\omega, r)} |F|^{2 \delta} \, d \nu = + \infty \, ,
\end{displaymath}
and consequently $\lim_{\delta \to + \infty} \Phi (\delta) = + \infty$. Since $\Phi (1) \leq 4 \, C < (2 C/ \eps)^2$ and, thanks to 
Lemma~\ref{lemma 2}, $\Phi$ is continuous, there exists $\delta_0 > 1$ such that $\Phi (\delta_0) = (2 C/ \eps)^2$. 

Define
\begin{displaymath}
v = (\eps/2 C) \, F^{\delta_0} \, .
\end{displaymath}

Observe that $r < \beta$; so $|z| < 1 - \beta$ implies $|z| <  1 - r$; hence $z \notin \overline{S (\omega, r)}$ and $|F (z)| < 1$. That means that 
$|v (z)| < \eps / (2 C) < \eps$, and we have proved (a). 

By definition of $\delta_0$, (b) is satisfied for all $h \in (0, \beta]$. It will be satisfied as well for $h \in (\beta, 1]$ once we have proved (c).

Let us prove (c). Take $\beta < h \leq 1$. Since $|F (z)| \leq 1$ for $z \in S (\xi, h) \setminus S (\omega, r)$, we have:
\begin{align*}
\int_{S (\xi, h)} |v|^2 \, d \nu 
& \leq \int_{S (\xi, h) \setminus S (\omega, r)} \Big( \frac{\eps}{2C}\Big)^2 \, d \nu 
+ \Big( \frac{\eps}{2 C}\Big)^2 \int_{S (\omega, r)} |F|^{2 \delta_0} \, d \nu \\
& \leq \Big( \frac{\eps}{2 C}\Big)^2 \, \nu [S (\xi, h)] + \Big( \frac{\eps}{2C}\Big)^2 r^\gamma \Phi (\delta_0) \\
& \leq  \Big( \frac{\eps}{2 C}\Big)^2 \, C \, h^\gamma + r^\gamma 
= \Big( \frac{\eps}{2 C}\Big)^2 \, C \, h^\gamma + \beta^\gamma \frac{\eps^2}{2\ }\\
& \leq \Big( \frac{\eps}{2 C}\Big)^2 \, C \, h^\gamma + h^\gamma \frac{\eps^2}{2\ } 
\leq \bigg( \frac{\eps^2}{4 \ } + \frac{\eps^2}{2 \ } \bigg) \, h^\gamma \leq \eps^2 h^\gamma \, .
\end{align*}

Finally, by definition of $\delta_0$, there exist $t \in (0, \beta]$ and $\zeta \in \partial \D$ such that
\begin{displaymath}
\frac{1}{t^\gamma} \int_{S (\zeta, t)} |v|^2 \, d \nu = \Big( \frac{\eps}{2 C} \Big)^2 \frac{1}{t^\gamma} \int_{S (\zeta, t)} |F|^{2 \delta_0} \, d\nu 
\geq \Big( \frac{3}{4}\Big)^2 \, , 
\end{displaymath}
and (d) if proved.
\end{proof}
\begin{proof} [Proof of Theorem~\ref{theo clef}] 
Consider a sequence $(\eps_n)_{n \geq 1}$ of positive numbers such that $\sum_{n = 1}^\infty \eps_n < 1 / 4$. 
\smallskip

Using Proposition~\ref{prop 3}, we are going to construct by induction four sequences $(v_n)_n$ in $H^\infty$, $(\beta_n)_n$, with $\beta_1 = 1$, and 
$(t_n)_n$ in $(0, 1]$, and $(\zeta_n)_n$ in $\partial \D$ such that, for all $n \geq 1$:
\goodbreak
\begin{itemize}
\setlength \itemsep{-0.1 em}

\item [{\rm (S\,1)}] $\beta_n \geq t_n > \beta_{n + 1} \geq t_{n + 1}$;

\item [{\rm (S\,2)}] $|v_n (z)| < \eps_n$ for $|z| < 1 - \beta_n$;

\item [{\rm (S\,3)}] for all $\xi \in \partial \D$ and all $h \in (0, \beta_{n + 1}] \cup [\beta_n, 1]$:
\begin{displaymath}
\frac{1}{h^\gamma} \int_{S (\xi, h)} |v_n|^2 \, d \nu \leq \eps_n^2 \, ; \phantom{blablabla} 
\end{displaymath}

\item [{\rm (S\,4)}] $\dis \frac{1}{h^\gamma} \int_{S (\xi, h)} |v_n|^2 \, d \nu \leq 1$ for all $h \in (\beta_{n + 1}, \beta_n]$ and all $\xi \in \partial \D$;

\item [{\rm (S\,5)}] $\dis \frac{1}{t_n^\gamma} \int_{S (\zeta_n, t_n)} |v_n|^2 \, d \nu \geq \Big( \frac{3}{4} \Big)^2$, 
\end{itemize}
and
\begin{itemize}
\item [{\rm (S\,6)}] $\dis \lim_{n \to \infty} \beta_n = \lim_{n \to \infty} t_n = 0$.
\end{itemize}

Take $\beta_1 = 1$. With $\beta = \beta_1$ and $\eps = \eps_1$, let $v_1 = v$ be the function given by Proposition~\ref{prop 3} and 
$\zeta_1 =\zeta$ and $t_1 = t \leq \beta_1$ the numbers given by part (d) of that proposition. By Proposition~\ref{prop 3} (b) and (d) respectively, 
conditions {\rm (S\,4)} and {\rm (S\,5)} are satisfied for $n = 1$. Condition {\rm (S\,2)} is void for $n = 1$. For condition {\rm (S\,3)}, note that 
since $\nu$ is a vanishing $\gamma$-Carleson measure, there exists $\beta_2 > 0$ such that
\begin{displaymath}
\frac{\nu [ S (\xi, h)]}{h^\gamma} \leq \eps_1^2 (1 + \|v_1\|_\infty^2)^{- 1} 
\end{displaymath}
for all $h \in (0, \beta_2]$ and all $\xi \in \partial \D$. This implies, for these $h$'s and $\xi$'s:
\begin{displaymath}
\frac{1}{h^\gamma} \int_{S (\xi, h)} |v_1|^2 \, d \nu \leq \frac{\|v_1 \|_\infty^2 \, \nu [ S (\xi, h)]}{h^\gamma} \leq \eps_1^2 \, .
\end{displaymath}
It follows, with (c) of Proposition~\ref{prop 3}, that {\rm (S\,3)} is satisfied for $n = 1$. 

We can of course ask that $\beta_2 \leq 1/2$. 

Now, assume that $v_1, \ldots, v_{n + 1}$, $\beta_1, \ldots, \beta_{n + 1}$, $t_1, \ldots, t_{n + 1}$ and $\zeta_1, \ldots, \zeta_{n + 1}$ satisfying 
{\rm (S\,1)}, {\rm (S\,2)}, {\rm (S\,3)}, {\rm (S\,4)} and {\rm (S\,5)} have been constructed. 

As above, since $\nu$ is a vanishing $\gamma$-Carleson measure, there exists a positive number $\beta_{n + 2} \leq \min (\beta_{n + 1}, 1 / (n + 2))$ such that
\begin{displaymath}
\frac{\nu [ S (\xi, h)]}{h^\gamma} \leq \eps_n^2 (1 + \| v_{n + 1} \|_\infty^2)^{- 1} 
\end{displaymath}
for all $h \in (0, \beta_{n + 2}]$ and all $\xi \in \partial \D$. Using Proposition~\ref{prop 3} with $\beta = \beta_{n + 2}$ and $\eps  =\eps_{n + 1}$, 
we get $v_{n + 2} = v \in H^\infty$, $\zeta_{n + 2} = \zeta \in \partial \D$ and $t_{n + 2} = t \in (0, \beta_{n + 2}]$ and the induction step follows. 
\smallskip

We now set:
\begin{displaymath}
\quad  u (z) = \sum_{n = 1}^\infty v_n (z) \, , \quad z \in \D \, . 
\end{displaymath}
Thanks to {\rm (S\,2)}, this series converges uniformly on compact subsets of $\D$, so $u$ is analytic in $\D$.  

Take $\xi \in \partial\D$ and $h \in (0, 1]$. There exists a unique $n \geq 1$ such that $h \in (\beta_{n + 1}, \beta_n]$. By the triangle inequality:
\begin{align*}
\bigg( \frac{1}{h^\gamma} \int_{S (\xi, h)} |u|^2 \, d\nu \bigg)^{1/2} 
\leq \sum_{\substack{k = 1 \\ k \neq n}}^\infty \bigg( \frac{1}{h^\gamma} \int_{S (\xi, h)} & |v_k|^2 \, d\nu \bigg)^{1/2} \\ 
& + \bigg( \frac{1}{h^\gamma} \int_{S (\xi, h)} |v_n|^2 \, d\nu \bigg)^{1/2} \, .
\end{align*}
Now, since:
\begin{equation} \label{remarque utile}
\qquad  h \in (0, \beta_{k + 1}] \cup (\beta_k, 1] \quad \text{for every } k \neq n \, ,
\end{equation} 
we get, by {\rm (S\,3)} and {\rm (S\,4)}:
\begin{displaymath}
\bigg( \frac{1}{h^\gamma} \int_{S (\xi, h)} |u|^2 \, d\nu \bigg)^{1/2} \leq \bigg( \sum_{k \neq n} \eps_k \bigg) 
+ 1 \leq \frac{1}{4} + 1 = \frac{5}{4} \, \cdot
\end{displaymath}
Consequently, $u$ satisfies $(ii)$ of Theorem~\ref{theo clef}. Then condition $(ii)$ implies $(i)$ because $\nu$ is a finite measure, $u$ is 
bounded on $(1/2) \, \D$, and $\D \setminus (1/2)\, \D$ can be covered by a finite number of boxes $S (\xi, 1)$, with $\xi \in \partial \D$. 
\smallskip

To obtain $(iii)$, we use \eqref{remarque utile} again, with $h = t_n$, to get:
\begin{align*}
\bigg(\frac{1}{t_n^\gamma} \int_{S (\zeta_n, t_n)} |u|^2 \, d \nu \bigg)^{1/2} 
& \geq \bigg(\frac{1}{t_n^\gamma} \int_{S (\zeta_n, t_n)} |v_n|^2 \, d \nu \bigg)^{1/2} \\
& \qquad \qquad \qquad - \sum_{k \neq n} \bigg(\frac{1}{t_n^\gamma} \int_{S (\zeta_n, t_n)} |v_k|^2 \, d \nu \bigg)^{1/2} \\
& \geq \frac{3}{4} - \sum_{k \neq n} \eps_k \geq \frac{3}{4} - \frac{1}{4} = \frac{1}{2} \, \raise 1 pt \hbox{,} 
\end{align*}
and we have $(iii)$.
\end{proof}
%
%%%%%%%%%%%%%%%%%%%%%%%%%%%%%%%%%%%%%%%%%%%%%%%%%%%%%%%%%%%%%%%%%%%%%%%%%%%%
\section {Hilbert-Schmidt regularization} \label{section HS}

We remarked in Section~\ref{section compactif} that if $\phi (z) = \frac{1 + z}{2}\,$, then $C_\phi$ is compactifiable on ${\mathfrak B}^2_\alpha$ 
by a weight in $H^\infty$. Actually, since $|\phi (\e^{it})| = \cos (t/2)$, we have 
$\int_{- \pi}^\pi \log \frac{1}{1 - |\phi (\e^{it}) |} \, dm (t) < \infty$, 
and \cite[Theorem~4.1]{LLQR-compactification} tells that the composition operator $C_\phi$ can be weighted to have a Hilbert-Schmidt operator on $H^2$; 
a fortiori, this weighted composition operator is Hilbert-Schmidt on ${\mathfrak B}^2_\alpha$ (see \cite[Theorem~3.12]{LLQR-comparison}). 
We can be more specific on an example, but unfortunately this example shows no difference between the Hardy and Bergman spaces.
\begin{proposition}\label{rate}
 Let $\phi \colon \D \to \D$ be defined by $\varphi(z) = \frac{1 + z}{2}$, and let $w (z) = (1 - z)^{\beta}$ with $\beta > - 1/2 $, so that $w \in H^2$.  
Then the  weighted composition operators $M_w C_\phi \colon H^2 \to H^2$ and  $M_w C_\phi \colon {\mathfrak B}^2 \to {\mathfrak B}^2$ are 
Hilbert-Schmidt if and only if $\beta > 1/2$. 
\end{proposition}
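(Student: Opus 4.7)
The plan is to reduce the Hilbert--Schmidt problem in each space to the convergence of an explicit integral. Expanding in the monomial orthonormal basis of $H^2$ and in the basis $\sqrt{n+1}\,z^n$ of ${\mathfrak B}^2$, and summing the pointwise geometric-type series, I would first derive the standard identities
\begin{displaymath}
\| M_w C_\phi \|_{HS(H^2)}^2 = \int_\T \frac{|w(\e^{it})|^2}{1 - |\phi(\e^{it})|^2}\, dm(t)
\end{displaymath}
and
\begin{displaymath}
\| M_w C_\phi \|_{HS({\mathfrak B}^2)}^2 = \int_\D \frac{|w(z)|^2}{(1 - |\phi(z)|^2)^2}\, dA(z).
\end{displaymath}

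For the Hardy space, $\phi(\e^{it}) = \e^{it/2}\cos(t/2)$ gives $1 - |\phi(\e^{it})|^2 = \sin^2(t/2)$, and $|1 - \e^{it}|^{2\beta} = 2^{2\beta}|\sin(t/2)|^{2\beta}$, so the integral collapses to a constant multiple of $\int_{-\pi}^\pi |\sin(t/2)|^{2\beta - 2}\, dt$. This is finite if and only if $2\beta - 2 > -1$, that is, $\beta > 1/2$.

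For the Bergman case, I would change variables $u = \phi(z) = (1+z)/2$, which maps $\D$ onto $D(1/2, 1/2)$, the disk internally tangent to $\T$ only at $u = 1$. Since $|w(z)| = 2^\beta |1 - u|^\beta$, the integral becomes, up to a constant,
\begin{displaymath}
\int_{D(1/2, 1/2)} \frac{|1 - u|^{2\beta}}{(1 - |u|^2)^2}\, dA(u).
\end{displaymath}
Polar coordinates around the tangent point, $u = 1 - re^{i\psi}$, give $|1-u| = r$, $1-|u|^2 = r(2\cos\psi - r)$, and the domain $\psi \in (-\pi/2,\pi/2)$, $0 < r < \cos\psi$. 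The further substitution $r = s\cos\psi$, $s \in (0,1)$, then separates the integral as a product
\begin{displaymath}
\Bigl( \int_0^1 \frac{s^{2\beta - 1}}{(2 - s)^2}\, ds \Bigr) \Bigl( \int_{-\pi/2}^{\pi/2} (\cos\psi)^{2\beta - 2}\, d\psi \Bigr),
\end{displaymath}
whose first factor is finite iff $\beta > 0$ and whose second factor is finite iff $\beta > 1/2$; the conjunction is $\beta > 1/2$.

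The main obstacle is not technical but rather conceptual: in the Bergman case one must identify the two independent mechanisms of possible divergence, namely the concentration of $|1-u|^{2\beta}$ near the tangent point $u = 1$, and the tangential decay of $1 - |u|^2$ controlled by $\cos\psi$ as $\psi \to \pm \pi/2$. The substitution $r = s\cos\psi$ is the key device that decouples these two scales and reduces the problem to two elementary Beta-type convergence checks, after which the ``if and only if'' in both the $H^2$ and ${\mathfrak B}^2$ cases reads off at once.
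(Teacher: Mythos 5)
Your proof is correct and follows essentially the same route as the paper: after reducing to the integral $\int_\D |w|^2(1-|\phi|^2)^{-2}\,dA$, you pass to polar coordinates at the tangency point and use the substitution $r=s\cos\psi$ to decouple the radial and angular singularities into a product of two Beta-type integrals, exactly as the paper does (your preliminary affine change of variables $u=\phi(z)$ is only cosmetic, and your direct computation of the $H^2$ case replaces the paper's citation of \cite[Proposition~2.4]{GDHL} with the same standard calculation).
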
 
\begin{proof}
The first item was proved in \cite[Proposition~2.4]{GDHL}. For the second item, we have to determine those $\beta$ such that
\begin{displaymath} 
\int_{\D} \frac{|w (z)|^2}{(1 - |\varphi (z)|^2)^2} \, dA (z) < \infty \, . 
\end{displaymath} 
Since $|\varphi(z)|$ approaches $1$ only when $z$ approaches $1$, we can as well consider
\begin{displaymath} 
I := \int_{\Delta} \frac{|w (z)|^2}{(1 - |\varphi(z)|^2)^2} \, dA (z) \, ,
\end{displaymath} 
where $\Delta = \D\cap D (1, 1)$. Passing in polar coordinates centered at $1$, we write, for $z\in \Delta$: $z = 1 - r \, \e^{i \theta}$ with $|\theta| < \pi/2$ 
and  $r < 2 \cos \theta$. Then, $\big|\frac{1 + z}{2} \big|^2 = 1 + \frac{r^2}{4} - r\cos \theta$ and
\begin{displaymath} 
I = \int_{- \pi/2}^{\pi/2} \int_{0}^{2 \cos \theta} \!\!\! \frac{r^{2 \beta + 1}}{r^2 (\cos \theta - r/4)^2} \, dr \, \frac{d\theta}{\pi} 
= \frac{2}{\pi} \int_{0}^{\pi/2} \int_{0}^{2 \cos \theta} \!\!\! \frac{r^{2 \beta - 1}}{(\cos \theta - r/4)^2} \, dr \, d\theta \, .
\end{displaymath} 
Making the change of variable $r = 2 t \cos \theta$, $0 \leq t \leq 1$ in the inner integral and observing that $1 \geq 1 - t/2 \geq 1/2$, we see that
\begin{displaymath} 
I \approx \int_{0}^{\pi/2} \int_{0}^{1} \frac{t^{2 \beta - 1} (\cos \theta)^{2 \beta}}{\cos^2 \theta} \, dt \, d\theta
= \bigg( \int_{0}^{1} t^{2 \beta - 1} \, dt \bigg) \, \bigg(\int_{0}^{\pi/2} (\sin \theta)^{2 \beta - 2} \, d\theta \bigg) \, . 
\end{displaymath} 
So, clearly, $I < \infty$ if and only if $\beta > 1/2$. 
\end{proof}

Fortunately, examples showing the difference between Hardy and Bergman spaces exist.
\goodbreak
\begin{theorem} \label{lent}
There exists a Blaschke product $B$ which can be Hilbert-Schmidt regularized, and more, on $\mathfrak{B}^2$, but not on $H^2$. 
\end{theorem}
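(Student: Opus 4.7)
The plan is to split the statement into an easy negative direction on $H^2$ and a constructive positive direction on $\mathfrak{B}^2$. Both rely on the standard Hilbert--Schmidt identities obtained by summing $\sum_n \|(M_w C_\phi) e_n\|^2$ against the monomial basis of each space.

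For the $H^2$ direction, this identity gives
\begin{displaymath}
\|M_w C_\phi\|_{HS}^2 = \int_\T \frac{|w|^2}{1 - |\phi|^2} \, dm.
\end{displaymath}
When $\phi = B$ is a Blaschke product, $|B| = 1$ almost everywhere on $\T$ because $B$ is inner, while a nonzero $w \in H^2$ satisfies $\log |w| \in L^1(m)$ and is therefore nonzero $m$-a.e. The integrand is thus $+\infty$ on a set of full measure, the integral diverges, and \emph{no} Blaschke product can be Hilbert--Schmidt regularized on $H^2$. This half holds for every Blaschke product and uses no specific choice.

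For the $\mathfrak{B}^2$ direction, the analogous identity is
\begin{displaymath}
\|M_w C_\phi\|_{HS}^2 = \int_\D \frac{|w|^2}{(1 - |\phi|^2)^2} \, dA,
\end{displaymath}
where the integration now takes place in the interior of the disk, so the boundary relation $|B| = 1$ on $\T$ is irrelevant; only the rate at which $|B(z)|$ approaches $1$ as $|z| \to 1^-$ matters. I would construct $B$ as a thin Blaschke product whose zero sequence $(a_n)$ is chosen so that (i) $\mathcal{AD}(B)$ has null Lebesgue measure on $\T$, necessary by Theorem~\ref{compactif} for any regularization to even be compact; and (ii) a companion analytic weight $w \in \mathfrak{B}^2$ can be found that vanishes fast enough wherever $|B|$ comes close to $1$ inside $\D$ to render the integral above finite. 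A natural attempt is to place the zeros on a Carleson-type grid $a_{j,k} = (1 - r_j)\, \e^{2 \pi i k / N_j}$ with $\sum_j N_j r_j < \infty$, and to take $w$ as a product of Blaschke-type factors $\prod_{j,k}(1 - \overline{a_{j,k}}\, z)^{\beta_{j,k}}$ with exponents tuned to compensate for the blow-up of $1/(1 - |B|)$ near the boundary.

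The main obstacle is the quantitative estimate of $|B(z)|$ throughout $\D$: one must decompose the disk into pseudohyperbolic Carleson boxes around the zeros, exploit lower bounds for $|B|$ off a neighborhood of its zero set (available for thin sequences), and then control the integrand on each box via the compensating decay of $w$. The clause ``and more'' in the statement suggests that slightly sharper tuning of the parameters $(r_j, N_j, \beta_{j,k})$ yields membership of $M_w C_B$ in a Schatten class $S_p$ with $p < 2$, which can be verified by a Luecking-type characterization applied to the pull-back Carleson measure $\mu_{w, B}$ from Corollary~\ref{coro Carleson}.
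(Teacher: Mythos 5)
Your $H^2$ half is correct and in fact more self-contained than the paper's: the identity $\|M_w C_B\|_{HS}^2=\int_\T |w|^2(1-|B|^2)^{-1}\,dm$, together with $|B^\ast|=1$ a.e. (inner) and $|w^\ast|>0$ a.e. for $0\not\equiv w\in H^2$, does rule out Hilbert--Schmidt regularization on $H^2$ for \emph{every} Blaschke product. (The paper cites a stronger result from \cite{LLQR-compactification} ruling out even compactness, but your direct computation suffices for the statement at hand.)

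The $\mathfrak{B}^2$ half, however, is a plan rather than a proof, and the gap sits exactly where the theorem's content lies. You correctly reduce to making $\int_\D |w|^2(1-|B|^2)^{-2}\,dA<\infty$, but you never produce a Blaschke product and weight for which this holds: the ``Carleson-type grid'' of zeros and the compensating exponents $\beta_{j,k}$ are left unspecified, and you yourself flag the required two-sided quantitative control of $|B(z)|$ on $\D$ as ``the main obstacle'' without resolving it. That control is precisely the nontrivial input. The paper sidesteps your entire compensation scheme by invoking the existence of \emph{slow} Blaschke products (\cite[Theorem~3.1]{LELIQUR}, \cite[Theorem~4.4]{PDHL}): Blaschke products $B$ with $|B(z)|\leq\exp\bigl(-\delta(1-|z|)\bigr)$ for a prescribed slowly decaying $\delta$, so that $|B(z)|$ stays quantitatively away from $1$ as $|z|\to1$ and $C_B$ is already compact on the Bergman--Orlicz space $\mathfrak{B}^{\Psi_2}$, hence lies in \emph{every} Schatten class of $\mathfrak{B}^2$ --- with the trivial weight $w\equiv1$. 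No companion weight is needed, and the ``and more'' is exactly this membership in all $S_p$. To complete your argument you would either have to import that existence theorem (at which point your weight construction becomes superfluous) or reprove from scratch the uniform lower bound on $1-|B(z)|$ for a thin zero sequence, which is a substantial piece of work you have not supplied.
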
 
\begin{proof}
Any Blaschke product $B$ is an inner  function, i.e. $|B^\ast| = 1$ $m$-almost everywhere on the unit circle, implying, by 
\cite[Theorem~3.1]{LLQR-compactification}, that $M_w C_B$ is compact on $H^2$ for no weight $w \in H^2$, with $w \neq 0$. 

On the other hand, as a consequence of \cite[Theorem~3.1]{LELIQUR}, we proved (\cite[Theorem~4.4]{PDHL}; see also \cite[Theorem~13]{LD}) that there 
exist Blaschke products $B$ (which we called \emph{slow Blaschke products}) such that $C_B$ is compact on the Bergman-Orlicz space 
$\mathfrak{B}^{\Psi_2}$, and hence belong to every Schatten class $S_p$ of $\mathfrak{B}^2$.  
\end{proof}

Moreover, we can give the  following quantitative precision to Theorem~\ref{lent}.

\begin{theorem} \label{subex} 
For any sequence $(\eps_n)$ of positive numbers with limit zero, there is a Blaschke product $B$ such that
\begin{displaymath} 
a_{n} (C_B \colon \mathfrak{B}^2 \to \mathfrak{B}^2) \lesssim \e^{- n \eps_n} \, .
\end{displaymath} 
\end{theorem}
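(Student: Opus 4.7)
The plan is to refine the construction of slow Blaschke products from \cite{PDHL, LD} by tailoring the placement of their zeros to the given sequence $(\eps_n)$, and then to invoke a Queff\'elec--Seip-type upper bound for the approximation numbers $a_n(C_B)$ on $\mathfrak{B}^2$ in terms of the Carleson function of the pull-back measure $\mu_B := A \circ B^{-1}$.

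First I would fix a Blaschke product $B$ with zeros $(a_k)_{k \geq 1}$ placed, say, along the radius $[0, 1)$ at positions $a_k = 1 - \delta_k$ with $(\delta_k)$ to be chosen. The Carleson function
\[
\rho_B(h) := \sup_{\xi \in \partial\D} \frac{\mu_B[S(\xi, h)]}{h^2}
\]
is governed in an essentially explicit way by the spacing $(\delta_k)$: the more slowly $\delta_k \to 0$ (subject to $\sum \delta_k < \infty$), the more rapidly $\rho_B(h) \to 0$ as $h \to 0$. This is the core mechanism behind the \emph{slow Blaschke products} of \cite{PDHL, LD}, already exploited there to obtain compactness of $C_B$ on $\mathfrak{B}^{\Psi_2}$.

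Next I would invoke a quantitative upper bound on approximation numbers, in the spirit of \cite{LELIQUR} together with the transfer principle \cite[Theorem~3.12]{LLQR-comparison}, of the form
\[
a_n(C_B \colon \mathfrak{B}^2 \to \mathfrak{B}^2) \lesssim \rho_B(c/n)^{\kappa}
\]
for suitable absolute constants $c, \kappa > 0$. Matching this with the target $\e^{-n \eps_n}$ reduces the problem to exhibiting a Blaschke sequence whose Carleson function decays at a prescribed sub-exponential rate, essentially $\rho_B(1/n) \lesssim \e^{-n \eps_n / \kappa}$. Since $\eps_n \to 0$, this is a sub-exponential constraint on the tails $\sum_{k \geq n} \delta_k$, and an inductive diagonal choice of the $(\delta_k)$ tuned to $(\eps_n)$ can satisfy both it and the Blaschke condition $\sum \delta_k < \infty$ with room to spare.

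The main obstacle will be obtaining uniform control of $\rho_B(h)$ over all $\xi \in \partial \D$, not merely at the boundary accumulation point of the zeros; this forces one to either spread the zeros over a countable dense subset of $\partial \D$, or to invoke an interpolating Blaschke product with pseudohyperbolic separation estimates ensuring that the concentration of $\mu_B$ in Carleson boxes located far from the accumulation set is negligible compared to $h^2$. Beyond this geometric bookkeeping, the proof should reduce to a quantitative refinement of the qualitative membership-in-every-Schatten-class statement already derived in the proof of Theorem~\ref{lent}.
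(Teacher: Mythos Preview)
Your overall strategy---control a Carleson-type quantity for the pull-back measure and feed it into an upper bound for $a_n(C_B)$---is the right one, but two of the three ingredients you propose are off, and the paper's route is both simpler and avoids the obstacle you single out.

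First, the bound $a_n(C_B)\lesssim \rho_B(c/n)^\kappa$ is not what is used (and I do not know a reference giving it in that form). The paper invokes \cite[Theorem~5.1]{LQR}, which yields
\[
a_n(C_\phi)\ \lesssim\ \inf_{0<h<1}\Big[\sqrt{n}\,\e^{-nh}+\sqrt{\chi(h)/h^2}\Big],
\qquad \chi(h)=A\big(\{z:|\phi(z)|\ge 1-h\}\big).
\]
Note that $\chi(h)$ is the measure of a full annulus under the pull-back, not a supremum over Carleson boxes; one then simply takes $h=2\eps_n$. This means the ``main obstacle'' you anticipate---uniform control of $\rho_B(h)$ over all $\xi\in\partial\D$---never arises: $\chi(h)$ already dominates every box, and no angular bookkeeping or spreading of zeros is needed.

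Second, the paper does not place zeros by hand and then try to compute $\rho_B$ from the spacing $(\delta_k)$; that link is far from explicit. Instead it invokes \cite[Theorem~3.1]{LELIQUR}, which, for any sufficiently slowly decreasing $\delta\colon(0,1)\to(0,1)$, produces a Blaschke product with the \emph{pointwise} bound $|B(z)|\le\exp\big(-\delta(1-|z|)\big)$. Choosing $\delta$ so slow that $\delta(1-|z|)\le 4\eps_n$ forces $1-|z|\le \eps_n^2\,\e^{-2n\eps_n}$ gives immediately $\chi(2\eps_n)\lesssim \eps_n^2\,\e^{-2n\eps_n}$, and the \cite{LQR} bound above finishes. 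Your explicit-zero construction could perhaps be pushed through, but it would require both a correct replacement for the $a_n$--estimate and a genuine computation of the pull-back Carleson function from the zero set---work that the pointwise bound from \cite{LELIQUR} makes unnecessary.
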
 
\begin{proof} 
We can assume that $\eps_n$ decreases and that $n \eps_n \uparrow \infty$ with $n \eps_n \geq \sqrt{n}$. 

For a given symbol $\varphi$, we set
\begin{displaymath} 
\chi (h) = A (\{z \tq |\varphi (z) | \geq 1 - h \} ) \, . 
\end{displaymath} 
We use \cite[Theorem 5.1]{LQR} which implies that 
\begin{equation} \label{conseq} 
a_{n} (C_\varphi) \lesssim \inf_{0 < h < 1} \bigg[\sqrt{n} \, \e^{- n h} + \sqrt{\frac{\chi (h)}{h^2}} \bigg] \, .
\end{equation}
Let $\delta \colon (0, 1) \to (0, 1)$ be a non-increasing and piecewise linear map, decreasing to $0$ so slowly at the origin that
\begin{displaymath} 
\delta (1 - |z|) \leq 4 \, \eps_n \quad \Longrightarrow \quad 1 - |z|\leq \eps_{n}^{2} \exp (- 2 \, n \, \eps_n) \, .
\end{displaymath} 
By \cite[Theorem~3.1]{LELIQUR} again, there exists a Blaschke product $B$ such that $|B (z)| \leq \exp \big(- \delta (1 - |z|) \big)$.
Take $\varphi = B$ in \eqref{conseq} and observe that, for $h = 2 \, \eps_n \leq 1/2$, we have
\begin{displaymath} 
|B (z)| \geq 1 - h \quad \Longrightarrow \quad \exp \big(- \delta (1 - |z|) \big) \geq 1 - h \geq \exp ( - 2 h) \, .
\end{displaymath} 
Hence
\begin{displaymath} 
\delta (1 - |z|) \leq 4 \, \eps_n\quad \text{and} \quad 1 - |z| \leq \eps_{n}^{2}\exp (- 2 \, n \, \eps_n) 
\end{displaymath} 
and
\begin{displaymath} 
\chi (h) \leq 2 \, \eps_{n}^{2} \exp (- 2 \, n \, \eps_n) \, .
\end{displaymath} 
Inserting this in \eqref{conseq}, we get the result.
\end{proof}

In order to find a necessary and sufficient condition for a symbol can be weighted in a Hilbert-Schmidt operator, we make some observations.
\smallskip

As recalled, a weighted composition operator which is Hilbert-Schmidt on $H^2$ is also Hilbert-Schmidt on ${\mathfrak B}^2$. We know that 
$C_\phi$ is Hilbert-Schmidt on $H^2$ if and only if 
\begin{displaymath} 
\int_\T \frac{1}{1 - |\phi|^2} \, dm < \infty \, . 
\end{displaymath} 
Equivalently:
\begin{equation} 
\sum_{n = 0}^\infty \| \phi^n \|_{H^2}^2 < \infty \, .
\end{equation} 
On the other hand, $C_\phi$ can be weighted to become a Hilbert-Schmidt operator on $H^2$ if and only if 
\begin{displaymath} 
\int_\T \log \frac{1}{1 - |\phi|} \, dm < \infty \,, 
\end{displaymath} 
(\cite[Theorem~4.1]{LLQR-compactification}), which is equivalent to 
\begin{equation} \label{CNS Hardy}
\sum_{n = 0}^\infty \frac{1}{n + 1} \, \| C_\phi (e_n)\|_{H^2}^2 < \infty \, .
\end{equation} 

Now, writing $e_n (z) = z^n$, and since $\big( (n + 1)^{(\alpha + 1)/2} \, e_n \big)_n$ is an orthonormal basis of ${\mathfrak B}_\alpha^2$, $C_\phi$ is 
Hilbert-Schmidt on ${\mathfrak B}_\alpha^2$ if and only if
\begin{displaymath} 
\sum_{n = 0}^\infty (n + 1)^{\alpha  + 1} \| \phi^n \|_{{\mathfrak B}_\alpha^2}^2  = 
\sum_{n = 0}^\infty \| C_\phi \big(  (n + 1)^{(\alpha + 1)/2} \, e_n \big) \|_{{\mathfrak B}_\alpha^2}^2 < \infty \, .
\end{displaymath} 
By comparison with \eqref{CNS Hardy}, we might think that $C_\phi$ can be weighted to become a Hilbert-Schmidt operator on ${\mathfrak B}_\alpha^2$ 
if and only if
\begin{equation} 
\sum_{n = 0}^\infty (n + 1)^\alpha  \| \phi^n \|_{{\mathfrak B}_\alpha^2}^2 < \infty \, .
\end{equation} 
Since 
\begin{equation} 
\sum_{n = 0}^\infty (n + 1)^\alpha  \| \phi^n \|_{{\mathfrak B}_\alpha^2}^2 
= \int_\D \frac{(1 - |z|^2)^\alpha }{(1 - |\phi (z) |^2)^{\alpha + 1}} \, dA  (z) \, ,
\end{equation} 
this guesswork takes the following form: is tit true that there exists a weight $w$ such that 
$M_w C_\phi \colon {\mathfrak B}_\alpha^2 \to {\mathfrak B}_\alpha^2$ is Hilbert-Schmidt if and only if 
\begin{equation} \label{conjecture HS}
\quad \int_\D \frac{(1- |z|^2)^\alpha}{(1 - |\phi (z) |^2)^{\alpha + 1}} \, dA (z) < \infty \quad ?
\end{equation} 

We do not know if \eqref{conjecture HS} implies the existence of a weight $w \not\equiv 0$ for which $M_w C_\phi$ is Hilbert-Schmidt, but, in any case, 
it implies that $C_\phi$ is compactifiable. 
\goodbreak

\begin{proposition} \label{interesting prop}
If, for $\alpha > - 1$, we have $\dis \int_\D \frac{(1 - |z|^2)^\alpha}{(1 - |\phi (z) |)^{\alpha  + 1} }\, dA (z) < \infty$, then 
\begin{displaymath}
\qquad\qquad \qquad \quad \lim_{z \to \xi} \frac{1 - |z|}{1 - |\phi (z)|} = 0 \qquad \text{for almost all } \xi \in \T \, . 
\end{displaymath}
\end{proposition}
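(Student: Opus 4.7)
The stated conclusion is exactly condition $4)$ of Theorem~\ref{compactif}, and by the equivalence $3)\Leftrightarrow 4)$ proved there via the Julia--Carath\'eodory characterization \eqref{J-C}, it is equivalent to $m[{\mathcal{AD}}(\phi)] = 0$. I will therefore prove the contrapositive: if $E := {\mathcal{AD}}(\phi)$ has positive Lebesgue measure on $\T$, then
\[
\int_\D \frac{(1-|z|^2)^\alpha}{(1-|\phi(z)|)^{\alpha+1}} \, dA(z) = +\infty.
\]

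The first step is a Fubini/Tonelli reduction. Since the integrand is nonnegative, switching to polar coordinates $z = re^{i\theta}$ gives
\[
\int_\D \frac{(1-|z|^2)^\alpha}{(1-|\phi(z)|)^{\alpha+1}} \, dA(z) \; = \; \frac{1}{\pi}\int_0^{2\pi}\! I(\theta)\, d\theta,
\]
where $I(\theta) := \int_0^1 \frac{(1-r^2)^\alpha}{(1-|\phi(re^{i\theta})|)^{\alpha+1}}\, r\, dr$. So it suffices to exhibit a set of $\theta$ of positive measure on which $I(\theta) = +\infty$.

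The main step is precisely to show that $I(\theta) = +\infty$ whenever $\xi = e^{i\theta} \in E$. For such $\xi$, the Julia--Carath\'eodory theorem (applied along the radius, which lies in every Stolz angle at $\xi$) gives
\[
\lim_{r\to 1^-} \frac{1-r}{1-|\phi(r\xi)|} \; = \; \frac{1}{|\phi'(\xi)|} \; > \; 0,
\]
so there exist $r_\xi \in (0,1)$ and $c_\xi > 0$ with $1 - |\phi(r\xi)| \leq c_\xi(1-r)$ for $r \in [r_\xi,1)$. Plugging this lower bound on the integrand in yields
\[
\frac{(1-r^2)^\alpha}{(1-|\phi(r\xi)|)^{\alpha+1}} \; \geq \; \frac{C_\xi}{1-r} \quad (r_\xi \leq r < 1),
\]
so the radial integral diverges logarithmically. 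Since this happens for every $\xi \in E$ and $m(E)>0$, the double integral is infinite, which is the desired contradiction.

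The only point that needs a small justification is that the radial approach qualifies as an admissible angular approach in \eqref{J-C}; this is standard (every radius to $\xi$ sits inside any Stolz angle at $\xi$), so there is no real obstacle. Note, incidentally, that the finiteness hypothesis is used only in this weak, ``for a.e.\ radius'' form, which is why the implication goes in this direction but the converse is unclear.
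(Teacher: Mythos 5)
Your proof is correct, but it takes a genuinely different route from the paper's. You argue by contraposition: after identifying the conclusion with $m[{\mathcal{AD}}(\phi)]=0$ via the Julia--Carath\'eodory equivalence $3)\Leftrightarrow 4)$ of Theorem~\ref{compactif}, you observe that on every radius ending at a point $\xi\in{\mathcal{AD}}(\phi)$ one has $1-|\phi(r\xi)|\leq c_\xi\,(1-r)$ near $r=1$, so the inner integral $\int_0^1 (1-r^2)^\alpha\,(1-|\phi(r\e^{i\theta})|)^{-\alpha-1}\, r\, dr$ diverges like $\int \frac{dr}{1-r}$, and Tonelli then forces the area integral to be infinite as soon as $m[{\mathcal{AD}}(\phi)]>0$. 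The paper argues forward instead: it uses the logarithmic subharmonicity of $(1-|\phi|^2)^{-\alpha-1}$ to dominate the circle means $\int_\T g(r_n\e^{i\theta})\,d\theta$, with $g=\big(\frac{1-|z|^2}{1-|\phi(z)|^2}\big)^{\alpha+1}$ and $r_n=1-2^{-n}$, by the area integrals over the dyadic annuli $\Gamma_n$; summing over $n$ shows $\sum_n g(r_n\,\cdot)\in L^1(\T)$, hence $g(r_n\e^{i\theta})\to 0$ a.e., and the radial statement is then upgraded to the unrestricted limit by Julia--Carath\'eodory. Your route avoids the subharmonicity/annulus machinery entirely and is more elementary; what it uses instead is the (standard, and contained in the reference \cite{Shapiro-livre} the paper cites) part of Julia--Carath\'eodory asserting that $\frac{1-|\phi(z)|}{1-|z|}$ has nontangential limit $|\phi'(\xi)|<\infty$ at each $\xi\in{\mathcal{AD}}(\phi)$ --- and in fact only $\limsup_{r\to 1}\frac{1-|\phi(r\xi)|}{1-r}<\infty$ is needed, which already follows from the difference-quotient estimate appearing in the proof of $1)\Rightarrow 3)$ of Theorem~\ref{compactif}. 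Both approaches ultimately rest on the same Julia--Carath\'eodory dichotomy; yours trades the paper's a.e.\ convergence argument for a cleaner divergence-on-a-set-of-positive-measure argument.
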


Recall that, by Theorem~\ref{compactif}, this last condition means that $C_\phi$ is compactifiable on ${\mathfrak B}_\alpha^2$.
\begin{proof}
We set:
\begin{displaymath} 
g (z) = \bigg( \frac{1 - |z|^2}{1 - |\phi (z)|^2} \bigg)^{\alpha + 1} \, \cdot
\end{displaymath} 
Let $r_n = 1 - 2^{- n}$ and:
\begin{displaymath} 
\Gamma_n =\{ z \in \D \tq r_n \leq |z| < r_{n + 1} \} \, .
\end{displaymath} 

\hskip -1.5 pt Now, $1 / (1 - |\phi|^2)^{\alpha + 1}$ is subharmonic (and even logarithmically-subharmonic), because we can write 
$1 / (1 - |\phi|)^{\alpha + 1} = \sum_{k = 0}^\infty c_k (\alpha) |\phi|^{2 k}$ with $c_k (\alpha) \geq 0$. Hence, we have, since 
$A (\Gamma_n) \approx 1 - r_n^2$:
\begin{align*}
\int_\T g (r_n \e^{i \theta}) \, d\theta 
& = \int_\T \frac{(1 - r_n^2)^{\alpha + 1}}{(1 - |\phi (r_n \e^{i \theta}) |^2)^{\alpha  + 1} } \, d\theta \\
& \lesssim \frac{1}{1 - r_n^2} \int_{\Gamma_n} \frac{(1 - r_n^2)^{\alpha  + 1}}{(1 - |\phi (z) |^2)^{\alpha  + 1}} \, dA (z) \\
& \lesssim \int_{\Gamma_n} \frac{(1 - |z|^2)^\alpha}{(1 - |\phi (z) |^2)^{\alpha + 1}} \, dA (z) 
\end{align*}
(we used that $1 - r_n^2 \leq 2 (1 - r_n) = 2\times 2^{- n} = 4 (1 - r_{n + 1}) \leq 4 (1 - |z|) \leq 4 (1 - |z|^2)$ for $z \in \Gamma_n$, so 
$( 1 - r_n^2)^\alpha \lesssim (1 - |z|^2)^\alpha$ when $\alpha \geq 0$, and, when $- 1 < \alpha < 0$, we used that $1 - |z|^2 \leq 1 - r_n^2$ for 
$z \in \Gamma_n$). The sets $\Gamma_n$ being disjoint, we get that: 
\begin{align*} 
\int_\T \bigg( \sum_{n = 0}^\infty g (r_n \e^{i \theta}) \bigg) \, d \theta
& = \sum_{n = 0}^\infty \int_\T g (r_n \e^{i \theta}) \, d\theta \\
& \lesssim \sum_{n = 0}^\infty \int_{\Gamma_n} \frac{(1 - |z|^2)^\alpha}{(1 - |\phi (z) |^2)^{\alpha  + 1}} \, dA (z) \\
& = \int_\D \frac{(1 - |z|^2)^\alpha}{(1 - |\phi (z) |^2 )^{\alpha + 1} } \, dA (z) < \infty \, ,
\end{align*} 
meaning that the function $\sum_{n = 0}^\infty g (r_n \, \cdot)$ is integrable on $\T$. It follows that $g (r_n \, \cdot) \converge_{n \to \infty} 0$ 
almost everywhere. Since the existence of a radial limit implies that of an angular limit, we obtain that 
$\angle \lim_{z \to \xi} \frac{1 - |z|}{1 - |\phi (z)|} = 0$ for almost all $\xi \in \T$. By the Julia-Caratheodory theorem, it follows that 
$\lim_{z \to \xi} \frac{1 - |z|}{1 - |\phi (z)|} = 0$ for almost all $\xi \in \T$. 
\end{proof}

An a priori different condition than \eqref{conjecture HS} appears in the following theorem.
\goodbreak

\begin{theorem} \label{theo HS suff-ness} 
Let $d \lambda_\alpha (r) = 2 \, (\alpha + 1) \, (1 - r^2)^\alpha \, r \, dr$, be the marginal probability measure on $[0, 1)$ of  $d A_\alpha$,  and
\begin{equation}
G (\theta) =  \int_0^1 \frac{d \lambda_\alpha (r)}{(1 - |\phi (r \e^{i \theta})|^2)^{\alpha + 2} } \, \cdot
\end{equation}
Then:
\smallskip

$1)$ If\/ $\log G \in L^1 (0, 2 \pi)$, then there exists $w \in H^\infty$, $w \not\equiv 0$, such that $M_w C_\phi$ is Hilbert-Schmidt on 
${\mathfrak B}_\alpha^2$.
\smallskip

$2)$ Conversely, if there exists such a weight $w$, then $\log G \in L^{1, \infty} (0, 2 \pi)$. 
\end{theorem}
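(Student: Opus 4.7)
\emph{For direction $(1)$.} The plan is to construct the weight as an outer function. Setting $g(z) := (1-|\phi(z)|^2)^{-(\alpha+2)}$, we observe that $g \geq 1$ and $\lambda_\alpha$ is a probability measure on $[0,1)$, so $G \geq 1$ and $\log G \geq 0$; the hypothesis $\log G \in L^1$ then yields $\log(1/G) \in L^1$ as well. Let $W \in H^\infty$ be the outer function with $|W^*|^2 = 1/G$ on $\T$; then $\|W\|_\infty \leq 1$ and $W \not\equiv 0$. By the standard reproducing-kernel computation, $\|M_W C_\phi\|_{HS}^2 = \int_\D |W|^2 g \, dA_\alpha$, and it remains to prove this is finite. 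Since $W$ is outer, $\log|W|$ is harmonic, and Jensen's inequality gives $|W(r\e^{i\theta})|^2 \leq (P_r \ast (1/G))(\theta)$. Combining this with Fubini reduces the problem to

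\begin{displaymath}
\int_\T \frac{K(t)}{G(t)} \, dm(t) < \infty, \qquad K(t) := \int_0^1 (P_r \ast g_r)(t) \, d\lambda_\alpha(r),
\end{displaymath}

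where $g_r(\theta) := g(r\e^{i\theta})$.

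The core step is to show $K(t) \lesssim G(t)$, after which the integral above is bounded by $\int_\T (1/G) \cdot G \, dm \leq 2\pi$. Here $(P_r \ast g_r)(t)$ is the harmonic extension of $g_r$ evaluated at the interior point $r\e^{it}$, while $g(r\e^{it}) = g_r(t)$ is its boundary value at $\e^{it} \in \T$. Since $g$ is log-subharmonic on $\D$, a Harnack-type comparison for the positive harmonic function $P \ast g_r$ along a non-tangential cone at $\e^{it}$, together with integration in $r$ against $d\lambda_\alpha$, should yield the estimate. This is the main technical obstacle: the Poisson convolution along the circle does not admit an obvious pointwise majorization by $g_r(t)$, and the averaging in $r$ is essential.

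\emph{For direction $(2)$.} Assume $\int_\D |w|^2 g \, dA_\alpha < \infty$ and set $F(\theta) := \int_0^1 |w(r\e^{i\theta})|^2 g(r\e^{i\theta}) \, d\lambda_\alpha(r)$; then $F \in L^1(\T)$. As $w \in H^\infty \setminus \{0\}$ belongs to the Nevanlinna class, $\log|w^*| \in L^1(\T)$, and Fatou's theorem gives $|w(r\e^{i\theta})| \to |w^*(\theta)|$ radially for almost every $\theta$. Egorov's theorem then yields, for each $\eps > 0$, a set $E_\eps \subset \T$ with $m(\T \setminus E_\eps) < \eps$ and $r_\eps \in (0,1)$ such that $|w(r\e^{i\theta})| \geq \tfrac{1}{2}|w^*(\theta)|$ for all $\theta \in E_\eps$ and $r \in [r_\eps, 1)$. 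Since $g$ is bounded on the compact set $\{|z| \leq r_\eps\}$ by some $M_\eps < \infty$, we deduce that for $\theta \in E_\eps$,

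\begin{displaymath}
F(\theta) \geq \tfrac{1}{4}|w^*(\theta)|^2 \bigl(G(\theta) - M_\eps\bigr),
\end{displaymath}

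whence $G(\theta) \leq 2M_\eps + 8 F(\theta)/|w^*(\theta)|^2$ on $E_\eps$.

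To conclude, for $\lambda$ large and $\delta > 0$ to be chosen,

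\begin{displaymath}
m\{\log G > \lambda\} \leq m\bigl\{ 8 F /|w^*|^2 > \tfrac{1}{2}\e^\lambda \bigr\} + \eps + m\{|w^*| \leq \delta\}.
\end{displaymath}

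Chebyshev's inequality bounds the first term by $C\|F\|_1/(\delta^2 \e^\lambda)$ and the last by $\|{-}\log|w^*|\|_1/\log(1/\delta)$. Choosing $\delta = \e^{-\lambda/4}$ and $\eps = 1/\lambda$ produces $m\{\log G > \lambda\} = O(1/\lambda)$ as $\lambda \to \infty$. Since $\log G \geq 0$, this is precisely $\log G \in L^{1,\infty}(0, 2\pi)$, as required.
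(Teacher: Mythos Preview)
Both parts of your proposal contain genuine gaps.

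\textbf{Part $(1)$.} Your outer-function construction with $|W^*|^2 = 1/G$ is essentially the extremizer in Szeg\H{o}'s theorem, which is exactly what the paper invokes. But you explicitly stop at ``the main technical obstacle'': the pointwise bound $K(t)\lesssim G(t)$, where $K(t)=\int_0^1 (P_r\ast g_r)(t)\,d\lambda_\alpha(r)$. Your appeal to ``a Harnack-type comparison'' is not a proof. Note that $(P_r\ast g_r)(t)$ is the value at the interior point $re^{it}$ of the harmonic extension of $s\mapsto U(re^{is})$ from the \emph{unit} circle, which has no direct subharmonic comparison with $U(re^{it})$ itself. The paper does not attempt this pointwise estimate; instead it proves the integrated inequality
\[
\int_\D |w|^2 U\,dA_\alpha \;\lesssim\; C_1 + \int_\T |w^*(e^{i\theta})|^2\,G(\theta)\,d\theta
\quad\text{for all }w\in H^\infty,
\]
via a dyadic annular decomposition $r_n=\exp(-2^{-n})$, monotonicity of the circular means of the log-subharmonic function $|w|^2 U$, and a pseudo-hyperbolic comparison lemma ($\rho(u,v)\le 1/2 \Rightarrow U(u)\approx U(v)$). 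This is genuinely where the work lies, and you have not supplied it.

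\textbf{Part $(2)$.} Your Egorov argument has a circularity. Egorov gives, for each $\eps$, a set $E_\eps$ and a radius $r_\eps$ on which the multiplicative lower bound $|w(re^{i\theta})|\ge \tfrac12 |w^*(\theta)|$ holds (after a minor fix: apply Egorov to $\log|w(re^{i\theta})|\to\log|w^*(\theta)|$, not to $|w|$ itself, otherwise uniform convergence does not yield a multiplicative bound). But then $M_\eps=\sup_{|z|\le r_\eps} g(z)$ depends on $r_\eps$, hence on $\eps$, with \emph{no quantitative control}. When you set $\eps=1/\lambda$, you need $2M_{1/\lambda}\le \tfrac12 e^\lambda$ for the decomposition to work, yet $r_{1/\lambda}$ may tend to $1$ so fast (think of a Blaschke product whose zeros force the exceptional sets to be removed very slowly) that $M_{1/\lambda}$ grows faster than $e^\lambda$. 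Your final estimate simply omits this term.

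The paper avoids this by reducing to a structural result: setting $J(\theta)=\inf_{1/2\le r<1}|w(re^{i\theta})|$, one has $\log G \le \text{const} + F + 2\log(1/J)$, and the substance of the proof is the theorem that $\log(1/J)\in L^{1,\infty}$ for every $w\in H^\infty\setminus\{0\}$. This is proved by factoring $w=B\cdot w_0$; the zero-free part is handled via the Hardy--Littlewood maximal function of the positive measure representing $-\log|w_0|$, while the Blaschke part requires a careful pointwise majorization $\log(1/|B(z)|)\le f(\arg z)+u(z)$ with $f\in L^1$ and $u=P[w]$, $w\in L^1$, built zero-by-zero. This is not bypassable by soft measure theory.
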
 

Note that $G \geq 1$, so $\log G \geq 0$. 
\smallskip

Recall that $L^{1, \infty} (\mu)$ is the space of (classes of) measurable functions $f$ such that 
$\sup_{a > 0} a \, m (\{ |f| > a \}) < \infty$, and that $L^1 (\mu) \subseteq L^{1, \infty} (\mu)$, by Markov's inequality.

\subsection{Proof of $\bf 1)$ of Theorem~\ref{theo HS suff-ness}} 
For convenience, we set
\begin{equation} \label{def U}
U (z) = \frac{1 \ \ }{(1 - | \phi (z) |^2)^{\alpha + 2}} \, \cdot
\end{equation}
We will use two lemmas. For that, we denote $\rho$ the pseudo-hyperbolic metric on $\D$. Recall that
\begin{displaymath}
\qquad \quad \rho (u, v) = \bigg| \frac{u - v}{1 - \bar{u} v} \bigg| \, , \quad u, v \in \D \, .
\end{displaymath}
\begin{lemma} \label{lemma L1}
There is a positive constant $C = C (\alpha)$ such that, for $u, v \in \D$:
\begin{equation}
\rho (u, v) \leq \frac{1}{2} \quad \Longrightarrow \quad \frac{1}{C} \leq \frac{U (u)}{U (v)} \leq C \, .
\end{equation}
\end{lemma}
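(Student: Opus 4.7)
The plan is to reduce the statement about $U$ to the classical fact that the quantity $1-|z|^2$ is, up to universal constants, invariant under bounded pseudo-hyperbolic displacements, combined with the Schwarz--Pick contraction property of $\phi$.

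First I would observe that, by the Schwarz--Pick lemma, $\phi$ is a $\rho$-contraction, i.e. $\rho(\phi(u),\phi(v)) \leq \rho(u,v)$. Hence the hypothesis $\rho(u,v)\leq 1/2$ gives $\rho(\phi(u),\phi(v))\leq 1/2$, and it suffices to show that there is a constant $C_0 = C_0(\alpha)$ such that
\begin{equation*}
\rho(a,b) \leq \tfrac{1}{2} \quad \Longrightarrow \quad \frac{1}{C_0} \leq \frac{1-|a|^2}{1-|b|^2} \leq C_0
\end{equation*}
for $a,b \in \D$. Applying this with $a = \phi(u)$, $b = \phi(v)$ and raising to the power $\alpha+2$ then immediately gives the claimed bounds on $U(u)/U(v)$ with $C = C_0^{\alpha+2}$.

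To prove the intermediate estimate, I would start from the identity
\begin{equation*}
1 - \rho(a,b)^2 = \frac{(1-|a|^2)(1-|b|^2)}{|1-\bar a b|^2} \, ,
\end{equation*}
which gives $(1-|a|^2)(1-|b|^2) \geq \tfrac{3}{4}\,|1-\bar a b|^2$ when $\rho(a,b)\leq 1/2$. Since $|1-\bar a b| \geq 1-|a||b| \geq 1-|b| \geq \tfrac{1}{2}(1-|b|^2)$, this yields $1-|a|^2 \geq \tfrac{3}{16}(1-|b|^2)$; by symmetry the reverse inequality holds, giving the two-sided bound with $C_0 = 16/3$.

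There is no real obstacle here: the argument is a direct chain of standard computations, and the only conceptual ingredient is the observation that the pseudo-hyperbolic metric is the right object to pull back through $\phi$ so that everything collapses to a universal $\D$-statement independent of $\phi$.
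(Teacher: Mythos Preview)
Your proof is correct and follows the same overall strategy as the paper: both use the Schwarz--Pick contraction property of $\phi$ to reduce the claim to the universal statement that $\rho(a,b)\le 1/2$ forces $1-|a|$ and $1-|b|$ (or equivalently $1-|a|^2$ and $1-|b|^2$) to be comparable by an absolute constant, and then raise to the power $\alpha+2$.

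The only difference is in how that universal comparison is established. The paper first passes from $1-|\cdot|^2$ to $1-|\cdot|$, then uses the extra contraction $\rho(|\phi(u)|,|\phi(v)|)\le\rho(\phi(u),\phi(v))$ to reduce to real $0\le a,b<1$, and finishes with an explicit M\"obius computation showing $(1-a)/(1-b)\le 3$. You instead keep $a,b$ complex, invoke the identity $1-\rho(a,b)^2=\dfrac{(1-|a|^2)(1-|b|^2)}{|1-\bar a b|^2}$, and chain it with $|1-\bar a b|\ge 1-|b|\ge\tfrac12(1-|b|^2)$ to obtain $C_0=16/3$. Your route is a touch more direct (no reduction to real parameters, no case split), at the cost of a slightly worse constant; either argument is entirely standard.
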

\begin{proof}
Since 
\begin{displaymath}
\bigg( \frac{1}{2} \times \frac{1 - | \phi (v) |}{1 - | \phi (u) |} \bigg)^{\alpha  + 2} \leq \frac{U (u)}{U (v)} 
\leq \bigg( 2 \times \frac{1 - | \phi (v) |}{1 - | \phi (u) |} \bigg)^{\alpha  + 2} \, ,
\end{displaymath}
it suffices to show that there is a positive constant such that 
\begin{displaymath}
\frac{1}{C} \leq \frac{1 - | \phi (v) |}{1 - | \phi (u) |} \leq C 
\end{displaymath}
when $\rho (u, v) \leq 1/2$. Moreover, by the Schwarz-Pick inequality, we have:
\begin{displaymath}
\rho \big( |\phi (u)|, |\phi (v)| \big) \leq \rho \big( \phi (u), \phi (v) \big) \leq \rho (u, v) \, ,
\end{displaymath}
it suffices to majorize $q := \frac{1 - a}{1 - b}$ when $\rho (a, b) \leq 1/2$ and $0 \leq a, b < 1$ (the minoration will come by exchanging $a$ and $b$). 

If $a \geq b$, then $q \leq 1$

If $a < b$, we remark that $\rho (a, b) \leq 1/2$ writes $T_a (b) := \frac{a - b}{1 - a b} \geq - 1 / 2$. Since $T_a$ is decreasing on $[- 1, 1]$, we get 
$b \leq T_a (- 1 / 2)$, i.e. $b \leq \frac{1 + 2 a}{2 + a}$, and $1 - b \geq \frac{1 - a}{2 + a}$. Therefore $q \leq 2 + a \leq 3$.
\end{proof}

Let, for $n \geq 0$: 
\begin{displaymath}
r_n = \exp (- 2^{- n})   
\end{displaymath}
and 
\begin{displaymath}
\Gamma_n = \{z \in \D \tq r_n \leq |z| < r_{n + 1} \} \, .
\end{displaymath}
\begin{lemma} \label{lemma L2}
For $r_n \leq u, v \leq r_{n + 1}$, we have $\rho (u \, \e^{i \theta}, v \, \e^{i \theta} ) \leq 1/2$, for every $\theta \in \R$.
\end{lemma}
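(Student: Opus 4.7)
The plan is to exploit rotation invariance of the pseudo-hyperbolic metric and then reduce the inequality to a simple one-variable estimate on the endpoints of $[r_n, r_{n+1}]$.

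First I would note that $\rho$ is invariant under rotations of $\D$: for any $\theta \in \R$,
\begin{displaymath}
\rho(u \e^{i\theta}, v \e^{i\theta}) = \bigg| \frac{u\e^{i\theta} - v\e^{i\theta}}{1 - \bar{u}\e^{-i\theta} v \e^{i\theta}} \bigg| = \bigg| \frac{u - v}{1 - u v} \bigg| = \rho(u, v)\,,
\end{displaymath}
since $u, v \geq 0$ are real. So it suffices to show $\rho(u, v) \leq 1/2$ whenever $r_n \leq u \leq v \leq r_{n+1}$. The function $(u,v) \mapsto \rho(u,v) = (v - u)/(1 - uv)$ on the region $0 \leq u \leq v < 1$ is decreasing in $u$ and increasing in $v$, hence its maximum on the square $[r_n, r_{n+1}]^2$ is attained at $(u,v) = (r_n, r_{n+1})$.

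It remains to bound $\rho(r_n, r_{n+1})$. Setting $a = 2^{-n-1}$, we have $r_{n+1} = \e^{-a}$ and $r_n = \e^{-2 a}$, so using the factorisation $1 - x^3 = (1 - x)(1 + x + x^2)$ with $x = \e^{-a}$:
\begin{displaymath}
\rho(r_n, r_{n+1}) = \frac{\e^{-a} - \e^{-2a}}{1 - \e^{-3a}} = \frac{\e^{-a} (1 - \e^{-a})}{(1 - \e^{-a})(1 + \e^{-a} + \e^{-2a})} = \frac{\e^{-a}}{1 + \e^{-a} + \e^{-2a}}\,.
\end{displaymath}
The map $x \mapsto x/(1 + x + x^2)$ is increasing on $(0, 1]$ (its derivative equals $(1 - x^2)/(1 + x + x^2)^2 \geq 0$) and takes the value $1/3$ at $x = 1$. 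Hence
\begin{displaymath}
\rho(r_n, r_{n+1}) \leq \frac{1}{3} < \frac{1}{2}\,,
\end{displaymath}
which finishes the proof. There is no real obstacle here: the only point requiring care is the monotonicity observation that allows the two-variable supremum over $[r_n, r_{n+1}]^2$ to be reduced to the endpoint pair $(r_n, r_{n+1})$.
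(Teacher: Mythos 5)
Your proof is correct and follows essentially the same route as the paper: both reduce to the endpoint pair $(r_n, r_{n+1})$ and exploit the relation $r_n = r_{n+1}^2$ coming from $r_n = \exp(-2^{-n})$. The paper simply bounds the numerator by $r_{n+1} - r_n$ and the denominator from below by $1 - r_{n+1}^2$ to get $r_{n+1}/(1 + r_{n+1}) \leq 1/2$, while your exact factorisation yields the slightly sharper constant $1/3$; the monotonicity step you include is a clean justification of a reduction the paper takes for granted.
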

\begin{proof}
It is a simple computation:
\begin{displaymath}
\rho (u \, \e^{i \theta}, v \, \e^{i \theta}) = \frac{|u - v|}{1 - u v} \leq \frac{r_{n + 1} - r_n}{1 - r_{n + 1}^2} 
= \frac{r_{n + 1} - r_{n + 1}^2}{1 - r_{n + 1}^2} 
= \frac{r_{n + 1}}{1 + r_{n + 1}} \leq \frac{1}{2} \, \cdot \qedhere
\end{displaymath}
\end{proof}

Now, we can finish the proof of $1)$ of Theorem~\ref{theo HS suff-ness}.
\smallskip
 
We have to show that there exists a non-null function $w_0 \in H^\infty$ such that
\begin{equation} \label{to get}
\int_\D |w_0|^2 U \, dA_\alpha < \infty \, ,
\end{equation}
where $dA_\alpha  (z) = (\alpha + 1) (1 - |z|^2)^\alpha \, dA (z)$.

For every $w \in H^\infty$, we have:
\begin{displaymath}
\int_\D |w|^2 U \, dA_\alpha 
= \int_{D (0, \e^{- 1})} |w|^2 U \, dA_\alpha + \sum_{n = 0}^\infty \int_{\Gamma_n} |w|^2 U \, dA_\alpha 
\end{displaymath}
For every $n \geq 0$:
\begin{displaymath}
\int_{\Gamma_n} |w|^2 U \, dA_\alpha 
= 2 \, (\alpha  + 1) \int_{r_n}^{r_{n + 1}} 
\bigg( \frac{1}{2 \pi} \int_0^{2 \pi} | w (r \e^{i \theta}) |^2 \, U (r \e^{i \theta}) \, d \theta  \bigg) (1 - r^2)^\alpha \, r \, dr \\ 
\end{displaymath}
As said in the proof of Proposition~\ref{interesting prop}, $U$ is logarithmically-subharmonic; hence  
the function $|w|^2 \, U$ is also logarithmically-subharmonic; in particular, it is subharmonic; so we have (see \cite[Theorem~1.6, page~9]{Duren}), 
for $r_n \leq r \leq r_{n + 1}$:
\begin{displaymath}
\frac{1}{2 \pi} \int_0^{2 \pi} | w (r \e^{i \theta}) |^2 \, U (r \e^{i \theta}) \, d \theta 
\leq \frac{1}{2 \pi} \int_0^{2 \pi} | w (r_{n + 1} \e^{i \theta}) |^2 \, U (r_{n + 1} \e^{i \theta}) \, d \theta \, .
\end{displaymath}
By Lemma~\ref{lemma L1} and Lemma~\ref{lemma L2}, we have $U (r_{n + 1} \e^{i \theta}) \leq C \, U (r_n \, \e^{i \theta})$. 
But $r_n = r_{n + 1}^2$, so $U (r_{n + 1} \e^{i \theta}) \leq C \, U (r_{n + 1}^2 \, \e^{i \theta})$, and hence
\begin{displaymath}
\frac{1}{2 \pi} \int_0^{2 \pi} | w (r \e^{i \theta}) |^2 \, U (r \e^{i \theta}) \, d \theta 
\leq C \, \frac{1}{2 \pi} \int_0^{2 \pi} | w (r_{n + 1} \e^{i \theta}) |^2 \, U (r_{n + 1}^2 \e^{i \theta}) \, d \theta \, .
\end{displaymath}
By the subharmonicity of $|w|^2 \, U$ again, we obtain:
\begin{displaymath}
\frac{1}{2 \pi} \int_0^{2 \pi} | w (r \e^{i \theta}) |^2 \, U (r \e^{i \theta}) \, d \theta 
\leq C \, \frac{1}{2 \pi} \int_0^{2 \pi} | w ( \e^{i \theta}) |^2 \, U (r_{n + 1} \e^{i \theta}) \, d \theta \, .
\end{displaymath}
Using Lemma~\ref{lemma L1} and Lemma~\ref{lemma L2} again, we have, for every $r_n \leq r < r_{n + 1}$:
\begin{displaymath}
\frac{1}{2 \pi} \int_0^{2 \pi} | w ( \e^{i \theta}) |^2 \, U (r_{n + 1} \e^{i \theta}) \, d \theta 
\leq C\, \frac{1}{2 \pi} \int_0^{2 \pi} | w ( \e^{i \theta}) |^2 \, U (r \, \e^{i \theta}) \, d \theta \, .
\end{displaymath}
Therefore:
\begin{displaymath}
\int_{\Gamma_n} |w|^2 U \, dA_\alpha 
\leq C^{\, 2} \int_{r_n}^{r_{n + 1}} 
\bigg( \frac{1}{2 \pi} \int_0^{2 \pi} | w ( \e^{i \theta}) |^2 \, U (r \, \e^{i \theta}) \, d \theta \bigg)  \, d\lambda_\alpha ( r ) \, .
\end{displaymath}
Using the Fubini theorem, we finally obtain:
\begin{align*}
\int_\D |w|^2 U \, dA_\alpha 
& \leq \int_{D (0, \e^{- 1})} |w|^2 U \, dA_\alpha \\
&  \qquad \quad + C^{\, 2} \frac{1}{2 \pi} \int_0^{2 \pi} 
\bigg(\int_{\e^{- 1}}^1 U (r\, \e^{i \theta}) \,  d \lambda_\alpha ( r ) \bigg) \, |w (\e^{i \theta}) |^2 \, d \theta \, .
\end{align*}
Since
\begin{displaymath}
G (\theta) = \int_0^1 U (r\, \e^{i \theta}) \, d \lambda_\alpha ( r ) \, ,
\end{displaymath}
we have:
\begin{equation} \label{formula}
\int_\D |w|^2 U \, dA_\alpha \leq 
\int_{D (0, \e^{- 1})} |w|^2 U \, dA_\alpha + C^{\, 2} \frac{1}{2 \pi} \int_0^{2 \pi} G (\theta) \, |w (\e^{i \theta}) |^2 \, d \theta \, .
\end{equation}

We now use Szeg\"o's theorem (see \cite[Theorem~3.1, Chapter~IV, page~139]{Garnett}, or \cite[Section 8.3]{Rudin}):
\begin{align*}
\inf_{w \in H^\infty, w (0) = 1} \frac{1}{2 \pi} \int_0^{2 \pi} |w (\e^{i \theta})|^2 G (\theta) \, d \theta 
= \exp \bigg( \frac{1}{2 \pi} \int_0^{2 \pi} \log G (\theta) \, d \theta \bigg) \, .
\end{align*}

Remarking that the hypothesis of the theorem writes:
\begin{displaymath}
\int_0^{2 \pi} \log G (\theta) \, d \theta < \infty \, ,
\end{displaymath}
that shows that there exists $w_0 \in H^\infty$ with $w_0 (0) = 1$ such that 
\begin{displaymath}
\frac{1}{2 \pi} \int_0^{2 \pi} |w_0 (\e^{i \theta})|^2 G (\theta) \, d \theta  < \infty \, .
\end{displaymath}
With \eqref{formula}, that shows that $w_0$ satisfies \eqref{to get}, and that ends the proof of $1)$ of 
Theorem~\ref{theo HS suff-ness}.
\smallskip

Note that the proof shows that we can actually get  a polynomial for $w_0$. 

%%%%%%%%%%%%%%%%%%%%%%%%%%%%%%%%%%%%%%%%%%%%
\subsection{Proof of $\bf 2)$ of Theorem~\ref{theo HS suff-ness}}
We may, and do, assume that $\| w \|_\infty = 1$.
\smallskip

By hypothesis, we have
\begin{displaymath}
\int_\D |w (z)|^2 \frac{(1 - |z|^2)^\alpha \ }{\big( 1 - |\phi (z)|^2 \big)^{\alpha + 2}} \, dA (z) < \infty \, .
\end{displaymath}
Setting, with $U$ defined in \eqref{def U}:
\begin{equation}
\psi (\theta) = \int_{1/2}^1 |w (r \, \e^{i \theta}) |^2 \, U (r \, \e^{i \theta}) \, d \lambda_\alpha ( r ) \, ,
\end{equation}
we hence have $\psi \in L^1 (0, 2 \pi)$.

Let 
\begin{displaymath}
\tilde G (\theta) = \int_{1/2}^1 U (r\, \e^{i \theta}) \, d \lambda_\alpha ( r ) 
\end{displaymath}
and
\begin{displaymath}
J (\theta) = \inf \{ | w (r \, \e^{i \theta}) |^2 \tq 1/ 2 \leq r < 1 \} \, .
\end{displaymath}
We have $ \psi (\theta) \geq \tilde G (\theta) \, J (\theta)$, so
\begin{displaymath}
\log \tilde G \leq \log \psi + \log (1 / J) \leq \log^+ \psi + \log (1 / J) \leq \psi + \log (1 / J) \, .
\end{displaymath}
Since $U \geq 1$, we have $\tilde G (\theta) \geq C_\alpha$, with $C_\alpha = ( 3 / 4)^{\alpha  + 1} > 0$; hence 
$\log \tilde G (\theta) \geq \log C_\alpha > - \infty$. Therefore, to get $\log G \in L^{1, \infty} (0, 2 \pi)$ and finish the proof of $2)$ of 
Theorem~\ref{theo HS suff-ness}, it suffices to prove that $\log \tilde G \in L^{1, \infty} (0, 2 \pi)$, and for that, to prove that 
$\log ( 1 / J) \in L^{1, \infty} (0, 2 \pi)$. This is the object of the following theorem.

\begin{theorem} \label{theo with inf}
Let $v \in H^\infty$ such that $\| v \|_\infty = 1$ and set
\begin{equation}
I_v (\theta) = \inf \{ |v (r \, \e^{i \theta}) | \tq 1 / 2 \leq r < 1 \} \, .
\end{equation}
Then $\log ( 1 / I_v) \in L^{1, \infty} (0, 2 \pi)$.
\end{theorem}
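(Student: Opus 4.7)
My plan is to use the inner--outer factorization $v = B\,S\,F$, where $B$ is the Blaschke product over the zeros $(a_n)$ of $v$, $S$ is the singular inner factor with associated finite positive singular measure $\mu$ on $\T$, and $F$ is the outer factor. Since $\| v \|_\infty = 1$, each factor has sup-norm at most $1$, so the three summands in $-\log|v(z)| = -\log|B(z)| - \log|S(z)| - \log|F(z)|$ are non-negative. Taking the radial supremum over $r \in [1/2, 1)$ then gives $\log(1/I_v) \le M_B + M_S + M_F$, where $M_\bullet$ denotes the radial maximum of the corresponding piece; it therefore suffices to show that each of $M_B$, $M_S$, $M_F$ belongs to $L^{1,\infty}(0, 2\pi)$.

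For the outer factor, $-\log|F(z)| = P[-\log|v^*|](z)$ is the Poisson integral of the non-negative function $-\log|v^*| \in L^1(\T)$ (classical Nevanlinna integrability of $\log|v^*|$ for non-null $v \in N^+$). For the singular factor, $-\log|S(z)| = P[d\mu](z)$ is the Poisson integral of the finite positive singular measure $\mu$. In both cases the radial maximal function of the Poisson integral is pointwise dominated by a constant times the Hardy--Littlewood maximal function of the underlying measure on $\T$, and the Hardy--Littlewood operator sends finite positive measures (and $L^1(\T)$) into $L^{1,\infty}(\T)$. Hence $M_F, M_S \in L^{1,\infty}(\T)$.

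The Blaschke piece is the main obstacle. One has $-\log|B(z)| = \sum_n G(z, a_n)$, with $G$ the Green function of $\D$ and $\sum_n (1 - |a_n|) < \infty$. Setting $G_n^*(\theta) := \sup_{1/2 \le r < 1} G(re^{i\theta}, a_n)$, so that $M_B \le \sum_n G_n^*$, the key estimate to prove is $\| G_n^* \|_{L^1(\T)} \lesssim 1 - |a_n|$: for $|a_n| \ge 1/2$ one finds the radial parameter that realizes $\min_{r \in [1/2,1)} |re^{i\theta} - a_n|$, obtaining $G_n^*(\theta) \asymp \log^+\bigl((1 - |a_n|)/|\theta - \arg a_n|\bigr)$, whose $L^1$-norm over $\T$ is of order $1 - |a_n|$; the finitely many zeros with $|a_n| < 1/2$ contribute only a bounded term. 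Summing using the Blaschke condition gives $M_B \in L^1(\T) \subset L^{1,\infty}(\T)$, and a standard quasi-triangle inequality in $L^{1,\infty}$ then concludes the proof.

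The subtlety is obtaining the linear dependence $\lesssim 1 - |a_n|$ rather than a uniform bound, which is what matches the Blaschke condition; this reflects the localization of the Green function around its pole. Note also that $M_F$ and $M_S$ are generally not in $L^1$, which is exactly the reason the theorem is naturally stated in $L^{1,\infty}$ rather than in $L^1$.
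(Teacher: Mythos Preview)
Your treatment of the zero-free part (outer and singular inner factors) is correct and coincides with the paper's handling of the non-vanishing factor $v_0 = SF$: in both cases one writes $-\log|v_0| = {\rm P}[\mu]$ for a finite positive measure $\mu$ and controls the radial maximum by the Hardy--Littlewood maximal function of $\mu$.

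The Blaschke piece, however, contains a genuine gap. The claimed asymptotic $G_n^*(\theta) \asymp \log^+\bigl((1-|a_n|)/|\theta - \arg a_n|\bigr)$ is false. Take $a = 1-h \in (3/4, 1)$ real and $h \ll |\theta| \ll 1$. The radial maximum of $G(\,\cdot\,, a)$ along the ray of argument $\theta$ is \emph{not} attained near the foot of the perpendicular from $a$ (where $G$ is indeed small), but at $r \approx 1 - |\theta|$, where a direct computation gives $G(r\e^{i\theta}, a) \approx h/|\theta|$. Hence $G_a^*(\theta) \approx h/|\theta|$ on the range $h \lesssim |\theta| \lesssim 1$, and
\[
\|G_a^*\|_{L^1(\T)} \gtrsim \int_h^1 \frac{h}{\theta}\, d\theta = h\log\frac{1}{h} \, .
\]
The correct bound $\|G_n^*\|_{L^1} \lesssim (1-|a_n|)\log\dfrac{1}{1-|a_n|}$ does \emph{not} sum under the Blaschke condition alone (take $1 - |a_n| = 1/(n\log^2 n)$), so the step ``$M_B \le \sum_n G_n^* \in L^1$'' breaks down.

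The paper circumvents this by avoiding term-by-term radial maxima. For each zero $a$ it establishes a pointwise inequality $\log(1/|\phi_a(z)|) \le f_a(\arg z) + u_a(z)$ on all of $\D$, where $f_a \in L^1(\T)$ with $\|f_a\|_1 \lesssim 1 - |a|$ handles the pseudo-hyperbolic disk $\Delta(a, 1/2)$, and $u_a = {\rm P}[w_a]$ with $\|w_a\|_1 \lesssim 1 - |a|$ is a harmonic majorant on $\D \setminus \Delta(a, 1/2)$ obtained via the maximum principle. Summing over the zeros \emph{before} taking the radial supremum yields $\log(1/|B_0(z)|) \le f(\arg z) + {\rm P}[w](z)$ with $f, w \in L^1(\T)$, whence the radial maximum is bounded by $f(\theta) + M_w(\theta) \in L^1 + L^{1,\infty} \subseteq L^{1,\infty}$. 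The key point is that the Poisson-integral domination lets one sum the boundary data $w_a$ in $L^1$ first; taking the maximal function of each $u_a$ separately and then summing would reproduce exactly your logarithmic loss.
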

\begin{proof}
We can write $v (z) = B (z) \, v_0 (z)$, where $B$ is the Blaschke product whose zeros are those of $v$, and $v_0$ does not vanish. Since 
\begin{displaymath} 
I_v \geq I_B \times I_{v_0} \, ,
\end{displaymath} 
it suffices to prove that $\log (1 / I_B) \in L^{1, \infty} (0, 2 \pi)$ and $\log (1 / I_{v_0} ) \in L^{1, \infty} (0, 2 \pi)$. 
\medskip

\noindent {\sl Case of a non vanishing function.} 

We can write $v_0 = \exp (- h)$, where $h \colon \D \to \{ \Re z > 0 \}$. We have $h = u  + i \tilde u$, where $u = \Re h$ and $\tilde u$ is the conjugate 
function of $u$. Since $u > 0$, $u = {\rm P} [\mu]$ is the Poisson integral of a positive measure $\mu$, and we have
\begin{displaymath} 
\qquad \qquad u (r \, \e^{i \theta}) \leq C \, M_\mu (\theta) \, \quad \forall r \in [0, 1) \, ,
\end{displaymath} 
where $M_\mu$ is the Hardy-Littlewood maximal function of $\mu$. Then:
\begin{displaymath} 
| v_0 (r \, \e^{i \theta} ) | \geq \exp \big( - C \, M_\mu (\theta)  \big) \, ;
\end{displaymath} 
so $I_{v_0} (\theta) \geq \exp \big( - C \, M_\mu (\theta)  \big)$, and $\log \big(1 / I_{v_0} (\theta) \big) \leq C \, M_\mu (\theta)$. 
Since $M_\mu \in L^{1, \infty} (0, 2 \pi)$, by Kolmogorov's theorem, we obtain that  $\log (1 / I_{v_0}) \in L^{1, \infty} (0, 2 \pi)$.
\medskip\goodbreak

\noindent {\sl Case of a Blaschke product.}
\smallskip

This case will follow from the next result. We note $\arg z$ the principal argument of $z$: $- \pi < \arg z \leq \pi$. 
\begin{proposition} \label{proposition Blaschke}
Let $B_0$ be a Blaschke product whose zeros $a_n$ have modulus greater or equal to some positive constant $c$, say $c = 3 / 4$. 
Then there exist $f \in L^1 (- \pi, \pi)$ and $u = {\rm P} [w]$, with $w \in L^1 (- \pi, \pi)$, such that
\begin{equation} 
\qquad \log \big( 1 / |B_0 (z)| \big) \leq f (\arg z) + u (z) \, , \quad \text{for all } z \in \D \, .
\end{equation} 
\end{proposition}

For $a \in \D$, we denote $\Delta (a , 1 /2 )$ the pseudo-hyperbolic disk of center $a$ and radius $1 / 2$. 
\smallskip

We begin by two lemmas.

\begin{lemma} \label{essential lemma}
For $a \in \D$, we set
\begin{displaymath}
\phi_a (z) = \frac{a - z}{1 - \bar{a} z} \, \raise 1 pt \hbox{,}
\end{displaymath}
as well as $I_a = I_{\phi_a}$ and $G_a = \log (1 / I_a)$. Then, for every $a \in \D$, we have $G_a \in L^1 (- \pi, \pi)$.
\end{lemma}

First, we have $G_a \geq 0$. Then:
\begin{displaymath}
| \phi_a (z) | = \frac{| a - z|}{| 1 - \bar{a} z |} \geq \frac{|z - a |}{2} \, ;
\end{displaymath}
so, it suffices to give a lower estimate of $|z - a |$.

We separate two cases.
\smallskip

$\bullet$ First case: $|a| \leq 1 / 4$. Then we have $| r \, \e^{i \theta} - a | \geq 1 / 4$ when $1 / 2 \leq r < 1$; hence $G_a (\theta) \leq \log 8$ 
for all $\theta$ and $G_a \in L^1 (- \pi, \pi)$.
\smallskip

$\bullet$ Second case: $|a| > 1 / 4$.  We can assume that $1 / 4 < a < 1$. If $z = r \, \e^{i \theta}$, then, for $|\theta| \leq \pi / 2$: 
\begin{displaymath}
| z -a | \geq {\rm dist}\, (a, R_\theta) = a \, |\sin \theta | \, , 
\end{displaymath}
where $R_\theta$ is the ray passing through $0$ and $\e^{i \theta}$, so 
\begin{displaymath}
G_a (\theta)  \leq \bigg| \log \bigg( \frac{a}{2} \, |\sin \theta| \bigg) \bigg| 
\end{displaymath}
and $G_a \in L^1 (- \pi, \pi)$.
\end{proof}
\begin{lemma} \label{essential lemma 2}
There is a positive constant $C$ such that, for $3 / 4 \leq a < 1$ and $h = 1 - a$, we have:
\begin{equation}
\qquad \qquad |\theta| \leq C h \quad \text{when } z = r \, \e^{i \theta} \in \Delta (a, 1 / 2) \, . 
\end{equation}
\end{lemma}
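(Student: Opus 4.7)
The plan is to exploit the standard fact that the pseudo-hyperbolic disk $\Delta(a,1/2)$ is a Euclidean disk whose center and radius are explicitly computable. For real $a \in [3/4,1)$, applying the Möbius transformation $\phi_a$ one finds that $\Delta(a,1/2)$ is the Euclidean disk with real center $c_a = \frac{3a}{4-a^2}$ and radius $R_a = \frac{2(1-a^2)}{4-a^2}$. Since $4-a^2 \geq 3$ and $1-a^2 = (1-a)(1+a) \leq 2h$, we obtain $R_a \leq \frac{4h}{3}$. Moreover, a direct computation shows $c_a - R_a = \frac{2a^2 + 3a - 2}{4-a^2}$, which is strictly positive and bounded below by an absolute constant $c_0 > 0$ for $a \in [3/4,1)$ (taking its minimum at $a = 3/4$).

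Next, for $z = r\e^{i\theta} \in \Delta(a,1/2)$, I would read off the two inequalities $|\Im z| \leq R_a \lesssim h$ and $\Re z \geq c_a - R_a \geq c_0 > 0$. The second bound implies both $|z| \geq c_0$ and $|\theta| < \pi/2$. Since $|\Im z| = r\,|\sin\theta|$, the first bound yields $|\sin\theta| \leq R_a/c_0 \lesssim h$. Finally, the elementary inequality $|\theta| \leq \frac{\pi}{2}|\sin\theta|$ valid on $[-\pi/2,\pi/2]$ gives $|\theta| \lesssim h$, which is the required estimate.

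The argument is entirely geometric and essentially a computation; the only minor care needed is to check that the constant $c_0 = \inf_{a \in [3/4,1)}(c_a - R_a)$ is strictly positive (which it is, since $c_a - R_a$ tends to $1$ as $a \to 1$ and equals a positive number at $a = 3/4$). There is no serious conceptual obstacle — the whole point of the lemma is that when $a$ is close to the boundary, the pseudo-hyperbolic disk shrinks Euclideanly at the rate $1-|a|$, and the angular aperture it subtends at the origin shrinks at the same rate.
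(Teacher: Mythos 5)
Your proof is correct and follows essentially the same route as the paper: both identify $\Delta(a,1/2)$ with the explicit Euclidean disk of center $\frac{3a}{4-a^2}$ and radius $\frac{2(1-a^2)}{4-a^2}\leq \frac{4h}{3}$, and then bound the angular aperture seen from the origin (the paper via the tangent-line half-angle $\sin\theta_a = R/\tilde a$, you via $|\sin\theta|\leq R_a/(c_a-R_a)$, which are interchangeable one-line variants). Nothing is missing.
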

\begin{proof} 
The pseudo-hyperbolic disk $\Delta (a, 1 / 2)$ is equal to the Euclidean disk $D (\tilde a, R)$, with 
\begin{displaymath}
\tilde a = \frac{3}{4 - |a|^2} \, a \quad \text{and} \quad R = 2 \, \frac{1 - |a|^2}{4 - |a|^2}
\end{displaymath}
(see \cite[page~3]{Garnett}). For $0 < a < 1$ and $h = 1 - a$, we have $R \leq 4 \, h / 3$ and $3 \, a / 4 \leq \tilde a \leq a$. 
Hence $\Delta (a, 1 / 2)$ is contained in the angular sector of vertex $0$ and half-angle $\theta_a$ such that 
$\sin \theta_a = R / \tilde a \leq (4 \, h / 3) / (3 a / 4)$ (see Figure~\ref{figure}). For $3 / 4 \leq a < 1$, that gives $\sin \theta_a \leq (64 / 27) \, h$. 
It follows that there is $C > 0$ ($C = 64 \pi / 54$ works) such that $|\theta| \leq C \, h$ when $z = r \, \e^{i \theta} \in \Delta (a, 1 / 2)$. 
\end{proof}
\begin{proof} [Proof of Proposition~\ref{proposition Blaschke}] 
We restrict ourselves, for the time, to $3/4 \leq a < 1$.

Note that, since $3/ 4 \leq a < 1$ and $a = 1 - h$, we have $0 < h \leq 1 / 4$.
\smallskip

$\bullet$ Let $z = r \, \e^{i \theta} \in \Delta (a, 1 / 2)$. 

We write

\begin{displaymath}
|\phi_a (z) | = \frac{|z - a |}{\dis a \, \Big| z - \frac{1}{a} \Big|} \geq \frac{|z - a |}{\dis a \, \Big[ |z - a| + \Big(\frac{1}{a} - a \Big)\Big]} 
= \frac{1}{\dis a + \frac{1 - a^2}{|z - a |}} \, ;
\end{displaymath}
so, if $z \in \Delta (a, 1 / 2)$, and $z = r \, \e^{i \theta}$, we have $| \theta | < \pi / 2$; hence, when $\theta \neq 0$:
\begin{displaymath}
|\phi_a (z) | \geq \frac{1}{\dis a + \frac{1 - a^2}{a \, | \sin \theta | }} \, \cdot
\end{displaymath}
It follows from Lemma~\ref{essential lemma 2} that, for another constant $C$:
\begin{displaymath}
\frac{1}{I_a (\theta)} \leq a + \frac{1 - a^2}{a \, |\sin \theta|} \leq C \, \frac{h}{|\theta|} \, \raise 1 pt \hbox{,} 
\end{displaymath}
so
\begin{displaymath}
G_a (\theta) \leq \log \bigg(C \, \frac{h}{| \theta | } \bigg) 
\end{displaymath}
and 
\begin{displaymath}
\int_0^{C h} G_a (\theta) \, d\theta \leq  \bigg[ \theta \log \bigg(C \, \frac{h}{\theta} \bigg) + \theta \bigg]_0^{Ch} = C h \, .
\end{displaymath}

Setting, for $|\theta| \leq \pi$ (recall that $a = 1 - h$):
\begin{equation}
f_a (\theta) = \log \bigg(C \, \frac{h}{| \theta | } \bigg) \, \ind_{[- C h, C h]} (\theta) \, ,
\end{equation}
we hence have:
\begin{equation} \label{majo 1}
\qquad \log \bigg( \frac{1}{|\phi_a (z)|} \bigg) \leq f_a (\theta) \quad \text{ for } z = r \, \e^{i \theta} \in \Delta (a, 1 / 2) 
\end{equation}
(since then $|\theta| \leq C h$). Moreover, we have 

\begin{equation}
\| f_a \|_1 \leq 2 \, C h \, .
\end{equation}
\smallskip\goodbreak

$\bullet$ Now, let $z \in \D \setminus \Delta (a, 1 / 2)$. 
\smallskip

Let $D_a$ be the (Euclidean) disk of diameter $[c, 1 / c]$, where $c$ is the point of the segment $\partial \Delta (a, 1 / 2) \cap [\,0, 1)$ such that 
$0 < c < a$, and 
\begin{equation} 
A_a = \partial \D \cap D_a \, . 
\end{equation} 
We write simply $A_a = A$ thereafter. 
\smallskip

We set
\begin{equation}
w_a = 2 \log 2 \, \ind_A \quad \text{and} \quad u_a = {\rm P} [w_a] \, ,
\end{equation}
the Poisson integral of $w_a$.  

We have, for some  positive constant $C$:
\begin{equation}
\| w_a \|_1 = 2 \log 2 \, m (A) \leq C h \, .
\end{equation}
In fact, the diameter of $D_a$ is $\frac{1}{c} - c$ and $\frac{1}{2} = |\phi_a (c)| = \frac{a - c}{1 - a c}\,$, so
\begin{displaymath}
c = \frac{2 \, a - 1}{2 - a} = \frac{1 - 2 h}{1 + h} = 1 - 3 h + {\rm o}\, (h) \, ,
\end{displaymath}
and the diameter of $D_a$ is equal to $6 h + {\rm o}\, (h)$.

\begin{figure}[ht]
\centering
\includegraphics[width=10cm]{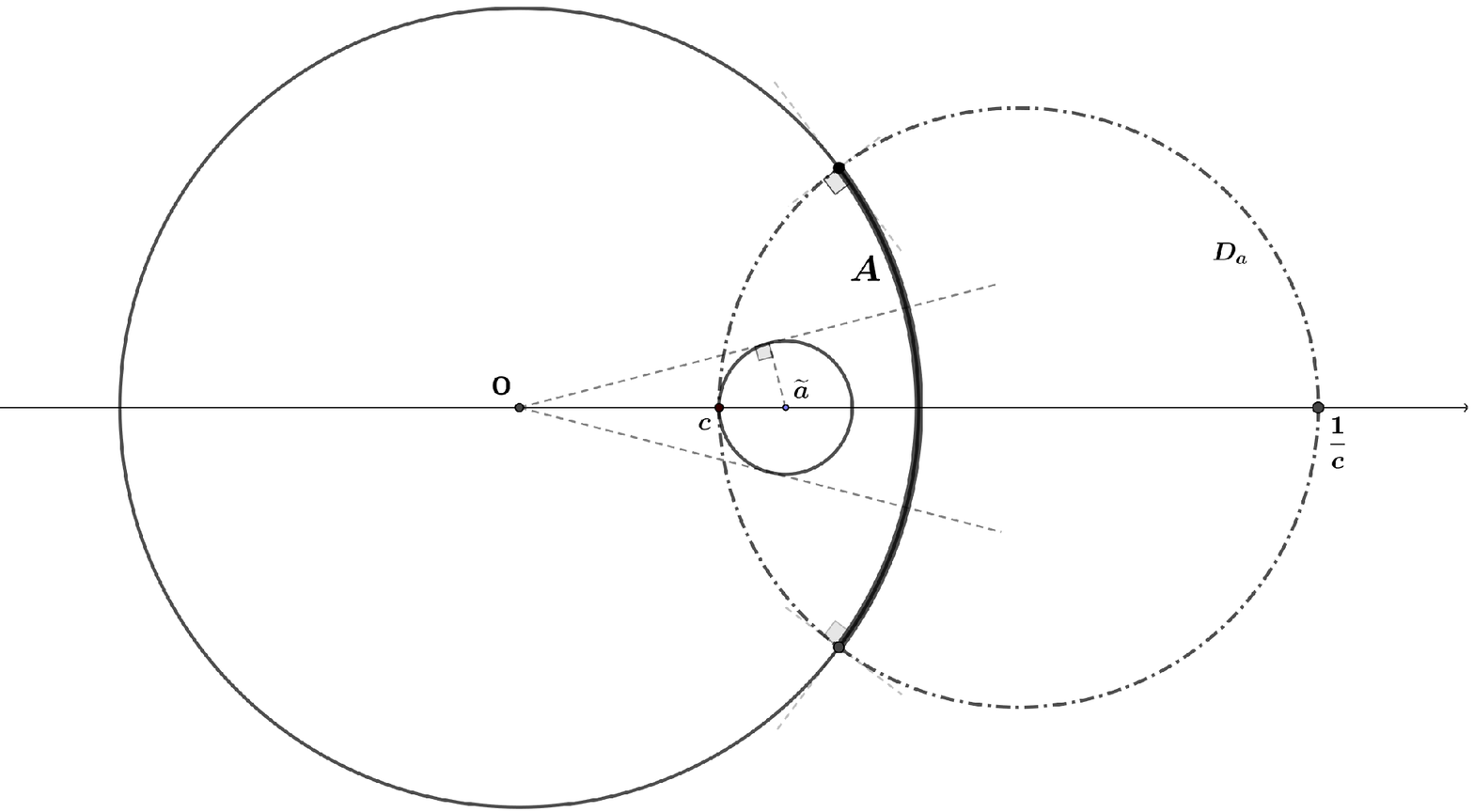}
\caption {\it circles} \label{figure}
\end{figure}
\begin{lemma} \label{lemma Poisson}
Let $0 < \theta_0 < \pi /2$ and $A$ be the arc of $\partial \D$ with end points $\e^{- i \theta_0}$ and $\e^{i \theta_0}$ and midpoint $1$. Then, if 
$D_A$ is the disk orthogonal to $\partial \D$ passing through $\e^{- i \theta_0}$ and $\e^{i \theta_0}$, we have 
\begin{displaymath} 
\qquad {\rm P} [\ind_A] \geq 1 /2 \quad \text{on} \quad \D \cap D_A \, .
\end{displaymath} 
\end{lemma}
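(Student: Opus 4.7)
The plan is to reduce to a model situation in the upper half-plane via a M\"obius transformation and then use the geometric description of the level sets of the harmonic measure of an interval; the key observation is that $\partial D_A \cap \D$ is a hyperbolic geodesic in $\D$, which can be straightened to a semicircle. Concretely, fix a M\"obius transformation $\psi \colon \D \to \mathbb{H}$ (the upper half-plane) sending $\e^{- i \theta_0}$ to $- 1$ and $\e^{i \theta_0}$ to $+ 1$; the midpoint $1$ of $A$ then necessarily goes to $0$. Since $\psi$ preserves orthogonality of circles, $\partial D_A$ (orthogonal to $\partial \D$ at $\e^{\pm i \theta_0}$ and distinct from $\partial \D$) is sent to the unique circle other than $\R$ orthogonal to $\R$ at $\pm 1$, namely the unit circle $\{|w| = 1\}$. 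Tracking the image of $1$ shows that $\psi$ carries $\D \cap D_A$ onto the upper half-disk $\{w \in \mathbb{H} \tq |w| < 1\}$.

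By conformal invariance of harmonic measure, for every $z \in \D$,
\begin{displaymath}
{\rm P}[\ind_A] (z) = \omega (z, A, \D) = \omega \big( \psi (z), [- 1, 1], \mathbb{H} \big) = \frac{1}{\pi} \, \angle \big(- 1, \psi (z), 1 \big) \, ,
\end{displaymath}
the last equality being the classical formula expressing the harmonic measure of a real interval at $w \in \mathbb{H}$ as $1/\pi$ times the angle subtended at $w$. By the inscribed-angle (Thales) theorem, this subtended angle equals $\pi / 2$ on the upper unit semicircle, exceeds $\pi / 2$ strictly inside the upper half-disk, and is less than $\pi / 2$ outside. Since $\psi (\D \cap D_A)$ is exactly the upper half-disk, we conclude ${\rm P}[\ind_A] (z) \geq 1 / 2$ on $\D \cap D_A$.

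I expect no serious obstacle here: the only delicate point is the orientation bookkeeping needed to guarantee that $\D \cap D_A$ maps onto the upper half-disk rather than onto its complement in $\mathbb{H}$, but this is settled by checking where $1$ goes. The remainder is standard conformal invariance combined with the elementary inscribed-angle fact.
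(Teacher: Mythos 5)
Your proof is correct and is essentially the same as the paper's: both transplant the problem to a half-plane by a M\"obius map, use the explicit formula for the harmonic measure of a boundary interval, and identify the level set $\{\omega = 1/2\}$ with the image of $\partial D_A$ via preservation of orthogonality (the paper sends $A$ to $\R_-\cup\{\infty\}$ and reads off $\frac{1}{\pi}\arg$ directly, while you send $A$ to $[-1,1]$ and invoke the inscribed-angle theorem --- a cosmetic difference). One small imprecision: prescribing only $\e^{\pm i\theta_0} \mapsto \mp 1$ does not force $1 \mapsto 0$; you should simply define $\psi$ by the three conditions $\e^{-i\theta_0}\mapsto -1$, $1 \mapsto 0$, $\e^{i\theta_0}\mapsto 1$ and check the orientation gives $\psi(\D)=\mathbb{H}$.
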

\begin{proof}
Let $T \colon \overline{\D} \to \{z \in \C \tq \Im z \geq 0 \} \cup \{\infty\}$ be the conformal map mapping $A$ onto $\R_- \cup \{\infty\}$ and 
$\partial \D \setminus A$ onto $\R_+^\ast$.  The unique bounded solution to the Dirichlet problem with data $1$ on $\R_- \cup \{\infty\}$ and $0$ on 
$\R_+^\ast$ is $U (\zeta) = \frac{1}{\pi} \, \arg \zeta$; hence ${\rm P} [\ind_A] (z) = \frac{1}{\pi} \, \arg (T z)$. 

We have $U (\zeta) \geq 1 / 2$ if and only if $\zeta$ is in the closed left-hand side upper quadrant $Q$. But $T^{- 1} (i \R)$ is the arc orthogonal to $\partial \D$ 
and passing through $\e^{- i \theta_0}$ and $\e^{i \theta_0}$; hence $T^{- 1} (i \R) = A$ and,  since $D_A$ is orthogonal to $\D$, we have 
$T \big( \overline{\D \cap D_A} \big) =  Q \cup \{\infty\}$. Therefore ${\rm P} [\ind_A] (z) \geq 1 / 2$ when $z \in \D \cap D_A$.
\end{proof}

By this lemma, we have
\begin{displaymath} 
\qquad u_a \geq \log 2 \quad \text{on } \D \cap D_a \, .
\end{displaymath} 
In particular, since $\Delta (a, 1 / 2) \subseteq \D \cap D_a$, we have $u_a \geq \log 2$ on $\partial \Delta (a, 1 / 2)$. Of course $u_a$ is equal to $\ind_A$ 
on $\partial \D$, so is positive on $\partial \D$.

On the other hand, the function $\log ( 1 / |\phi_a|)$ is harmonic in $\D \setminus \Delta (a, 1 / 2)$ and is equal to $0$ on $\partial \D$ and to 
$\log 2$ on $\partial \Delta (a, 1 / 2)$. Therefore, since $\partial \D \cup \partial \Delta (a, 1 / 2)$ is the boundary of $\D \setminus \Delta (a, 1 / 2)$, we 
obtain that
\begin{equation} \label{majo 2}
\qquad \quad u_a (z) \geq \log \big( 1 / |\phi_a (z)| \big) \quad \text{for } z \in \D \setminus \Delta (a, 1 / 2) \, .
\end{equation} 

$\bullet$ It follows from \eqref{majo 1} and \eqref{majo 2} that
\begin{equation} \label{majo}
\qquad \log \big( 1 / |\phi_a (z) | \big) \leq f_a (\arg z) + u_a (z) \quad \text{ for all } z \in \D \, .
\end{equation}

$\bullet$ We are now able to finish the proof.
\smallskip

We write 
\begin{displaymath} 
B_0 = \prod_{n = 1}^\infty \frac{|a_n|}{a_n} \, \phi_{a_n} 
\end{displaymath} 
with $\sum_{n = 1}^\infty (1 - |a_n|) < \infty$ and $|a_n | \geq 3 / 4$. We have, by \eqref{majo}:
\begin{displaymath}
\log \big( 1 / |B_0 (z)| \big) \leq \sum_{n = 1}^\infty f_{|a_n|} \big( \arg (\bar{a}_n z ) \big) 
+ \sum_{n = 1}^\infty u_{|a_n|} ( z  \, \e^{- i \arg a_n}) \, ,
\end{displaymath}
that is
\begin{equation} 
\log \big( 1 / |B_0 (z)| \big) \leq f (\arg z) + u (z) \, ,
\end{equation} 
with 
\begin{displaymath}
f (\theta) = \sum_{n = 1}^\infty f_{|a_n|} \big( \arg ( \bar{a}_n \,\e^{i \theta}) \big) 
\end{displaymath}
and $u = {\rm P} [w]$, where
\begin{displaymath}
w = 2 \log 2 \sum_{n = 1}^\infty \ind_{\e^{i \arg a_n} A_{|a_n|}} \, .
\end{displaymath}
We have $f , w \in L^1 (- \pi, \pi)$, by invariance of the Lebesgue measure and since, we have  $\| f_{|a_n|} \|_1 \leq C \, (1 - |a_n|)$ and 
$\| w_{|a_n|} \|_1 \leq C \, (1 - |a_n|)$, and $\sum_{n = 1}^\infty (1 - |a_n|) < \infty$. That finishes the proof of 
Proposition~\ref{proposition Blaschke}. 
\end{proof}

Now, it is easy to end the proof of Theorem~\ref{theo with inf} and hence that of Theorem~\ref{theo HS suff-ness}.

\begin{proof} [End of the proof of Theorem~\ref{theo with inf}] 
We only have to write
\begin{displaymath} 
B = \bigg( \prod_{|a_n| < 3 / 4} \frac{|a_n |}{a_n} \, \phi_{a_n} \bigg) \times B_0 
\end{displaymath} 
where $B_0$ is the Blaschke product made with the zeros of $B$ of modulus $\geq 3 / 4$ (as usual $|a_n| / a_n = 1$ if $a_n  =0$). 

Then, with the notation of Lemma~\ref{essential lemma}, if 
\begin{displaymath} 
G = \sum_{|a_n| < 3 / 4} G_{a_n} \, ,
\end{displaymath} 
we have, by Proposition~\ref{proposition Blaschke}, if $|z| \geq 1/2$:
\begin{displaymath} 
\log \big( 1 / |B (z) |\big) \leq G (\arg z) + f (\arg z) + u (z) \, , 
\end{displaymath} 
with $f \in L^1 (- \pi, \pi)$ and $u = {\rm P} [w]$ with $w \in L^1 (- \pi, \pi)$. 

Since the maximal radial function of $u = {\rm P} [w]$ is smaller than its Hardy-Littlewood maximal function $M_w$ (actually equivalent: see 
\cite[Theorem~11.20 and Exercise 19]{Rudin RCA}), by a well-known theorem of Hardy and Littlewood, we get:
\begin{equation}
\sup_{1 / 2 \leq r < 1} \log \frac{1}{| B (r \, \e^{i \theta}) |} \leq G (\theta) + f (\theta) + M_w (\theta) \, . 
\end{equation}
Now, $G \in L^1 (- \pi, \pi)$, by Lemma~\ref{essential lemma}, and $M_w \in L^{1, \infty} (- \pi, \pi)$, by the Kolmogorov theorem; therefore 
$\log ( 1 / |B|) \in L^{(1, \infty)}  (- \pi, \pi)$,  and that finishes the proof of Theorem~\ref{theo with inf}.
\end{proof}

\noindent{\bf Remark.} The proofs of Theorem~\ref{theo HS suff-ness}, $2)$ and Theorem~\ref{theo with inf} show that if the weighted Bergman space 
${\mathfrak B}_U^{\, 2}$ of analytic functions $f$ such that $\int_\D |f|^2 \, U \, dA < \infty$, contains a function $v \in H^\infty$, with 
$v (0) = 1$, then $\log (1 / I_U) \in L^{1, \infty} (0, 2 \pi)$.
\medskip

\goodbreak

The result of Theorem~\ref{theo with inf} is essentially sharp, as said by the following result.

\begin{theorem} 
There exists $v \in H^\infty$, $v \not\equiv 0$, such that $\log (1 / I_v) \notin L^1 (0, 2 \pi)$. 
\end{theorem}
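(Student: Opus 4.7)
My plan is to exhibit the atomic singular inner function
\[
v(z) = \exp\!\Big({-\,\frac{1+z}{1-z}}\Big),
\]
which is not identically zero and satisfies $\|v\|_\infty = 1$, and show by a direct computation that $\log(1/I_v(\theta)) \gtrsim 1/|\theta|$ near $\theta = 0$, so that $\log(1/I_v) \notin L^1(0, 2\pi)$.

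First, I would reduce the problem to maximizing a Poisson kernel. Since $|v(z)| = \exp(-P_1(z))$ where $P_1(re^{i\theta}) = (1-r^2)/(1-2r\cos\theta + r^2)$ is the Poisson kernel at the point $\zeta = 1$, we have
\[
\log\frac{1}{I_v(\theta)} \;=\; \sup_{1/2 \leq r < 1} P_1(re^{i\theta}).
\]

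Second, for fixed small $\theta \neq 0$, I would compute the inner supremum explicitly by differentiating the rational function $r \mapsto P_1(re^{i\theta})$ in $r$. Setting the numerator of the derivative to zero leads to the quadratic
\[
r^2 \cos\theta - 2r + \cos\theta = 0,
\]
whose admissible root is
\[
r_\ast = \frac{1 - |\sin\theta|}{\cos\theta} = 1 - |\theta| + O(\theta^2).
\]
Using the identity $\cos\theta = 2r_\ast/(1 + r_\ast^2)$ at the critical point, the denominator of $P_1$ simplifies to $(1-r_\ast^2)^2/(1+r_\ast^2)$, giving the clean formula
\[
P_1(r_\ast e^{i\theta}) \;=\; \frac{1+r_\ast^2}{1-r_\ast^2} \;\asymp\; \frac{1}{|\theta|} \quad \text{as } \theta \to 0.
\]

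Third, I would check that $r_\ast \in [1/2, 1)$ whenever $|\theta|$ is sufficiently small (immediate from $r_\ast = 1 - |\theta| + O(\theta^2)$), so the critical point lies in the admissible range and therefore
\[
\log\frac{1}{I_v(\theta)} \;\geq\; \frac{1+r_\ast^2}{1-r_\ast^2} \;\gtrsim\; \frac{1}{|\theta|}
\]
on a neighborhood of $0$. Integrating, $\int_{-\delta}^{\delta} \log(1/I_v(\theta))\, d\theta = +\infty$, which proves the theorem. I do not foresee any serious obstacle; the only routine verification is that $r_\ast$ falls in the interval $[1/2, 1)$ for the relevant range of~$\theta$, which is immediate from the expansion above.
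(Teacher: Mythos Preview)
Your proof is correct and takes a genuinely different, more elementary route than the paper. The paper constructs an \emph{outer} function: it starts from an $L^1$ density $\sigma(\theta) = 1/[\theta(\log\theta)^2]$ on $(0,1/2]$, sets $u = {\rm P}[\sigma]$, and takes $v = \exp(-u - i\tilde u)$; a Poisson-kernel estimate at $\rho = 1-\theta$ then gives $\log(1/I_v(\theta)) \gtrsim 1/\big(\theta\log(1/\theta)\big)$, which just barely fails to be integrable. You instead use the atomic singular inner function, i.e.\ $v = \exp(-{\rm P}[\delta_1] - i\widetilde{{\rm P}[\delta_1]})$, and obtain the cruder but stronger bound $\log(1/I_v(\theta)) \gtrsim 1/|\theta|$ by an explicit maximization of the Poisson kernel in $r$. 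Your argument is shorter and entirely self-contained; the paper's construction, on the other hand, shows that the $L^1$ failure can occur even for outer functions (no singular measure needed) and can be made arbitrarily close to the $L^{1,\infty}$ threshold of Theorem~\ref{theo with inf}, which is a sharper sharpness statement than the theorem literally asks for.
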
 
\begin{proof}
We start with
\begin{displaymath} 
\sigma (\theta) = \left\{ 
\begin{array}{cl}
1 / [ \theta (\log \theta)^2 ] \, , & 0 < \theta \leq 1 /2 \, , \\
0 & \text{elsewhere.} 
\end{array}
\right.
\end{displaymath} 
We have $\sigma \in L^1 (0, 2 \pi)$, and we consider $u = {\rm P} [\sigma]$. Then $u$ is positive and, since the Poisson kernel is positive and decreasing 
on $[0, \pi]$, we have, for $0 < \theta \leq 1 / 2$:
\begin{align*}
u ( \rho \, \e^{i \theta}) 
& = \frac{1}{2 \pi} \int_0^{2 \pi} {\rm P}_\rho (\theta - t) \, \sigma (t) \, dt 
\geq  \frac{1}{2 \pi} \int_0^\theta {\rm P}_\rho (\theta - t) \, \sigma (t) \, dt \\
& \geq  \frac{1}{2 \pi} \int_0^\theta {\rm P}_\rho (\theta) \, \sigma (t) \, dt 
= \frac{1}{2 \pi} \, {\rm P}_\rho (\theta) \, \frac{1}{\log (1 / \theta)} \, \cdot
\end{align*}
Taking $\rho = 1 - \theta$, we have, as $\theta$ goes to $0$:
\begin{displaymath} 
\frac{1 - \rho^2}{1 - 2 \rho \, \cos \theta + \rho^2} = \frac{1 - \rho^2}{(1 - \rho)^2 + 2 \rho (1 - \cos \theta)} 
\sim \frac{\theta (2 - \theta)}{\theta^2 + 2 (1 - \theta) \, \theta^2 / 2} \sim \frac{1}{\theta} \, \cdot
\end{displaymath} 
Hence:
\begin{displaymath} 
u \big( (1 - \theta) \, \e^{i \theta} \big) \geq \frac{C}{\theta} \, \frac{1}{\log (1 / \theta)} \, \cdot 
\end{displaymath} 
Therefore
\begin{displaymath} 
\sup_{1 / 2 \leq \rho < 1} u (\rho \, \e^{i \theta}) \geq \frac{C}{\theta \log (1 / \theta)} \, \cdot
\end{displaymath} 

Let now $g = u + i \tilde u$ and $v = \exp (- g)$. We have $|v | = \e^{- u}$ and hence 
\begin{displaymath} 
I_v (\theta) \leq \exp \bigg( - \frac{C}{\theta \log (1 / \theta) } \bigg) 
\end{displaymath} 
and 
\begin{displaymath} 
\log (1 / I_v) \geq \frac{C}{\theta \log (1 / \theta) } \, \cdot
\end{displaymath} 
Therefore $\log (1 / I_v)  \notin L^1 (0, 2 \pi)$. 
\end{proof}
%
%\smallskip

%%%%%%%%%%%%%%%%%%%%%%%%%%%%%
\subsection{An example}

The fact that, in Theorem~\ref{theo HS suff-ness}, the function $U$ has the particular form given in \eqref{def U}, in particular is logarithmically-subharmonic, 
is important. In fact, we have the following result.

\begin{theorem} \label{theo continuous function}
There exist a continuous function $U \colon \D \to \C$, with $U \geq 1$ and an analytic function $w \colon \D \to \C$, $w \not\equiv 0$, such that:
\begin{displaymath} 
\int_\D |w|^2 \, U \, dA < \infty \, ,
\end{displaymath} 
but
\begin{displaymath} 
\qquad \qquad \int_0^1 U (r \, \e^{i \theta}) \, dr = \infty \, , \quad \text{for almost all } \theta \,. 
\end{displaymath} 
\end{theorem}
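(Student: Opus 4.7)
The plan is to build $U$ as a continuous function with tall narrow bumps supported on a family of pairwise disjoint Euclidean disks $D_{k,j}$ centered at a dyadic-like grid of points $a_{k,j}$ in $\D$, and to construct $w$ as an analytic function vanishing simply at each $a_{k,j}$ with a controlled local derivative there. The smallness of $|w|^2$ near each zero will absorb the largeness of $U$ on the bump; meanwhile, the dyadic structure of the grid combined with the ergodicity of the doubling map will force $\int_0^1 U(r\,\e^{i\theta})\,dr = +\infty$ for a.e. $\theta$.

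Concretely, set $a_{k,j} = (1-2^{-k})\,\e^{2\pi i j/2^k}$ for $k\geq 1$ and $0\leq j < 2^k$, pick $c>0$ small enough that the disks $D_{k,j}=D(a_{k,j},c\,2^{-k})$ are pairwise disjoint and contained in $\D$, and let $\chi_{k,j}$ be a continuous cutoff equal to $1$ on $D(a_{k,j},c\,2^{-k}/2)$ and supported in $D_{k,j}$. Define
\begin{displaymath}
U(z) = 1 + \sum_{k\geq 1}\sum_{j=0}^{2^k-1} 2^k\,\chi_{k,j}(z)\,,
\end{displaymath}
which is continuous on $\D$ by disjointness of the $D_{k,j}$ and satisfies $U\geq 1$.

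The ray of angle $\theta$ meets $D_{k,j}$ precisely when the fractional part $\{2^k\theta/(2\pi)\}$ lies in a fixed set $A\subset[0,1)$ of positive Lebesgue measure $2c'$ (for some $c'>0$ depending on $c$). Since the doubling map $T\colon x\mapsto 2x\mod 1$ on $[0,1)$ is ergodic for Lebesgue measure, the Birkhoff ergodic theorem applied to $\ind_A$ yields $\frac{1}{N}\sum_{k=0}^{N-1}\ind_A(T^k x) \to 2c'$ for almost every $x$; hence for a.e. $\theta$ the orbit $\bigl(T^k(\theta/(2\pi))\bigr)_{k\geq 1}$ visits $A$ infinitely often. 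Each such visit at level $k$ contributes at least $2^k\cdot c_0\,2^{-k}=c_0>0$ to $\int_0^1 U(r\,\e^{i\theta})\,dr$, so this integral is $+\infty$ for a.e. $\theta\in\T$.

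The main technical obstacle is the construction of the analytic function $w$: one needs $w\not\equiv 0$ with a simple zero at each $a_{k,j}$ and a local bound $|w(z)|\leq C_k\,|z-a_{k,j}|$ on $D_{k,j}$ with $C_k=O(2^k/k)$, together with $w\in\mathfrak{B}^2$. A natural candidate is a Weierstrass-type infinite product $w(z)=\prod_{k,j} E_{N_k}(z/a_{k,j})$ with elementary factors $E_N(u)=(1-u)\exp(u+u^2/2+\cdots+u^N/N)$ of order $N_k$ growing fast enough (say $N_k=k^2$) to guarantee uniform convergence on compact subsets of $\D$; existence of such a function in $\mathfrak{B}^2$ is consistent with the known Horowitz-type characterization of Bergman zero sets, since $\sum_k 2^k\cdot 2^{-k(1+\epsilon)} < \infty$ for every $\epsilon>0$. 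The delicate point is to bound the ``regular factor'' $\prod_{(k',j')\neq(k,j)}E_{N_{k'}}(a_{k,j}/a_{k',j'})$ and its derivative at $a_{k,j}$ to produce $C_k=O(2^k/k)$. Assuming this estimate, on each $D_{k,j}$:
\begin{displaymath}
\int_{D_{k,j}}|w|^2\,dA \,\leq\, C_k^2\int_{D(0,c\,2^{-k})}|\zeta|^2\,dA(\zeta) \,\lesssim\, \frac{2^{-2k}}{k^2}\,\raise 1pt\hbox{,}
\end{displaymath}
so
\begin{displaymath}
\sum_{k,j} 2^k\int_{D_{k,j}}|w|^2\,dA \,\lesssim\, \sum_{k\geq 1} 2^k\cdot 2^k\cdot \frac{2^{-2k}}{k^2} \,=\, \sum_{k\geq 1}\frac{1}{k^2}\,<\,\infty\,,
\end{displaymath}
and combined with $\int_\D|w|^2\,dA<\infty$ this yields $\int_\D|w|^2\,U\,dA<\infty$, completing the construction.
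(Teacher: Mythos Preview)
Your construction of $U$ is fine, and the ergodic argument for the divergence of the radial integrals is correct. The genuine gap is the construction of $w$. You need a nonzero $w\in\mathfrak{B}^2$ with a simple zero at every grid point $a_{k,j}$ \emph{and} the quantitative local bound $|w(z)|\le C_k\,|z-a_{k,j}|$ on $D_{k,j}$ with $C_k=O(2^k/k)$; you explicitly write ``Assuming this estimate'' and never return to it. First, it is not clear that the dyadic grid $\{a_{k,j}\}$ is a $\mathfrak{B}^2$-zero set at all: the condition $\sum_k 2^k\cdot 2^{-k(1+\epsilon)}<\infty$ is only \emph{necessary}, not sufficient, and Horowitz-type sufficient conditions require a finer density analysis that you do not carry out. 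Second, even granting existence, the derivative control $|w'(a_{k,j})|\lesssim 2^k/k$ is the heart of the matter, and nothing in a Weierstrass product with elementary factors $E_{N_k}$ gives this for free; the ``regular factor'' you mention is an infinite product over points that accumulate densely near $a_{k,j}$, and estimating it uniformly in $(k,j)$ with the needed decay is a substantial problem, not a bookkeeping detail.

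The paper avoids this difficulty entirely by reversing the order of construction: it first produces $w$ via the Kahane--Katznelson theorem as $w=\exp(-F)$ with $\lim_{r\to 1}w(r\e^{i\theta})=0$ for a.e.\ $\theta$ (and $w\in\mathfrak{B}^2$ by growth control on $F$), then uses Egorov to find, for each $n$, a closed set $A_n\subset\T$ with $m(\T\setminus A_n)<2^{-n}$ and a radius $\rho_n$ beyond which $|w|^2\le 2^{-n}$ over $A_n$. The function $U$ is then assembled by Urysohn from bumps of height $2/(1-\rho_n)$ supported on thin annular shells over $A_n$; Borel--Cantelli gives the a.e.\ divergence of $\int_0^1 U(r\e^{i\theta})\,dr$. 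In short: building $w$ first and tailoring $U$ to the (unspecified) places where $|w|$ is small is easy; building $U$ first and then asking $w$ to be small at prescribed places with prescribed rates is hard, and that is exactly the step your argument leaves open.
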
 

To prove this, we use the following weak form of a result of Kahane and Katznelson \cite{KK}.

\begin{theorem} [Kahane-Katznelson]
Given any positive increasing function $\omega \colon (0, 1) \to (0 , \infty)$ such that $\omega (r) \converge_{r \to 1} \infty$ and any pair of 
measurable functions $g, h \colon [0, 2 \pi] \to \overline \R = [- \infty, + \infty]$, there exists an analytic function $F \colon \D \to \C$ such that
\begin{itemize}  
\setlength\itemsep{-0.1 em}

\item [$1)$] $\max_{|z| = r} |F (z)| = {\rm o}\, \big( \omega ( r ) \big)$ as $r$ goes to $1$;

\item [$2)$] $\lim_{r \to 1} \Re F (r \, \e^{i \theta}) = g (\theta)$ and $\lim_{r \to 1} \Im F (r \, \e^{i \theta}) = h (\theta)$, for almost all 
$\theta \in [0, 2 \pi]$. 
\end{itemize}
\end{theorem}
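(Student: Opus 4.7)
The aim is to construct an analytic $F$ on $\D$ with non-tangential boundary limits $g + ih$ almost everywhere and $\max_{|z|=r}|F(z)| = o(\omega(r))$. The central difficulty is that an analytic function on $\D$ is determined (up to an imaginary constant) by its real boundary values alone, so one cannot independently prescribe both $\Re F$ and $\Im F$ within bounded analytic functions; the unbounded growth permitted by $\omega$ is exactly what makes independent prescription possible. I would construct $F$ as a series $F = \sum_n Q_n$ of analytic polynomials whose boundary values converge almost everywhere to $g + ih$ while the partial sums remain below $\omega(r)$ in sup-norm on $\{|z|=r\}$.

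\emph{Step 1 (reduction to bounded case).} For a measurable set $A \subseteq \T$ of positive measure, an analytic function with non-tangential limit $+\infty$ a.e.\ on $A$ and $0$ a.e.\ off $A$ can be built as $\Phi_A(z) = \sum_k c_k / (1 - \bar a_k z)$ for a sequence $(a_k)$ on $\T$ accumulating non-tangentially exactly on $A$ with suitable positive weights $c_k$. Subtracting such summands handles $\{g = \pm\infty\}$ and $\{h = \pm\infty\}$. Splitting the remaining finite-valued $g,h$ as $g = \sum_k g^{(k)}$ with $g^{(k)}$ bounded and supported on $\{k-1 \leq |g| < k\}$ (similarly for $h$) and summing the analytic pieces with rapidly decaying weights reduces to bounded $g,h$.

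\emph{Step 2 (building block).} The technical heart is the following: for any $\phi \in L^\infty(\T, \C)$ and any $\eps > 0$, there exists an analytic polynomial $P$ and a set $E_\eps \subset \T$ with $m(E_\eps) < \eps$ such that $|P(\e^{i\theta}) - \phi(\e^{i\theta})| < \eps$ off $E_\eps$, with quantitative control of $\deg P$ and $\|P\|_\infty$. To prove it, apply Luzin to obtain a continuous $\tilde\phi$ agreeing with $\phi$ off a set of measure $< \eps/3$; apply Jackson approximation to obtain a trigonometric polynomial $T$ with $\|T - \tilde\phi\|_\infty < \eps/3$; decompose $T = T_+ + T_-$ into analytic and anti-analytic parts; and construct an analytic polynomial $R$ whose boundary values cancel $T_-$ off an additional set of measure $< \eps/3$. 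The existence of such a correcting $R$ is the delicate point: it may be obtained via Cauchy integrals of measures concentrated near the exceptional set, or by a quantitative Rudin--Carleson type construction adapted to sets of small positive measure.

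\emph{Step 3 (assembly and growth).} Apply the building block with $\eps_n = 2^{-n}$ to produce polynomials $P_n$ with $|P_n - (g + ih)| < 2^{-n}$ off $E_n$ with $m(E_n) < 2^{-n}$. Set $Q_1 = P_1$ and $Q_n = P_n - P_{n-1}$ for $n \geq 2$. By Borel--Cantelli, $\sum_n Q_n(\e^{i\theta})$ converges absolutely for almost every $\theta$ to $g + ih$; the passage to \emph{non-tangential} convergence inside $\D$ uses that each $Q_n$ is a polynomial (hence continuous on $\overline \D$) together with uniform smallness of the tail $\sum_{n > N} Q_n$ near each good boundary point, for appropriate choice of degrees. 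To enforce $\max_{|z|=r}|F(z)| = o(\omega(r))$, pick radii $r_n \uparrow 1$ with $\omega(r_n) \geq n \sum_{k \leq n} \|Q_k\|_\infty$, which is always feasible since the $Q_k$ are fixed polynomials and $\omega \to \infty$ is arbitrary. The \textbf{main obstacle} is the building block in Step~2, since naive harmonic (Poisson) extension cannot prescribe both real and imaginary boundary parts simultaneously in bounded form; the $\eps$-exceptional-set formulation is precisely what rescues the independent prescription of $g$ and $h$.
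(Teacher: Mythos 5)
A preliminary remark: the paper does not prove this statement at all --- it is quoted as a known theorem from the reference \cite{KK} of Kahane and Katznelson and then applied --- so there is no in-paper proof to compare yours against. Judged on its own, your outline has the right overall architecture for results of this type (a series of analytic polynomials, a Menshov-type correction lemma, Borel--Cantelli, and high-order vanishing at the origin to keep the partial sums below $\omega$), but it contains two genuine gaps.

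The decisive one is Step 2. The ``building block'' --- approximating an arbitrary bounded measurable $\phi$ on $\T$ by the boundary values of an analytic polynomial off a set of measure $\eps$, with quantitative control of $\|P\|_\infty$ --- is not a technical lemma; it \emph{is} the theorem, and everything else in your outline is bookkeeping. Your sketch reduces it, via Luzin and Jackson, to cancelling the anti-analytic part $T_-$, i.e.\ to the same statement for the single function $\bar z$; but that special case is exactly where the whole difficulty of the Menshov correction phenomenon is concentrated, and neither of the techniques you name delivers it: Rudin--Carleson interpolation concerns closed sets of \emph{measure zero} and gives no control for approximation off a set of small positive measure, while ``Cauchy integrals of measures concentrated near the exceptional set'' is not developed to a checkable argument. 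Second, Step 1 fails as written. A function $\Phi_A(z)=\sum_k c_k/(1-\bar a_k z)$ with $c_k>0$ and $a_k\in\T$ has positive real part, hence is the Poisson--Herglotz transform of a positive measure and, by Fatou's theorem, has \emph{finite} radial limits almost everywhere; it therefore cannot have radial limit $+\infty$ on a set of positive Lebesgue measure, which is precisely what is needed when $g\equiv+\infty$ --- and that is the very case the paper uses, to make $w=\exp(-F)$ tend to $0$ a.e. Likewise, summing the truncated pieces $g^{(k)}$ ``with rapidly decaying weights'' multiplies their boundary values by those weights and no longer reproduces $g$. Both reductions must instead be absorbed into the iterative exceptional-set scheme of Step 3 applied to truncations, which again presupposes the unproved building block.
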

\begin{proof} [Proof of Theorem~\ref{theo continuous function}]
The Kahane-Katznelson theorem shows that there exists a function $w = \exp (- F)$ belonging to ${\mathfrak B}^{\, 2}$, and even in 
$\bigcap_{\beta > - 1} {\mathfrak B}_\beta^{\, 2}$, if we want, taking, for instance, $\omega (r) = \sqrt {\log \big( 1 / ( 1 - r)\big)}$, 
such that $\lim_{r \to 1} w (r \, \e^{i \theta}) = 0$ for almost every $\theta \in [0, 2 \pi]$. We may also assume that $w (0) = 1$. 
\smallskip

By the Egorov theorem, we get, for all $n \geq 1$, numbers $\rho_n  \in (1 - \frac{1}{n} \, \raise 0,5 pt \hbox{,} \, 1)$ and measurable sets 
$A_n \subseteq \T$ such that

\begin{itemize}  
\setlength\itemsep{-0.1 em}

\item [1)] $|w (r \, \e^{i \theta}) |^2 \leq 2^{- n}$ for $\e^{i \theta} \in A_n$ and $\rho_n \leq r < 1$;

\item [2)] $m (\T \setminus A_n) < 2^{- n}$. 
\end{itemize}
By the regularity of the measure, we can assume that the sets $A_n$ are closed.

Let $\rho'_n$ and $\rho''_n$ such that $\rho_n < \rho'_n < \rho''_n < \frac{1 + \rho_n}{2}$ and $\rho''_n - \rho'_n \geq (1 - \rho_n) / 3$, and 
\begin{displaymath}
E_n = \{ r\, \e^{i \theta} \tq \e^{i \theta} \in A_n \, , \ \rho'_n \leq r \leq \rho''_n \} \, .
\end{displaymath}
By the continuity of $w$, there is an open neighborhood $G_n$ of $E_n$, with closure contained in $\D$, such that 
$\rho_n < |z| < (1 + \rho_n) / 2$  and $| w ( z) |^2 < 2^{- n + 1}$ for $z \in G_n$.
We have
\begin{displaymath}
\int_{G_n} \frac{2}{1 - \rho_n} \, |w (z)|^2 \, d A (z) 
\leq \frac{1}{\pi} \int_0^{2 \pi} \bigg( \int_{\rho_n}^{ \frac{1 + \rho_n}{2}} \frac{2}{1 - \rho_n} \, 2^{- n + 1} \, dr \bigg) \, d\theta 
\leq 2^{ - n + 2} \, .
\end{displaymath}
The Urysohn lemma gives a continuous function $U_n \colon \D \to \big[ 0, \frac{2}{1 - \rho_n} \big]$ such that:
\begin{itemize}  
\setlength\itemsep{-0.1 em}

\item [a)] $U_n (z) = 0$ if $z \notin G_n$;

\item [b)] $U_n (z) = 2 / (1 - \rho_n)$ for $z \in E_n$;

\item [c)] $\dis \int_\D |w|^2 \, U_n \, dA \leq 2^{ - n + 2}$.
\end{itemize}

Then 
\begin{displaymath}
U = 1 + \sum_{n  = 1}^\infty U_n
\end{displaymath}
is continuous on $\D$, because the sum is locally finite, since $U_n = 0$ out of $G_n$. We have, by c), and since $w \in {\mathfrak B}^{\, 2}$:  
\begin{displaymath}
\int_\D |w|^2 \, U \, d A = \int_\D |w|^2 \, dA + \sum_{n = 1}^\infty \int_\D |w|^2 \, U_n \, dA < \infty \, .
\end{displaymath}
Moreover, since $\sum_{n = 1}^\infty m (\T \setminus A_n) < \infty$, for almost all $\theta$, there exists $N (\theta) \geq 1$ such that 
$\e^{i \theta} \in A_n$ for all $n \geq N (\theta)$. Hence, for these $\theta$:
\begin{align*}
\int_0^1 U ( r \, \e^{i \theta}) \, dr 
& \geq \sum_{n = N (\theta)}^\infty \int_{\rho'_n}^{\rho''_n} U_n ( r \, \e^{i \theta}) \, dr 
\geq \sum_{n = N (\theta)}^\infty \int_{\rho'_n}^{\rho''_n} \frac{2}{1 - \rho_n} \, dr \\
& =  \sum_{n = N (\theta)}^\infty (\rho''_n - \rho'_n) \, \frac{2}{1 - \rho_n} \geq \sum_{n = N (\theta)}^\infty \frac{2}{3} = \infty \, .
\end{align*}
That finishes the proof of Theorem~\ref{theo continuous function}.
\end{proof}
%

%%%%%%%%%%%%%%%%%%%%%%%%%%%%%%%%%%%%%%%%%%%%%%%%%%%%%%%%%%%%%%%%%%%%%%%%%%%
\goodbreak\bigskip

\noindent{\bf Acknowledgement.}\! L. Rodr{\'\i}guez-Piazza is partially supported by the projects MTM2015-63699-P and PGC2018-094215-B-I00 
(Spanish Ministerio de Ciencia, Innovaci\'on y Universidades, and FEDER funds). 
 
Parts of this paper were made when he visited the Universit\'e d'Artois in Lens and the Universit\'e de Lille in January 2019 and in January 2020. It is his pleasure 
to thank all his colleagues in these universities for their warm welcome.

This work is also partially supported by the grant ANR-17-CE40-0021 of the French National Research Agency ANR (project Front).

\goodbreak
%%%%%%%%%%%%%%%%%%%%%%%%%%%%%%%%%%%%%%%%%%%%%%%%%%%%%%%%%%%%%%%%%%%%%%%%%%%%%%

 %%%%%%%%%%%%%%%%%%%%%%%%%%%%%%%%%%%%%%%%%%%%%%%%%%%%%%%%%%
\smallskip\goodbreak

{\footnotesize
Pascal Lef\`evre \\
Univ. Artois, UR~2462, Laboratoire de Math\'ematiques de Lens (LML), F-62\kern 1mm 300 LENS, FRANCE \\
pascal.lefevre@univ-artois.fr
\smallskip

Daniel Li \\ 
Univ. Artois, UR~2462, Laboratoire de Math\'ematiques de Lens (LML), F-62\kern 1mm 300 LENS, FRANCE \\
daniel.li@univ-artois.fr
\smallskip

Herv\'e Queff\'elec \\
Univ. Lille Nord de France, USTL,  
Laboratoire Paul Painlev\'e U.M.R. CNRS 8524, F-59\kern 1mm 655 VILLENEUVE D'ASCQ Cedex, FRANCE \\
Herve.Queffelec@univ-lille.fr
\smallskip
 
Luis Rodr{\'\i}guez-Piazza \\
Universidad de Sevilla, Facultad de Matem\'aticas, Departamento de An\'alisis Matem\'atico \& IMUS,  
Calle Tarfia s/n  
41\kern 1mm 012 SEVILLA, SPAIN \\
piazza@us.es
}

\end{document}